\newcommand{\N}{\mathbb{N}}                   
\newcommand{\R}{\mathbb{R}}                   
\newcommand{\C}{\mathbb{C}}                   
\newcommand{\K}{\mathbb{K}}                   
\newcommand{\bU}{\overline{U}}
\newcommand{\bV}{\overline{V}}
\newcommand{\bS}{\overline{S}}
\newcommand{\tSigma}{\widetilde{\Sigma}}
\newcommand{\fbd}{\mathrm{fbd}}               
\newcommand{\vp}{\varphi}                     
\newcommand{\OXxi}{{\mathcal O}_{X,\xi}}      
\newcommand{\Cx}{\C\!\left<x\right>}          
\newcommand{\dl}{\mathfrak{d}}                
\newcommand{\Dl}{\mathfrak{D}}                
\newcommand{\OO}{\mathcal{O}}                 
\numberwithin{equation}{section}
\journalname{Mathematische Annalen}
\begin{document}

\title{Tameness of holomorphic closure dimension\\ in a semialgebraic set
 \thanks{J.Adamus's research was partially supported by Natural Sciences and Engineering Research Council of Canada.}}
\titlerunning{Tameness of holomorphic closure dimension in a semialgebraic set}

\author{Janusz Adamus         \and
        Serge Randriambololona
}

\institute{J. Adamus \at
              Department of Mathematics, The University of Western Ontario, London, Ontario N6A 5B7 Canada - and -
              Institute of Mathematics, Faculty of Mathematics and Computer Science,
              Jagiellonian University, ul. {\L}ojasiewicza 6, 30-348 Krak{\'o}w, Poland \\
              Tel.: +1-519-661-3638\\
              Fax: +1-519-661-3610\\
              \email{jadamus@uwo.ca}
           \and
           S. Randriambololona \at
              Department of Mathematics, The University of Western Ontario, London, Ontario N6A 5B7 Canada
}

\date{Received: date / Accepted: date}

\maketitle

\begin{abstract}
Given a semianalytic set $S$ in $\C^n$ and a point $p\in\bS$, there is a unique smallest complex-analytic germ $X_p$ which contains $S_p$, called the holomorphic closure of $S_p$. We show that if $S$ is semialgebraic then $X_p$ is a Nash germ, for every $p$, and $S$ admits a semialgebraic filtration by the holomorphic closure dimension. As a consequence, every semialgebraic subset of a complex vector space admits a semialgebraic stratification into CR manifolds.
\keywords{Holomorphic closure \and Semialgebraic sets \and CR manifolds}
\subclass{14P10 \and 32C07 \and 32V40 \and 32S45}
\end{abstract}


\section{Introduction}
\label{sec:intro}

Given a real-analytic (or, more generally, semianalytic) subset $S$ of an open set in a complex vector space, a natural question arises how much of the ambient complex structure is inherited (locally) by $S$. In the present paper, we are interested in the following local biholomorphic invariant of $S$: Let $\xi $ be a point of $S$.
We shall consider the minimal dimension of a complex-analytic germ at $\xi$ containing the germ $S_\xi$ - the so-called \emph{holomorphic closure dimension} of $S_\xi$, denoted $\dim_{HC}S_\xi$ (see~\cite{AS}). The minimal (with respect to inclusion) complex-analytic germ $X_\xi$ containing $S_\xi$ is called the \emph{holomorphic closure} of $S_\xi$. For $d\in\N$, let $\mathcal{S}^d(S)$ denote the set of points $\xi\in S$ for which $\dim_{HC}S_\xi\geq d$. We will investigate the structure of the sets $\mathcal{S}^d(S)$, and how they are modified by holomorphic mappings of the ambient space.

The study of this outer complex dimension of real-analytic sets originates in \cite{S}. It is well motivated in classical CR geometry. The holomorphic closure dimension of a real-analytic CR manifold is complementary to its CR dimension, and its properties can be used to show that a real-analytic manifold is a CR manifold outside a nowhere-dense semianalytic subset (Proposition~1.4 and Theorem~1.5 of \cite{AS}). However, when considering the sets $\mathcal{S}^d(S)$, there is no gain in assuming that $S$ is non-singular, and so we shall consider the general, singular case.

On the other hand, our personal bias is to study the real-analytic and semianalytic sets for themselves. It seems natural to expect the sets $\mathcal{S}^d(S)$ to remain close to the class of $S$. By comparison, in \cite{ARS}, we studied the sets $\mathcal{A}^d(S)$ of those $\xi\in S$ for which $S_\xi$ contains a complex analytic germ of dimension at least $d$. Theorem~1.1 of \cite{ARS} asserts that the $\mathcal{A}^d(S)$ are semianalytic (not necessarily real-analytic though). Alas, things do not look so good for the holomorphic closure dimension. In fact, Example~6.3 of \cite{AS} shows that, for $d$ greater than the generic holomorphic closure dimension of $S$, the sets $\mathcal{S}^d(S)$ need not even be subanalytic!
Hence, in order to hope for some sort of tameness, the next largest class to consider is that of semialgebraic sets, which is the class we consider here.
For that reason, when considering how a holomorphic mapping modifies the complex structure inherited by our set $S$, we also need to restrict to mappings which preserve the class of $S$. Thus our general objects of study are semialgebraic subsets of complex vector spaces and holomorphic semialgebraic mappings between (open subsets of) such spaces. As it turns out, all such maps are Nash (Proposition~\ref{prop:nash-map}).
\medskip

The first part of this article is concerned with tameness of the holomorphic closure dimension in semialgebraic subsets of complex vector spaces.
In Section~\ref{sec:nash-image}, we study the images of Nash sets by Nash mappings, and prove a local variant of Chevalley's theorem on images of algebraic mappings (Theorem~\ref{thm:nash-map-image}). Although the image of a Nash set by a Nash mapping need not be Nash (not even Nash-constructible, Remark~\ref{rem:no-local-strong-Chevalley}), its holomorphic closure is already Nash, and of the same dimension as the image itself.

The key consequence of Theorem~\ref{thm:nash-map-image} is the following result, which lies at the heart of all our tameness arguments.

\begin{proposition}
\label{prop:HC-nash}
The holomorphic closure of a semialgebraic set $S$ at a point $\xi\in\bS$ is a Nash germ.
\end{proposition}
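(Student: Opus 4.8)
The plan is to realize the germ $S_\xi$, up to holomorphic closure, as the germ at $\xi$ of the image of a Nash set under a Nash map, and then to invoke Theorem~\ref{thm:nash-map-image}. First I would reduce to a compact situation. Since a holomorphic function germ vanishes on $S_\xi$ exactly when it vanishes on $(\bS)_\xi$, the ideal cutting out the holomorphic closure is unchanged upon replacing $S$ by $\bS$; moreover, replacing $S$ by $S\cap\overline{B}$ for a small closed ball $\overline{B}$ centred at $\xi$ does not affect the germ at $\xi$. Hence I may assume that $S$ is a compact semialgebraic subset of $\C^n\cong\R^{2n}$ with $\xi\in S$, so that in particular $X_\xi$ is the holomorphic closure of this compact germ.

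The key classical input is that every compact semialgebraic set is the image of a compact real algebraic set under a polynomial map. Writing the compact set $S$ as a finite union of basic closed semialgebraic sets $\{p_{j,1}\geq 0,\dots,p_{j,r_j}\geq 0\}$ and, for each inequality $p\geq 0$, introducing an auxiliary variable $y$ subject to $y^2=p$ (the relevant $y$ being bounded by compactness), and then combining the pieces at distinct heights, one obtains a compact real algebraic set $W\subseteq\R^N$ together with a polynomial map $g\colon\R^N\to\R^{2n}$ such that $g(W)=S$. By Proposition~\ref{prop:nash-map} (or simply because polynomial maps are Nash), $g$ is a Nash map, and $W$, being algebraic, is a Nash set.

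Since $W$ is compact, $g$ is proper, and this passage from the global equality $g(W)=S$ to a germ statement at $\xi$ to which the local Theorem~\ref{thm:nash-map-image} applies is the step I expect to be the main obstacle. Concretely, the compact fibre $g^{-1}(\xi)$ can be covered by finitely many open sets $U_1,\dots,U_k$, and properness guarantees that for all sufficiently small $\epsilon$ one has $W\cap g^{-1}(B(\xi,\epsilon))\subseteq U_1\cup\dots\cup U_k$; otherwise a sequence in $W$ with images tending to $\xi$ but staying outside $\bigcup_l U_l$ would, by compactness, accumulate at a point of $g^{-1}(\xi)\setminus\bigcup_l U_l=\emptyset$. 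Consequently the germ $S_\xi$ equals the union of the germs at $\xi$ of the images $g(W\cap U_l)$, each of which is the image of a Nash set under a Nash map. Applying Theorem~\ref{thm:nash-map-image} to each pair $(g|_{U_l},\,W\cap U_l)$ then shows that the holomorphic closure of each $g(W\cap U_l)_\xi$ is a Nash germ. (If Theorem~\ref{thm:nash-map-image} is already formulated so as to absorb properness, one may skip the compactification and apply it directly to a global presentation $S=\pi(Z)$ with $Z$ algebraic and $\pi$ a linear projection; the compact realization merely makes the local application transparent.)

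Finally I would assemble these pieces. Because the ideal of $S_\xi$ is the intersection of the ideals of the finitely many germs $g(W\cap U_l)_\xi$, the holomorphic closure of $S_\xi$ is the union of their holomorphic closures, hence a finite union of Nash germs, and therefore itself a Nash germ. As this holomorphic closure is precisely $X_\xi$, the proposition follows. The only genuinely delicate point is the localization in the third paragraph; everything else is a matter of invoking Theorem~\ref{thm:nash-map-image} and the elementary facts that finite unions of Nash germs are Nash and that $V(I\cap J)=V(I)\cup V(J)$.
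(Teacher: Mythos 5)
There is a genuine gap, and it is not the localization step you flag as delicate (that part is fine), but a type mismatch at the heart of the argument. Theorem~\ref{thm:nash-map-image} is a statement about \emph{complex} Nash data: $X$ is a Nash subset of an open subset of a $\C$-vector space (in particular a $\C$-analytic set), and $\vp$ is a \emph{holomorphic} map with algebraic graph. Your $W$ is a real algebraic subset of $\R^N$ and $g$ is a real polynomial map $\R^N\to\R^{2n}$; neither is an instance of the hypotheses. A real algebraic set of positive dimension is in general not a $\C$-analytic set in any ambient $\C^N$ (already $\R\subset\C$ fails), so ``$W$, being algebraic, is a Nash set'' is false in the sense of Nash used in this paper, and $g$ is not holomorphic. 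Consequently Theorem~\ref{thm:nash-map-image} says nothing about the holomorphic closure of the germs $g(W\cap U_l)_\xi$, and the same objection applies to your fallback of writing $S=\pi(Z)$ with $Z$ real algebraic. The realization of a compact semialgebraic set as a polynomial image of a compact real algebraic set is correct but does not feed into the theorem you want to invoke.

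The missing idea is precisely the complexification device of \S~\ref{subsec:complexification}: one embeds $\C^n$ totally really into $\C^{2n}$ via $\dl(\zeta)=(\zeta,\bar\zeta)$, replaces each real-algebraic piece $R$ of $S$ adherent to $\xi$ by a $\C$-algebraic set $A\subset\C^{2n}$ representing the complexification of $R_\xi$, and identifies $\overline{R_\xi}^{HC}$ with the smallest $\C$-analytic germ containing $\pi^z(A)$ at $\pi^z(\dl(\xi))$. Only then is Theorem~\ref{thm:nash-map-image} applicable, to the genuinely complex pair consisting of the Nash set $A$ (an analytic-irreducible component of an algebraic set intersected with an open set) and the holomorphic projection $\pi^z$. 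Two further points your sketch does not engage with, and which the paper needs: Cartan's theorem matching the irreducible components of $R_\xi$ with those of the complexification, and the observation that a non-empty open subgerm of an irreducible $\R$-analytic germ has the same holomorphic closure as the whole germ (this is what reduces $\overline{S_\xi}^{HC}$ to a union of the $\overline{R^{j,k}_\xi}^{HC}$, playing the role your $V(I\cap J)=V(I)\cup V(J)$ step was meant to play). Your reduction to the compact case and the covering argument for $g^{-1}(\xi)$ are unobjectionable but end up unnecessary once the argument is routed through the complexification.
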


We prove this proposition in Section~\ref{sec:HC-stratification}. The main result of Section~\ref{sec:HC-stratification} is the proof of the semialgebraic stratification by holomorphic closure dimension:

\begin{theorem}
\label{thm:HC-stratification}
Let $S$ be a semialgebraic subset of a finite-dimensional complex vector space $M$. Then the sets $\mathcal{S}^d(S)$ are semialgebraic and closed in $S$, for all $d\in\N$.
\end{theorem}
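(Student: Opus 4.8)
The plan is to establish Theorem~\ref{thm:HC-stratification} by combining the pointwise structural result of Proposition~\ref{prop:HC-nash} with a relative (family) version of the holomorphic closure construction, and then invoking the fundamental finiteness and definability properties of the semialgebraic category. Recall that $\mathcal{S}^d(S)=\{\xi\in S:\dim_{HC}S_\xi\geq d\}$, and that by Proposition~\ref{prop:HC-nash} each germ $X_\xi$ (the holomorphic closure of $S_\xi$) is a Nash germ, whose dimension is $\dim_{HC}S_\xi$. Since $\mathcal{S}^{d}(S)=\mathcal{S}^{d-1}(S)\setminus(\mathcal{S}^{d-1}(S)\setminus\mathcal{S}^{d}(S))$ and the sets are nested ($\mathcal{S}^{d+1}(S)\subseteq\mathcal{S}^{d}(S)$), it suffices to prove that each $\mathcal{S}^d(S)$ is semialgebraic; closedness in $S$ should then follow from upper semicontinuity of the holomorphic closure dimension, which I would verify separately.

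First I would set up a uniform, parametrized description of the holomorphic closure. The Nash germ $X_\xi$ is cut out by finitely many Nash function germs, and the heart of the matter is that, by the Chevalley-type Theorem~\ref{thm:nash-map-image} underlying Proposition~\ref{prop:HC-nash}, these generators can be taken to depend on $\xi$ in a semialgebraic fashion: one realizes $X_\xi$ as (a component of) the holomorphic closure of the image of a fixed Nash set under a Nash map, and the construction in Section~\ref{sec:nash-image} produces data that varies semialgebraically with the base point. Thus I would aim to produce a semialgebraic family $\{(\xi,z):z\in X_\xi\}$ over $S$, whose fibre dimension at $\xi$ equals $\dim_{HC}S_\xi$.

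With such a family in hand, the condition $\dim_{HC}S_\xi\geq d$ becomes a condition on the fibre dimension of a semialgebraic family, and the set of base points over which a semialgebraic family has fibre dimension at least $d$ is itself semialgebraic — this is a standard consequence of the definability of dimension in the semialgebraic category together with quantifier elimination (the Tarski--Seidenberg theorem). Concretely, "$\dim X_\xi\geq d$" is expressed by a first-order formula over the reals asserting the existence of a $d$-dimensional piece of the fibre (for instance, via the nonvanishing of an appropriate Jacobian minor at some smooth point, or via the standard characterization of dimension by projections onto coordinate subspaces), and eliminating the quantifier over $z$ yields a semialgebraic description of $\mathcal{S}^d(S)$.

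The main obstacle I anticipate is the uniformity step: passing from the pointwise statement "$X_\xi$ is Nash" to a genuinely semialgebraic \emph{family} $\{(\xi,z):z\in X_\xi\}$. Proposition~\ref{prop:HC-nash} is a germ-level result, and a priori the Nash generators of $X_\xi$, their number, and their radii of convergence could behave wildly as $\xi$ moves; one must show that the Chevalley-type construction of Theorem~\ref{thm:nash-map-image} can be carried out with semialgebraic dependence on parameters, controlling these data uniformly on semialgebraic pieces of $S$. I expect this to require stratifying $S$ and applying the image theorem fibrewise, so that the holomorphic closure is assembled from finitely many semialgebraic families. Once uniformity is secured, the reduction to fibre-dimension of semialgebraic families and the appeal to Tarski--Seidenberg are routine. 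For closedness, I would argue that $\dim_{HC}S_\xi$ is upper semicontinuous along $S$ (a limit of germs containing larger and larger complex structure cannot have smaller holomorphic closure dimension), so that each superlevel set $\mathcal{S}^d(S)$ is closed in $S$.
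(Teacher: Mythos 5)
Your treatment of closedness is essentially the paper's: a $\C$-analytic representative $Y$ of $\overline{S_\xi}^{HC}$ contains $S$ in a full neighbourhood $U$ of $\xi$, hence contains $\overline{S_x}^{HC}$ for all $x\in U$, and $\dim Y_x\leq\dim Y_\xi$ after shrinking $U$; this gives upper semicontinuity of $\dim_{HC}S_x$ and closedness of the superlevel sets. That part is fine, modulo writing it out.

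For semialgebraicity, however, there is a genuine gap, and it sits exactly where you flag it: the passage from the pointwise statement ``$\overline{S_\xi}^{HC}$ is a Nash germ'' to a semialgebraic \emph{family} $\{(\xi,z):z\in X_\xi\}$ with fibres the holomorphic closures. Proposition~\ref{prop:HC-nash} and Theorem~\ref{thm:nash-map-image} are germ-level and give no control on how the Nash generators, their number, or the size of the neighbourhoods on which they define $X_\xi$ vary with $\xi$; nothing in Section~\ref{sec:nash-image} produces parametrized data, and you do not supply an argument that it can be made to. Declaring that ``once uniformity is secured'' the rest is Tarski--Seidenberg leaves the entire substance of the theorem unproved. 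The paper circumvents the need for any such family by a different mechanism. It partitions $S$ into finitely many connected smooth semialgebraic pieces $S_\iota$ (Remark~\ref{rem:SAG-facts}(3)) and shows (Lemma~\ref{lem:alg-closure} together with Proposition~\ref{prop:HC-closure-smooth}) that for each $\iota$ there is a single \emph{irreducible} $\C$-algebraic set $X_\iota\supset\overline{S_\iota}$ such that at every $\xi\in\overline{S_\iota}$ the holomorphic closure $\overline{(S_\iota)_\xi}^{HC}$ is a union of analytic-irreducible components of $(X_\iota)_\xi$; irreducibility of $X_\iota$ forces all these components to have dimension $\dim X_\iota$, so $\dim_{HC}(S_\iota)_x$ is \emph{constant}, equal to $\dim X_\iota$. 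Consequently $\dim_{HC}S_x=\max\{\dim X_\iota:\ x\in\overline{S_\iota}\}$ depends only on the finite combinatorial datum $I(x)=\{\iota:x\in\overline{S_\iota}\}$, and each set $\{x\in S:I(x)=I'\}$ is semialgebraic because closures of semialgebraic sets are semialgebraic. No parametrized version of the Nash closure, and no quantifier elimination over the fibre variable, is needed. To repair your proof you would either have to carry out the uniformization you postpone (which is substantially harder than anything in the paper's toolkit) or, more profitably, replace it with the constancy-on-strata argument just described.
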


\begin{remark}
\label{rem:no-analytic-HC-tameness}
An analogous semianalytic filtration does not exist in general for $S$ semianalytic. Indeed, Example~6.3 of \cite{AS} shows a connected non-singular $\R$-analytic set $R\subset\C^5$ of generic holomorphic closure dimension 3, and with the set $\mathcal{S}^4(R)$ non-empty and not semianalytic. In fact, $\mathcal{S}^4(R)$ is not even subanalytic.
\end{remark}

Section~\ref{sec:preimages} gives a series of results concerning the relationship between the holomorphic closure dimension of a semialgebraic set and that of its preimage under a holomorphic semialgebraic mapping. In the last section, we present applications of these results to CR geometry. Our main application is the following semialgebraic stratification by CR manifolds:

\begin{theorem}
\label{thm:semialg-CR-stratification}
Let $S$ be a semialgebraic subset of a finite-dimensional complex vector space $M$. Then there exists a finite partition $\{S_\iota\}_{\iota\in I}$ of $S$ into semialgebraic subsets of $M$ satisfying the following conditions:
\begin{itemize}
\item[(i)] Every $S_\iota$ is a CR manifold.
\item[(ii)] Compatibility with the family $\{\mathcal{S}^d(S)\}_{d\in\N}$: for each $\iota\in I$ there exists $d\in\N$ such that $S_\iota\subset\mathcal{S}^d(S)\setminus\mathcal{S}^{d+1}(S)$.
\item[(iii)] Condition of the frontier: For any $j,k\in I$, either $S_j\cap\overline{S_k}=\varnothing$ or else $S_j\subset\overline{S_k}$, $\dim_\R S_j<\dim_\R S_k$ and $\dim_{HC}(S_j)_\xi\leq\dim_{HC}(S_k)_\xi$ for every $\xi\in S_j$.
\end{itemize}
\end{theorem}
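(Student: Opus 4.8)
The plan is to build the partition from the holomorphic closure filtration of Theorem~\ref{thm:HC-stratification}, refine it to a semialgebraic stratification into Nash pieces, split each piece along the locus where its CR dimension jumps, and finally deduce the holomorphic-closure inequality in (iii) from a soft monotonicity property. For \emph{Step 1 (the holomorphic closure filtration)}, Theorem~\ref{thm:HC-stratification} gives that the $\mathcal{S}^d(S)$ are semialgebraic and closed in $S$, forming a decreasing chain $S=\mathcal{S}^0(S)\supseteq\mathcal{S}^1(S)\supseteq\cdots$ which stabilizes at $\varnothing$ after finitely many steps. Hence the level sets $T_d:=\mathcal{S}^d(S)\setminus\mathcal{S}^{d+1}(S)$ are semialgebraic, partition $S$, and satisfy $\dim_{HC}S_\xi=d$ on $T_d$. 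Any partition refining $\{T_d\}_d$ then automatically satisfies (ii), each piece lying in a single $T_d$.

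For \emph{Step 2 (stratification into Nash CR manifolds)}, I would first invoke the standard semialgebraic stratification theory: $S$ admits a finite stratification into connected Nash manifolds, compatible with the finite family $\{\mathcal{S}^d(S)\}_d$, satisfying the condition of the frontier, with $\dim_\R$ strictly dropping along frontier inclusions. This already yields (ii) and the first two clauses of (iii). To upgrade the strata to CR manifolds I would induct on $\dim_\R S$. On a connected Nash manifold $N\subseteq\C^n$ the function $\xi\mapsto\dim_\C(T_\xi N\cap J\,T_\xi N)$, where $J$ is the complex structure, is semialgebraic (it is read off the rank of a Nash matrix built from a Nash frame of $TN$ and $J$) and upper semicontinuous; thus its generic value is minimal, and the CR-singular locus $B_N=\{\xi:\dim_\C(T_\xi N\cap J\,T_\xi N)>c_{\min}\}$ is a closed, nowhere-dense semialgebraic subset of $N$ (compare Proposition~1.4 and Theorem~1.5 of \cite{AS}). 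Hence $N\setminus B_N$ is a CR manifold, while the union of the $B_N$ with the lower strata is a semialgebraic set of strictly smaller dimension, to which the inductive hypothesis applies. Reassembling, and re-running the compatible stratification theorem with the resulting CR level sets added to the family, produces a finite semialgebraic partition $\{S_\iota\}_{\iota\in I}$ into CR manifolds, still compatible with $\{\mathcal{S}^d(S)\}_d$ and still satisfying the frontier condition with decreasing real dimension; this gives (i), (ii), and the first two assertions of (iii).

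For \emph{Step 3 (monotonicity of the holomorphic closure)}, it remains to prove that $S_j\subset\overline{S_k}$ forces $\dim_{HC}(S_j)_\xi\leq\dim_{HC}(S_k)_\xi$ for every $\xi\in S_j$. The inclusion $S_j\subset\overline{S_k}$ gives the germ inclusion $(S_j)_\xi\subseteq(\overline{S_k})_\xi=\overline{(S_k)_\xi}$, since any $y\in S_j$ near $\xi$ is a limit of points of $S_k$ which, being eventually as close to $\xi$ as $y$ is, lie in an arbitrarily small neighbourhood of $\xi$. Let $X_\xi$ be the holomorphic closure of $(S_k)_\xi$. As a complex-analytic germ $X_\xi$ is closed, so $X_\xi\supseteq\overline{(S_k)_\xi}\supseteq(S_j)_\xi$; being complex-analytic, $X_\xi$ therefore also contains the holomorphic closure of $(S_j)_\xi$, the smallest complex-analytic germ through $(S_j)_\xi$. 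Hence $\dim_{HC}(S_j)_\xi\leq\dim X_\xi=\dim_{HC}(S_k)_\xi$, completing (iii).

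The delicate point is \emph{Step 2}: producing the CR refinement while simultaneously preserving the condition of the frontier and compatibility with the holomorphic closure filtration. The loci $B_N$ depend on the tangent bundles of the strata already chosen and so cannot be prescribed in advance; the induction on $\dim_\R S$ is what keeps the process finite and lets the frontier condition be re-established at each stage through the standard compatible stratification theorem.
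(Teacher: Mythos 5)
Your proposal is correct, and Steps 1--2 follow essentially the paper's route: partition by the level sets of the filtration from Theorem~\ref{thm:HC-stratification}, decompose into connected semialgebraic manifolds, remove the CR-singular locus (which the paper handles via the same Grassmannian minor computation in Proposition~\ref{prop:semialg-smooth-CR-mnfld}, quoting \cite{AS} to identify the generic CR dimension as $d-h$, whereas you argue directly from upper semicontinuity of $\xi\mapsto\dim_\C(T_\xi N\cap JT_\xi N)$ on a connected stratum), and induct on $\dim_\R S$; the need to re-refine for the frontier condition and then restore the CR property on the new lower-dimensional strata is treated with the same level of informality in the paper (``by further stratifying, if needed''), so you are not worse off there --- if anything you flag the iteration more explicitly. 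Where you genuinely diverge is Step~3: the paper derives $\dim_{HC}(S_j)_\xi\le\dim_{HC}(S_k)_\xi$ from Corollary~\ref{cor:CR-mnfld}, producing an irreducible $\C$-algebraic set $X_k\supset\overline{S_k}$ with $\dim X_k=h_k$ and using constancy of the holomorphic closure dimension on each stratum, while you observe that the holomorphic closure of $(S_k)_\xi$ is a closed $\C$-analytic germ, hence contains $(\overline{S_k})_\xi\supseteq(S_j)_\xi$ and therefore contains $\overline{(S_j)_\xi}^{HC}$. Your argument is softer and more general --- it needs neither the algebraic witness $X_k$ nor constancy of $\dim_{HC}$ on the strata, and it gives the pointwise inequality for arbitrary sets with $S_j\subset\overline{S_k}$ --- whereas the paper's route buys the stronger global statement that a single irreducible algebraic set of dimension $h_k$ contains $\overline{S_k}$, which is used elsewhere but is not needed for clause (iii).
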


This result, again, is in contrast with the real-analytic case (see Remark~\ref{rem:no-analytic-CR-tameness}).
Finally, we investigate how the CR structure varies under some holomorphic mappings. Particularly, we study holomorphic semialgebraic desingularizations.

To make the article easily accessible for both real-algebraic and CR geometry communities, we review the basic notions and tools in the next section.


\section{Preliminaries}
\label{sec:preliminaries}

Throughout this article, the dimension of a set $X$ is understood in the following sense. If $X$ is a subset of a $\K$-manifold $M$ ($\K=\R$ or $\C$), then
\[
\dim_{\K} X =\max\{\dim_{\K}N :\ N\subset X	,\ N\mathrm{\ a\ closed\ submanifold\ of\ an\ open\ subset\ of\ }M\}\,.
\]
The dimension of the germ $X_p$ of a set $X$ at a point $p\in M$ is then defined as
\[
\dim_{\K} X_p = \min\{\dim_{\K}(X\cap U) :\ U\mathrm{\ an\ open\ neighbourhood\ of\ } p \mathrm{\ in\ }M\}\,.
\]

Because real- and complex-analytic and algebraic objects are often considered next to one another in our arguments, we will always call them $\R$-analytic (resp. $\R$-algebraic) or $\C$-analytic (resp. $\C$-algebraic), to avoid confusion. When speaking of dimension of a $\K$-analytic (or $\K$-algebraic) set, we always mean its $\K$-dimension in the above sense, unless otherwise specified.

\subsection{Semialgebraic sets}
\label{subsec:semialg}

For a concise introduction to semialgebraic geometry, we refer the reader to \cite[Ch.\,2]{BCR} and \cite{Cos}. 

Let $M$ be a finite-dimensional $\R$-vector space. A choice of base for $M$ gives a linear isomorphism $\psi:\R^n\to M$, where $n=\dim{M}$. We say that a function $f:M\to\R$ is a \emph{polynomial function} on $M$ if there exists $P\in\R[X_1,\dots,X_n]$ such that $(f\circ\psi)(x)=P(x)$ for all $x=(x_1,\dots,x_n)\in\R^n$. Since linear base change is a polynomial mapping (with polynomial inverse), it follows that the above definition is independent of the choice of base for $M$. We say that a subset $S$ of $M$ is \emph{semialgebraic} if $S$ is a finite union of sets of the form
\[
\{x\in M: f_1(x)=\dots=f_r(x)=0,g_1(x)>0,\dots,g_s(x)>0 \}\,,
\]
where $r,s\in\N$ and $f_1,\dots,f_r,g_1,\dots,g_s$ are polynomial functions on $M$.
One easily checks that the union and intersection of two semialgebraic sets are semialgebraic, as is the complement of a semialgebraic set.

Let $\Omega$ and $\Delta$ be open subsets of finite-dimensional $\R$-vector spaces $M$ and $N$ respectively. A mapping $\vp:\Omega\to\Delta$ is called a \emph{semialgebraic mapping} if its graph is a semialgebraic subset of $M\times N$.
The Tarski-Seidenberg Theorem (see, e.g., \cite[Prop.\,2.2.7]{BCR}) insures that the image (resp. the inverse image) by $\vp$ of a semialgebraic subset of $M$ (resp. $N$) is semialgebraic in $N$ (resp. $M$).

\begin{remark}
\label{rem:SAG-facts}
{~}
\begin{enumerate}
\label{rem:SA-facts}

\item \cite[Prop.\,2.2.2]{BCR}. If $S$ is semialgebraic in $M$, then the topological closure and interior of $S$ in $M$ are semialgebraic sets.

\item \cite[Thm.\,2.4.4]{BCR}. Every semialgebraic subset of $M$ has a finite number of connected components, each of which is semialgebraic in $M$.

\item \cite[Thm.\,2.9.10]{BCR}. Every semialgebraic subset of $M$ is a disjoint union of a finite family of sets, 
each of which is a connected $\R$-analytic manifold and a semialgebraic subset of $M$.
\end{enumerate}
\end{remark}

A semialgebraic subset of a $\C$-vector space $M$ is one that is semialgebraic in $M$ regarded as an $\R$-vector space. If $\Omega$ and $\Delta$ are open subsets of finite-dimensional $\C$-vector spaces $M$ and $N$ respectively, then a \emph{holomorphic semialgebraic} mapping $\vp:\Omega\to\Delta$ is a holomorphic map whose graph is a semialgebraic subset of $M\times N$ in the above sense.

\subsection{Nash sets and Nash mappings}
\label{subsec:nash}

See \cite{Tw} for a detailed exposition of (complex) Nash sets and Nash mappings.

Let $M$ be a finite-dimensional $\C$-vector space. Let $\Omega$ be an open subset of $M$, and let $f$ be a holomorphic function on $\Omega$. 
We say that $f$ is a \emph{Nash function} at $x_0\in\Omega$ if there exist an open neighbourhood $U$ of $x_0$ in $\Omega$ and 
a $\C$-polynomial function $P:M\times\C\to\C$, $P\neq0$, such that $P(x,f(x))=0$ for $x\in U$. 
A holomorphic function on $\Omega$ is a Nash function if it is a Nash function at every point of $\Omega$. 
Let $N$ be another finite-dimensional $\C$-vector space. A holomorphic mapping $\vp:\Omega\to N$ is a \emph{Nash mapping} 
if each of its components is a Nash function on $\Omega$ with respect to some basis of $N$.

A subset $X$ of $\Omega$ is called a \emph{Nash subset} of $\Omega$ if for every $x_0\in\Omega$ there exist an open neighbourhood $U$ of $x_0$ 
in $\Omega$ and Nash functions $f_1,\dots,f_s$ on $U$, such that $X\cap U=\{x\in U: f_1(x)=\dots=f_s(x)=0\}$. 
A germ $X_\xi$ at $\xi\in\Omega$ is a \emph{Nash germ} if there exists an open neighbourhood $U$ of $\xi$ in $\Omega$ such that $X\cap U$ is a Nash subset of $U$.

\begin{remark}
\label{rem:nash-germ}
Equivalently, $X_\xi$ is a Nash germ if its defining ideal can be generated by power series algebraic over the polynomial ring $\C[x]$; that is, $\OXxi\cong\C\{x\}/(f_1,\dots,f_s)\C\{x\}$ with $f_j\in\Cx$, $j=1,\dots,s$, where $x=(x_1,\dots,x_m)$ and $\Cx$ denotes the algebraic closure of $\C[x]_{(x)}$ in $\C[[x]]$.
\end{remark}

\begin{remark}
\label{rem:nash-facts}
{~}
\begin{enumerate}
\item \cite[Thm.\,2.15]{Tw}. A holomorphic $\vp:\Omega\to N$ (resp. germ $\vp_\xi$ of $\vp$ at $\xi\in\Omega$) is a Nash mapping (resp. Nash map-germ) if and only if its graph is a Nash subset of $\Omega\times N$ (resp. a Nash germ at $(\xi,f(\xi))\in\Omega\times N$).

\item \cite[Prop.\,2.6]{Tw}. If $\{X_\iota\}_{\iota\in I}$ is a family of Nash subsets of $\Omega$, then $\bigcap_{\iota\in I} X_\iota$ is a Nash subset of $\Omega$. If moreover the family $\{X_\iota\}_{\iota\in I}$ is locally finite, then $\bigcup_{\iota\in I} X_\iota$ is also Nash in $\Omega$.

\item \cite[Thm.\,2.10]{Tw}. Let $X$ be an irreducible $\C$-analytic subset of an open set $\Omega\subset M$. Then $X$ is a Nash subset of $\Omega$ if and only if there exists an irreducible algebraic subset $Z$ of $M$ such that $X$ is an analytic-irreducible component of $\Omega\cap Z$.

\item \cite[Thm.\,2.11]{Tw}. Let $X$ be a Nash subset of an open set $\Omega\subset M$, and let $Y$ be an irreducible component of $X$. Then $Y$ is a Nash subset of $\Omega$.

\item \cite[Thm.\,2.12]{Tw}. An irreducible Nash subset of the space $M$ is an irreducible algebraic subset of $M$.
\end{enumerate}
\end{remark}

\subsection{CR structure}
\label{subsec:CR}

There are many excellent monographs on CR geometry; see, e.g., \cite{BER}, \cite{Bog}, or \cite{DA2}.

Given an $\R$-linear subspace $L$ in $\C^n$ of dimension $d$, one defines the {\it CR dimension} of $L$ to be the largest $m$ 
such that $L$ contains a $\C$-linear subspace of (complex) dimension $m$. Clearly, $0\le m\le \left[\frac{d}{2}\right]$. 
A real submanifold $M$ in $\C^n$ of real dimension $d$ is called a {\it CR manifold} of CR dimension $m$, if the tangent 
space $T_p M$ has CR dimension $m$ for every point $p\in M$. We write 
$\dim_{CR}M=m$. In particular, if $m=0$, then $M$ is called a {\it totally real} submanifold.

\subsection{Real-analytic subgerms of complex-analytic germs and complexification}
\label{subsec:complexification}

We recall the following construction from \cite{AS}.
Let $\dl:\C^n_{\zeta}\to\C^{2n}_{(z,w)}$ be the map defined by $\dl(\zeta)=(\zeta,\bar\zeta)$. Then 
$\Dl=\dl(\C^n)$ is a totally real embedding of $\C^n$ into $\C^{2n}$. Suppose $R$ is an $\R$-analytic set in 
$\C^n$, and $p\in R$. With a moderate abuse of notation, we will denote by $R_p^c$ the complexification of the 
germ of $\dl(R)$ at $q:=\dl(p)$ in $\C^{2n}$, that is, the smallest germ of a $\C$-analytic set in $\C^{2n}$ 
which contains the germ of $\dl(R)$ at~$q$. (And we will call it the complexification of $R_p$, for short.)

Let now $X$ be a $\C$-analytic set in an open neighbourhood $U$ of $p$ in $\C^n$, defined by 
$g_1,\dots,g_t\in\OO(U)$, where $g_k(\zeta)=\sum_{|\nu|\geq0}c_{k\nu}\zeta^\nu$, $k=1,\dots,t$. We set
\begin{equation}
\label{eq:xz}
\begin{split}
 X^z &=\{(z,w)\in U': g_k(z)=\sum_{|\nu|\ge 0} c_{k\nu}z^{\nu}=0,\; k=1,\dots,t\}\\
 X^w &=\{(z,w)\in U': \bar{g}_k(w)=\sum_{|\nu|\ge 0} \overline{c}_{k\nu} w^{\nu}=0,\; k=1,\dots,t\}\,,
\end{split}
\end{equation}
where $U'$ is some small open neighbourhood of $q$ in $\C^{2n}$. Let $\pi^z:\C^{2n}_{(z,w)}\to\C^n_z$ 
and $\pi^w:\C^{2n}_{(z,w)}\to\C^n_w$ be the coordinate projections. Then $X^z=\pi^z(X^z)\times\C^n$ and 
$X^w=\C^n\times\pi^w(X^w)$, as the defining equations of $X^z$ (resp. $X^w$) do not involve variables $w$ 
(resp. $z$). Therefore, the set
\begin{equation}
\label{eq:x-hat}
\widehat X:= X^z\cap X^w=\pi^z(X^z)\times\pi^w(X^w)
\end{equation}
is $\C$-analytic (in $U'$), of dimension equal twice the dimension of $X$. If $X_p$ is irreducible, then the 
complexification $X^c_p$ of $X_p$ (viewed as an $\R$-analytic germ) coincides with $\widehat{X}_q$.
Indeed, clearly $\dl(X)\subset\widehat{X}$, hence $X^c_p\subset\widehat{X}_q$. But the irreducibility of $X_p$ implies that of $\widehat{X}_q$ (by~\eqref{eq:xz} and~\eqref{eq:x-hat}), and $\dim X^c_p=2\dim X_p=\dim\widehat{X}_q$, so $X^c_p=\widehat{X}_q$.

Let $A\subset\C^{2n}$ be a $\C$-analytic representative of the complexification $R^c_p$ at $q$ (in some open neighbourhood of $q$); i.e., $A_q=R^c_p$.
Then the holomorphic closure $\overline{R_p}^{HC}$ of the germ $R_p$ can be identified with the smallest $\C$-analytic germ containing $(\pi^z(A))_{\pi^z(q)}$.
Indeed, on the one hand we have
\[
R_p\subset X_p\ \Rightarrow\ A_q\subset \widehat{X}_q\ \Rightarrow\ (\pi^z(A))_{\pi^z(q)}\subset(\pi^z(X^z))_{\pi^z(q)}.
\]
On the other hand, suppose $(\pi^z(A))_{\pi^z(q)}\subset\tilde{Z}_{\pi^z(q)}$ for some $\C$-analytic $\tilde{Z}$ 
in a neighbourhood $V$ of $\pi^z(q)$ in $\C^n$. Say, $\tilde{Z}=\{z\in V: g_k(z)=0, k=1,\dots,t\}$.
Define a $\C$-analytic set $Z$ in a neighbourhood $U$ of $p$ as $Z=\{\zeta\in U: g_k(\zeta)=0, k=1,\dots,t\}$.
Then $Z=\dl^{-1}((\pi^z)^{-1}(\tilde{Z})\cap\Dl)$, and hence
\[
R_p\subset(\dl^{-1}(A\cap\Dl))_p\subset(\dl^{-1}((\pi^z)^{-1}(\tilde{Z})\cap\Dl))_p=Z_p\,.
\]

\section{Images of Nash sets under Nash mappings}
\label{sec:nash-image}

Our main tools in this section are Remmert's Rank Theorem and Chevalley's theorem on constructibility of images of algebraic sets. We recall them below for reader's convenience, in the form most suitable for our purposes (see, e.g., \cite{Loj}). We will denote by $\fbd_x\vp$ the dimension at $x$ of a fibre $\vp^{-1}(\vp(x))$.

\begin{theorem}[Remmert]
\label{thm:rem}
If $\vp:X\to Y$ is a holomorphic mapping of $\C$-analytic sets such that $\fbd_x\vp=k$ for all $x\in X$, then every point $\xi\in X$ admits an arbitrarily small open neighbourhood $W$ in $X$, such that $\vp(W)$ is locally analytic in $Y$, of dimension $\dim_\xi X-k$.
\end{theorem}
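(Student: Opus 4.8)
The plan is to reduce the statement, by a generic linear slicing, to Remmert's finite (proper) mapping theorem, and then to recover the full local image using the hypothesis that the fibre dimension is \emph{constantly} equal to $k$ (and not merely bounded by $k$). First I would make two harmless reductions. Choosing a representative of $X$ and decomposing the germ $X_\xi$ into irreducible components $X^{(1)}_\xi,\dots,X^{(r)}_\xi$, one has $\vp(W)=\bigcup_i\vp(W\cap X^{(i)})$; since a finite union of sets that are analytic in a common open set is again analytic there, and $\dim_\xi X=\max_i\dim_\xi X^{(i)}$, it suffices to treat a single component, so I assume $X_\xi$ is irreducible of dimension $d$. Next, passing to the graph $\Gamma=\{(x,\vp(x)):x\in X\}$, which is a $\C$-analytic set biholomorphic to $X$ via $x\mapsto(x,\vp(x))$, and writing $\vp=\pi\circ(\mathrm{id},\vp)$ with $\pi$ the coordinate projection onto $Y$, I reduce to the case where $\vp$ is the restriction to an analytic set of a linear projection; the fibres are unchanged, so they still have constant dimension $k$.

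The main construction is as follows. Embed a neighbourhood of $\xi$ into $\C^N$ with $\xi=0$, and set $F:=\vp^{-1}(\vp(\xi))$, so that $\dim_\xi F=k$. Choose a generic affine-linear subspace $L\ni\xi$ of codimension $k$ in $\C^N$ such that the germ $(F\cap L)_\xi$ is $0$-dimensional \emph{and} $\dim_\xi(X\cap L)=d-k$; such an $L$ exists because a generic linear section through a point of an analytic germ drops its dimension by exactly the codimension, and these are two dense conditions. Put $X':=X\cap L$. Then $\dim_\xi X'=d-k$, and the restriction $\vp|_{X'}$ has a $0$-dimensional fibre at $\xi$; hence, after shrinking to a suitable neighbourhood $W$ of $\xi$, it is a finite holomorphic map. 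By Remmert's finite mapping theorem, $\vp(X'\cap W)$ is a $\C$-analytic subset of an open neighbourhood $V$ of $\vp(\xi)$ in $Y$, and, finite maps preserving dimension, $\dim_{\vp(\xi)}\vp(X'\cap W)=d-k$.

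It remains to show that, after shrinking $W$, the slice captures the entire local image, i.e. $\vp(W)=\vp(X'\cap W)$, the inclusion $\supseteq$ being trivial. This is the crux of the argument, and the one place where the hypothesis $\fbd_x\vp=k$ is used as an \emph{equality}: for $y=\vp(x)$ with $x\in W$, the fibre $\vp^{-1}(y)$ has dimension exactly $k$ near $x$, so a codimension-$k$ slice ought to meet it, forcing $y\in\vp(X')$. Making this precise is exactly the content of Remmert's open mapping theorem, namely that a holomorphic map from a pure-dimensional space is open onto its image precisely when its fibre dimension is constant. I would invoke it to conclude that $\vp$ maps $W$ onto a subset of $\vp(X)$ that is open therein, and then, comparing with the closed analytic set $\vp(X'\cap W)\subseteq\vp(X)\cap V$ of the same dimension $d-k$, deduce the equality $\vp(W)=\vp(X'\cap W)$ near $\vp(\xi)$. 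This exhibits $\vp(W)$ as a locally analytic subset of $V$ of dimension $d-k=\dim_\xi X-k$, as required. The main obstacle lies entirely in this last step --- guaranteeing that every nearby fibre genuinely meets the generic slice --- which rests on the semicontinuity of fibre dimension together with the open mapping theorem, rather than on the slicing construction itself.
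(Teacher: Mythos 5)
First, a caveat: the paper does not prove this statement at all --- Theorem~\ref{thm:rem} is the classical Remmert Rank Theorem, recalled with a pointer to {\L}ojasiewicz's book --- so your attempt must be measured against the standard proof rather than anything in the text. Your skeleton (slice by a generic codimension-$k$ plane $L$ transverse to the fibre $F$ through $\xi$, apply the finite mapping theorem to $\vp|_{X\cap L}$, then argue that the slice captures the whole local image) is the right one, and the reductions are essentially sound, though when you restrict to a single irreducible component $X^{(i)}$ you should check that $\fbd_x(\vp|_{X^{(i)}})$ is still constantly $k$ (it is, but this needs upper semicontinuity of fibre dimension together with the density in $X^{(i)}$ of points lying on no other component). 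The genuine gap is the final step, and the argument you offer for it does not hold up. (a) A nearby $k$-dimensional fibre need not meet the affine plane $L$ inside $W$: already for $X=\C^2$, $\vp(z_1,z_2)=z_1$, $L=\{z_2=az_1\}$ with $|a|>1$ and $W$ a polydisc of polyradius $r$, the fibre $\{z_1=c\}$ with $r/|a|<|c|<r$ meets $W$ but meets $L$ only outside $W$; so the inclusion $\vp(W)\subseteq\vp(X'\cap W)$ requires shrinking the target and a genuine quantitative argument, not transversality at $\xi$ alone. (b) The appeal to Remmert's open mapping theorem is circular: that theorem presupposes an analytic target of dimension $d-k$ receiving the map (and is itself normally deduced from the rank theorem); ``openness onto $\vp(X)$'' is unavailable when $\vp(X)$ is not yet known to be analytic. (c) Even granting openness, an open subset of $\vp(X)$ containing a closed analytic set of the same dimension $d-k$ need not equal it near $\vp(\xi)$; no dimension count supplies the missing inclusion.

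The standard way to close the gap is to slice the \emph{map} rather than the set. Let $\ell$ be a linear map to $\C^k$ with $L=\ell^{-1}(\ell(\xi))$ and put $\Phi:=(\ell,\vp):X\to\C^k\times Y$. Then $\Phi^{-1}(\Phi(\xi))=F\cap L$ is $0$-dimensional at $\xi$, so by upper semicontinuity $\Phi$ is finite and proper on a suitable $W$, and $\Phi(W)$ is analytic of dimension $d$ in a product $T\times V$ with $T$ a connected polydisc in $\C^k$. For $y\in\vp(W)$, the fibre $\Phi(W)\cap(T\times\{y\})=\Phi(\vp^{-1}(y)\cap W)$ is an analytic subset of the $k$-dimensional slice $T\times\{y\}$ which is $k$-dimensional at each of its points (finite maps preserve local dimension, and here the hypothesis $\fbd_x\vp\equiv k$ is used); a $k$-dimensional analytic germ in a $k$-dimensional manifold is the full germ, so this fibre is open and closed in $T\times\{y\}$, hence equal to it. Therefore $\Phi(W)=T\times\vp(W)$, and intersecting with $\{\ell(\xi)\}\times V$ exhibits $\vp(W)$ as an analytic subset of $V$ of dimension $d-k$. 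This is exactly the statement your open-mapping argument was meant to deliver, and it is the one step of the proof that cannot be waved through.
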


\begin{theorem}[Chevalley]
\label{thm:chev}
Suppose that $M$ and $N$ are finite-dimensional $\C$-vector spaces, and let $\pi:M\times N\to N$ denote the canonical projection.
If $Z$ is an irreducible $\C$-algebraic subset of $M\times N$, then $\overline{\pi(Z)}$ is an irreducible $\C$-algebraic subset of $N$, and $\dim\overline{\pi(Z)}=\dim\pi(Z)$.
(Here $\overline{\pi(Z)}$ is the closure of $\pi(Z)$ in the Euclidean topology of $N$.)
\end{theorem}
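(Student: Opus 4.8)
The plan is to reduce this to the classical algebraic form of Chevalley's theorem and then bridge the gap between the Zariski and Euclidean topologies, which over $\C$ is where all of the analytic content sits.

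First I would regard $\pi$ as a morphism of $\C$-algebraic varieties and invoke Chevalley's constructibility theorem in its purely algebraic formulation: the image under a morphism of a Zariski-closed (indeed, of any constructible) set is constructible. Hence $\pi(Z)$ is a constructible subset of $N$, i.e.\ a finite union of sets locally closed in the Zariski topology. For the irreducibility, I would use that $\pi$ is continuous for the Zariski topology and $Z$ is irreducible, so $\pi(Z)$ is an irreducible subset of $N$; consequently its Zariski closure $V:=\overline{\pi(Z)}^{\,\mathrm{Zar}}$ is an irreducible $\C$-algebraic subset of $N$. Since $\pi(Z)$ is constructible and Zariski-dense in the irreducible set $V$, the union of the closures of its finitely many locally closed pieces is $V$, so by irreducibility one piece already has closure $V$; this forces $\pi(Z)$ to contain a nonempty Zariski-open subset $U$ of $V$.

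The one genuinely analytic step --- and the only place I expect real work --- is the identification of the Euclidean closure with $V$. On one hand $V$ is Zariski-closed, hence Euclidean-closed, so $\overline{\pi(Z)}^{\,\mathrm{Eucl}}\subseteq V$. For the reverse inclusion I would prove the standard lemma that a nonempty Zariski-open subset of an irreducible complex variety is Euclidean-dense. Passing to the regular locus $V_{\mathrm{reg}}$, which is a connected complex manifold and is Euclidean-dense in $V$ (its complement $V_{\mathrm{sing}}$ being a proper subvariety of strictly smaller dimension), the set $U\cap V_{\mathrm{reg}}$ is the complement in $V_{\mathrm{reg}}$ of a proper analytic subset, and the complement of a proper analytic subset of a connected complex manifold is dense. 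Hence $\overline{U}^{\,\mathrm{Eucl}}=V$, and therefore $\overline{\pi(Z)}=\overline{\pi(Z)}^{\,\mathrm{Eucl}}=V$ is an irreducible $\C$-algebraic set.

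Finally, for the dimension equality I would exploit the same open set $U$. Choosing a smooth point $x_0\in U$ and a sufficiently small Euclidean ball $B$ about it, the intersection $V\cap B$ is a closed $\C$-submanifold of $B$ of dimension $\dim V$ contained in $U\subseteq\pi(Z)$, whence $\dim\pi(Z)\geq\dim V$; the opposite inequality is immediate from monotonicity of dimension under inclusion together with $\pi(Z)\subseteq V$. Thus $\dim\overline{\pi(Z)}=\dim\pi(Z)$.
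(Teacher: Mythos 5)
Your proof is correct. Note, though, that the paper does not prove this statement at all: Theorem~\ref{thm:chev} is recalled as a classical fact ``in the form most suitable for our purposes'' with a pointer to {\L}ojasiewicz's book, so there is no in-paper argument to compare against. Your write-up supplies the standard proof, and all the key points are handled properly: constructibility of $\pi(Z)$ and irreducibility of its Zariski closure $V$; the extraction of a nonempty Zariski-open $U\subseteq\pi(Z)$ of $V$ from constructibility plus irreducibility (the observation that $C_i\supseteq\overline{U_i\cap C_i}=V$ for the dominating piece is the right way to see this); the identification of the Euclidean and Zariski closures via density of a nonempty Zariski-open subset of an irreducible variety; and the dimension count at a smooth point of $U$, which matches the paper's definition of $\dim$ as the supremum over closed submanifolds of open sets. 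The only ingredient you quote without proof beyond Chevalley itself is the connectedness of $V_{\mathrm{reg}}$; this is a genuine classical fact, but you could avoid it entirely by noting that $(V\setminus U)\cap V_{\mathrm{reg}}$ is a closed analytic subset of the manifold $V_{\mathrm{reg}}$ of strictly smaller dimension, hence nowhere dense on every component, which gives the Euclidean density of $U$ without any connectedness input.
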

\medskip

From now on, $M$ and $N$ denote finite-dimensional $\C$-vector spaces, and $\pi:M\times N\to N$ is the canonical projection. For a subset $W$ of an open set $V$ in $N$, we will denote by $\overline{W}^{HC(V)}$ the smallest $\C$-analytic subset of $V$ containing $W$ (by analogy to the holomorphic closure of a germ). We will need the following adaptation of Chevalley's theorem~\ref{thm:chev} to the local setting.

\begin{proposition}
\label{prop:nash-proj}
Let $Z$ be an irreducible $\C$-algebraic subset of $M\times N$, let $\Omega$ be a non-empty open set in $M\times N$, and let $X$ be an analytic-irreducible component of $Z\cap\Omega$. Then, for every point $(\xi,\eta)$ of $X$ and every pair of bounded open neighbourhoods $U$ of $\xi$ in $M$ and $V$ of $\eta$ in $N$ such that $\bU\times\bV\subset \Omega$, the set $\overline{\pi(X\cap(U\times V))}^{HC(V)}$ is a union of some analytic-irreducible components of $\overline{\pi(Z)}\cap V$. Moreover, $\dim\overline{\pi(X\cap(U\times V))}^{HC(V)}=\dim\pi(X\cap(U\times V))$.
\end{proposition}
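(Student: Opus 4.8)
The plan is to squeeze $\overline{\pi(X\cap(U\times V))}^{HC(V)}$ between $\overline{\pi(Z)}\cap V$ and the images of the analytic-irreducible components of $X\cap(U\times V)$, and to control everything by a single dimension count. First I would record the easy inclusion. By Chevalley's Theorem~\ref{thm:chev}, $\overline{\pi(Z)}$ is an irreducible $\C$-algebraic subset of $N$, so $\overline{\pi(Z)}\cap V$ is a pure-dimensional $\C$-analytic subset of $V$ of dimension $d:=\dim\overline{\pi(Z)}$. Since $X\subset Z$ and $X\cap(U\times V)\subset U\times V$, we have $\pi(X\cap(U\times V))\subset\overline{\pi(Z)}\cap V$, and minimality of the holomorphic closure gives
\[
\overline{\pi(X\cap(U\times V))}^{HC(V)}\subset\overline{\pi(Z)}\cap V.
\]
It then remains to show that this inclusion fills out whole analytic-irreducible components of $\overline{\pi(Z)}\cap V$, which I will do component by component.

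Next I would decompose $X\cap(U\times V)$ into its analytic-irreducible components $C_1,\dots,C_r$. Since $Z$ is pure-dimensional (being irreducible $\C$-algebraic), so is the open piece $X\cap(U\times V)$; hence each $C_j$ has dimension $\dim Z$ and contains a non-empty subset $P_j$ that is open in $X$ (a connected component of $X_{\mathrm{reg}}\cap(U\times V)$). Put $k_0:=\dim Z-d$. At a generic point $z$ of $X$ — a smooth point of $Z$ lying on no other local branch — the germs $X_z$ and $Z_z$ coincide, so the generic fibre dimension of $\pi|_X$ equals that of $\pi|_Z$, namely $k_0$; by upper semicontinuity $\fbd_z(\pi|_X)\geq k_0$ for all $z\in X$, so the locus $X_0:=\{z\in X:\fbd_z(\pi|_X)=k_0\}$ is dense and open in $X$, and $\pi|_{X_0}$ has constant fibre dimension $k_0$.

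The heart of the argument is the per-component estimate. Each $P_j$ meets the dense open $X_0$, so I may choose a smooth point $\zeta\in P_j\cap X_0$ (which lies in $U\times V$, since $P_j\subset C_j\subset U\times V$); applying Remmert's Theorem~\ref{thm:rem} to $\pi|_{X_0}$ on a small neighbourhood $W\subset P_j\cap X_0$ of $\zeta$ gives $\dim\pi(C_j)\geq\dim\pi(W)=\dim_\zeta X-k_0=d$, while $\pi(C_j)\subset\overline{\pi(Z)}\cap V$ forces the reverse inequality, whence $\dim\pi(C_j)=d$. Moreover $\overline{\pi(C_j)}^{HC(V)}$ is irreducible: a decomposition $A_1\cup A_2$ into proper $\C$-analytic subsets would yield $C_j\subset\pi^{-1}(A_1)\cup\pi^{-1}(A_2)$, contradicting the irreducibility of $C_j$ together with minimality of the closure. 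An irreducible $d$-dimensional $\C$-analytic subset of the pure-$d$-dimensional set $\overline{\pi(Z)}\cap V$ must coincide with one of its irreducible components, so each $\overline{\pi(C_j)}^{HC(V)}$ is an analytic-irreducible component of $\overline{\pi(Z)}\cap V$.

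Finally I would assemble the pieces. As $\pi(X\cap(U\times V))=\bigcup_j\pi(C_j)$ and a finite union of $\C$-analytic sets is $\C$-analytic, minimality yields $\overline{\pi(X\cap(U\times V))}^{HC(V)}=\bigcup_j\overline{\pi(C_j)}^{HC(V)}$, a union of analytic-irreducible components of $\overline{\pi(Z)}\cap V$; comparing dimensions gives $\dim\overline{\pi(X\cap(U\times V))}^{HC(V)}=d=\dim\pi(X\cap(U\times V))$. I expect the main obstacle to be precisely the per-component dimension estimate: one must guarantee that every analytic-irreducible component $C_j$ of the \emph{local} piece still attains the \emph{global} generic fibre dimension $k_0$ of $\pi|_Z$, so that no $C_j$ maps to something of dimension $<d$. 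This is exactly what the identification of germs $X_z=Z_z$ at generic points, the reduction to the dense open locus $X_0$, and the choice of a generic smooth point inside each $C_j$ are designed to secure, and it is also the only place where Remmert's constant-fibre-dimension hypothesis requires care.
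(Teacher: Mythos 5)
Your argument is correct, but it follows a genuinely different route from the paper's. The paper works globally on $Z$: it isolates the locus $\tSigma$ of excess fibre dimension, proves the codimension-two estimate $\dim\overline{\pi(\tSigma)}\leq\dim\pi(Z)-2$ so that $\Sigma=\pi^{-1}(\overline{\pi(\tSigma)})\cap Z$ is proper in $Z$, and then shows -- via Remmert at \emph{every} point of $(X\setminus\Sigma)\cap(U\times V)$ together with a compactness argument selecting finitely many product charts over each image point $y_0$ -- that $\pi(X\cap(U\times V))\setminus\overline{\pi(\tSigma)}$ is genuinely \emph{locally analytic} of pure dimension $d$, before identifying which components of $\overline{\pi(Z)}\cap V$ it fills. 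You instead argue component by component on the source: a single application of Remmert at one generic point of each analytic-irreducible component $C_j$ of $X\cap(U\times V)$ pins down $\dim\pi(C_j)=d$, the irreducibility of $\overline{\pi(C_j)}^{HC(V)}$ follows abstractly by pulling back a putative decomposition through $\pi$, and then the pure $d$-dimensionality of $\overline{\pi(Z)}\cap V$ forces $\overline{\pi(C_j)}^{HC(V)}$ to be one of its components. This is more economical -- you never need the codimension-two estimate, the set $\Sigma$, or the finitely-many-charts argument, and in fact you never use the boundedness of $U$ and $V$ -- while the paper's proof yields the stronger intermediate fact that the image itself (not merely its holomorphic closure) is locally analytic off a thin set, a local Chevalley-type statement of independent interest. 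Two small points to tighten: the components of $X\cap(U\times V)$ form a locally finite, not necessarily finite, family (harmless, since any subfamily of the analytic-irreducible components of $\overline{\pi(Z)}\cap V$ has analytic union); and the inequality $\fbd_z(\pi|_X)\geq k_0$ for all $z\in X$ deserves a sentence -- the set $\{z\in X:\fbd_z(\pi|_X)\leq k_0-1\}$ is open by upper semicontinuity, hence would meet the dense set where the fibre dimension equals $k_0$ (density of that set requiring the remark that $\tSigma\cap X$ is nowhere dense in the irreducible $X$), a contradiction.
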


\begin{proof}
Let $\lambda$ denote the minimal fibre dimension of the restriction $\pi|_Z:Z\to N$.
Set $\tSigma:=\{z\in Z:\fbd_z(\pi|_Z)>\lambda\}$. By Chevalley's theorem on upper semicontinuity of fibre dimension~\cite[Thm.\,13.1.3]{Gro}, $\tSigma$ is algebraic. Since $\tSigma$ is a proper subset of $Z$, and $Z$ is irreducible, it follows that $\dim\tSigma<\dim Z$. By Theorem~\ref{thm:chev}, $\overline{\pi(\tSigma)}$ is an algebraic subset of $N$, of dimension $\dim\pi(\tSigma)$. Note that
\begin{equation}
\label{eq:1}
\dim\overline{\pi(\tSigma)}\leq\dim\pi(Z)-2\,.
\end{equation}
Indeed, every irreducible component of $\tSigma$ is of dimension at most $\dim{Z}-1$, and the generic fibre dimension of $\pi$ restricted to such a component is at least $\lambda+1$. Hence
\[
\dim\overline{\pi(\tSigma)}=\dim\pi(\tSigma)\leq(\dim{Z}-1)-(\lambda+1)=(\dim{Z}-\lambda)-2=\dim\pi(Z)-2\,,
\]
where the inequality follows from~\cite[\S\,V.3.2, Thm.\,2]{Loj}.

Set $\Sigma:=\pi^{-1}(\overline{\pi(\tSigma)})\cap Z$. Then $\Sigma$ is an algebraic subset of $M\times N$ and $\Sigma\subset Z$. We claim that $\Sigma$ is a proper subset of $Z$ (or, equivalently, that $\dim\Sigma<\dim Z$, by irreducibility of $Z$).
Indeed, by surjectivity of $\pi$, we have $\pi(\pi^{-1}(\overline{\pi(\tSigma)}))=\overline{\pi(\tSigma)}$, hence
\[
\pi(\Sigma)=\pi(\pi^{-1}(\overline{\pi(\tSigma)})\cap Z)\subset\overline{\pi(\tSigma)}\,.
\]
Therefore $\dim\pi(\Sigma)\leq\dim\overline{\pi(\tSigma)}\leq\dim\pi(Z)-2$, by \eqref{eq:1}, and hence $\dim\Sigma<\dim{Z}$.
Since, by assumption, $X$ is analytic-irreducible and of the same dimension as $Z$, it follows that $X\cap\Sigma$ is nowhere-dense in $X$.

Let now $(\xi,\eta)$ be an arbitrary point of $X$, and let $U\times V$ be a relatively compact product neighbourhood of $(\xi,\eta)$ in $M\times N$ such that $\bU\times\bV\subset\Omega$.
By nowhere-density of $X\cap\Sigma$ in $X$, we have
\begin{equation}
\notag
\overline{\pi(X\cap(U\times V))}=\overline{\pi((X\setminus\Sigma)\cap(U\times V))}=\overline{\pi(X\cap(U\times V))\setminus\overline{\pi(\tSigma)}}\,,
\end{equation}
where the rightmost equality follows from the definition of $\Sigma$. In particular,
\begin{equation}
\label{eq:2}
\overline{\pi(X\cap(U\times V))}^{HC(V)}=\overline{\pi(X\cap(U\times V))\setminus\overline{\pi(\tSigma)}}^{HC(V)}\,,
\end{equation}
since every $\C$-analytic subset of $V$ is closed in $V$.

For every point $(x,y)\in (X\setminus\Sigma)\cap(U\times V)$, the projection $\pi|_X$ has constant fibre dimension $\lambda$ near $(x,y)$. (Indeed, it is clear for every $(x,y)$ such that $X_{(x,y)}=Z_{(x,y)}$, and for the other points it follows from upper semicontinuity of fibre dimension and the fact that $X\setminus\overline{Z\setminus X}$ is dense in $X$.) Hence, by Theorem~\ref{thm:rem}, 
there exist open neighbourhoods $U^{(x,y)}$ of $x$ in $U$ and $V^{(x,y)}$ of $y$ in $V$, such that $\pi(X\cap(U^{(x,y)}\times V^{(x,y)}))$ is analytic in an open neighbourhood of $y$ in $N$, of pure dimension $\dim{X}-\lambda=\dim{Z}-\lambda$. On the other hand, $\pi(X\cap(U^{(x,y)}\times V^{(x,y)}))$ is contained in the intersection of the algebraic set $\overline{\pi(Z)}$ with $V^{(x,y)}$, which is also of dimension $\dim{Z}-\lambda$. It follows that the germ $(\pi(X\cap(U^{(x,y)}\times V^{(x,y)})))_y$ is a union of some analytic-irreducible components of $(\overline{\pi(Z)})_y$.

For a fixed $y_0\in\pi(X\cap(U\times V))\setminus\overline{\pi(\tSigma)}$, consider the family of open sets $U^{(x,y_0)}\times V^{(x,y_0)}$ as above, over all $x\in U$ such that $(x,y_0)\in X$. By relative compactness of $U$, we can choose finitely many $x_1,\dots,x_s\in U$, such that
\begin{multline}
\notag
(\pi(X\cap(U\times V)))_{y_0}\\
=(\pi(X\cap(U^{(x_1,y_0)}\times V^{(x_1,y_0)})))_{y_0}\cup\dots\cup(\pi(X\cap(U^{(x_s,y_0)}\times V^{(x_s,y_0)})))_{y_0}\,,
\end{multline}
and hence the germ $(\pi(X\cap(U\times V)))_{y_0}$ is a union of some analytic-irreducible components of the germ $(\overline{\pi(Z)})_{y_0}$. Therefore $\pi(X\cap(U\times V))\setminus\overline{\pi(\tSigma)}$ is a locally analytic subset of $V\setminus\overline{\pi(\tSigma)}$, of pure dimension $\dim{Z}-\lambda$.

Next, note that, for every analytic-irreducible component $\Lambda$ of $\overline{\pi(Z)}\cap V$, if $(\pi(X\cap(U\times V)))_y\supset\Lambda_y$ at some point $y\in V\setminus\overline{\pi(\tSigma)}$, then $\overline{\pi(X\cap(U\times V))}^{HC(V)}\supset\Lambda$ (by irreducibility of $\Lambda$). Let $\{\Lambda_\iota\}_{\iota\in I}$ be the family of all the analytic-irreducible components of $\overline{\pi(Z)}\cap V$ whose germ at some point of $V\setminus\overline{\pi(\tSigma)}$ is contained in the corresponding germ of $\pi(X\cap(U\times V))$. Then
\begin{equation}
\label{eq:2a}
\bigcup_{\iota\in I}\Lambda_\iota\subset\overline{\pi(X\cap(U\times V))}^{HC(V)}\ \mathrm{and}\ \ 
\pi(X\cap(U\times V))\setminus\overline{\pi(\tSigma)}\subset\bigcup_{\iota\in I}(\Lambda_\iota\setminus\overline{\pi(\tSigma)})\,.
\end{equation}
On the other hand, by local finiteness of the family $\{\Lambda_\iota\}_{\iota\in I}$, we have
\[
\overline{\bigcup_{\iota\in I}(\Lambda_\iota\setminus\overline{\pi(\tSigma)})}=\bigcup_{\iota\in I}\overline{(\Lambda_\iota\setminus\overline{\pi(\tSigma)})}=\bigcup_{\iota\in I}\Lambda_\iota\,.
\]
The latter is $\C$-analytic in $V$, hence combining \eqref{eq:2} and \eqref{eq:2a}, we get
\[
\bigcup_{\iota\in I}\Lambda_\iota\subset\overline{\pi(X\cap(U\times V))}^{HC(V)}=\overline{\pi(X\cap(U\times V))\setminus\overline{\pi(\tSigma)}}^{HC(V)}
\subset\overline{\bigcup_{\iota\in I}\Lambda_\iota}^{HC(V)}=\bigcup_{\iota\in I}\Lambda_\iota\,;
\]
i.e., $\displaystyle{\overline{\pi(X\cap(U\times V))}^{HC(V)}=\bigcup_{\iota\in I}\Lambda_\iota}$.

The last assertion of the proposition follows from the fact that $\pi(X\cap(U\times V))\setminus\overline{\pi(\tSigma)}$ is of pure dimension $\dim\overline{\pi(Z)}$.
\qed
\end{proof}

\begin{corollary}
\label{cor:nash-proj}
Let $X$ be a Nash subset of an open set $\Omega\subset M\times N$. Let $U$ and $V$ be bounded open subsets of $M$ and $N$ respectively, such that $\bU\times\bV\subset\Omega$. Then $\overline{\pi(X\cap(U\times V))}^{HC(V)}$ is a Nash subset of $V$, of dimension equal to $\dim\pi(X\cap(U\times V))$.
\end{corollary}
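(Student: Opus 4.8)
The goal is Corollary~\ref{cor:nash-proj}: to pass from the statement of Proposition~\ref{prop:nash-proj}, which concerns a single analytic-irreducible component $X$ of $Z\cap\Omega$ for an \emph{irreducible algebraic} $Z$, to an arbitrary Nash subset $X$ of $\Omega$. The plan is to decompose $X$ into irreducible components and realize each component as an analytic-irreducible component of $Z\cap\Omega$ for a suitable irreducible algebraic $Z$, so that Proposition~\ref{prop:nash-proj} applies componentwise.

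First I would appeal to Remark~\ref{rem:nash-facts}(4): since $X$ is Nash in $\Omega$, each of its irreducible components $X_1,\dots,X_r$ is again a Nash subset of $\Omega$. Because $U$ and $V$ are bounded with $\bU\times\bV\subset\Omega$, only finitely many components can meet the relatively compact set $\bU\times\bV$, so the decomposition relevant to $X\cap(U\times V)$ is finite. For each irreducible Nash component $X_k$, Remark~\ref{rem:nash-facts}(3) supplies an irreducible algebraic subset $Z_k$ of $M\times N$ such that $X_k$ is an analytic-irreducible component of $\Omega\cap Z_k$. This is exactly the hypothesis of Proposition~\ref{prop:nash-proj}, applied to the pair $(Z_k,X_k)$.

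Next I would intersect with the chosen neighbourhoods and apply the proposition to each $X_k$. For fixed $U,V$ as in the statement, Proposition~\ref{prop:nash-proj} gives that $\overline{\pi(X_k\cap(U\times V))}^{HC(V)}$ is a union of analytic-irreducible components of $\overline{\pi(Z_k)}\cap V$, hence a Nash subset of $V$ (each such component is Nash by Remark~\ref{rem:nash-facts}(3) or (4), and a finite union of Nash sets is Nash by Remark~\ref{rem:nash-facts}(2)), with $\dim\overline{\pi(X_k\cap(U\times V))}^{HC(V)}=\dim\pi(X_k\cap(U\times V))$. I would then assemble these via the identity
\[
\overline{\pi(X\cap(U\times V))}^{HC(V)}=\bigcup_{k}\overline{\pi(X_k\cap(U\times V))}^{HC(V)},
\]
which holds because $X\cap(U\times V)=\bigcup_k X_k\cap(U\times V)$, because $\pi$ commutes with finite unions, and because the smallest $\C$-analytic set in $V$ containing a finite union is the union of the smallest $\C$-analytic sets containing each piece (the union of finitely many $\C$-analytic, indeed Nash, sets being again such). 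A finite union of Nash subsets of $V$ is Nash by Remark~\ref{rem:nash-facts}(2), giving the first assertion.

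For the dimension claim I would take the maximum over $k$: from $\dim\overline{\pi(X_k\cap(U\times V))}^{HC(V)}=\dim\pi(X_k\cap(U\times V))$ and the fact that both $\dim\overline{\,\cdot\,}^{HC(V)}$ and $\dim\pi(\,\cdot\,)$ of a finite union equal the maximum of the corresponding dimensions of the pieces, one obtains $\dim\overline{\pi(X\cap(U\times V))}^{HC(V)}=\dim\pi(X\cap(U\times V))$. The main point requiring care—the step I expect to be the only genuine obstacle—is the reduction that exhibits each irreducible Nash component $X_k$ as an analytic-irreducible component of $\Omega\cap Z_k$ for an \emph{irreducible} algebraic $Z_k$; everything after that is a matter of distributing $\pi$ and $\overline{\,\cdot\,}^{HC(V)}$ over the finite union. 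This reduction is precisely what Remark~\ref{rem:nash-facts}(3)--(4) provide, so the work is essentially in invoking the structure theory of Nash sets correctly and checking that the relative compactness of $\bU\times\bV$ keeps all unions finite.
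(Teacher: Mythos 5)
Your proposal is correct and follows essentially the same route as the paper's proof: decompose $X$ into analytic-irreducible components (finitely many relevant ones by compactness of $\bU\times\bV$), use Remark~\ref{rem:nash-facts}(4) and (3) to realize each as an analytic-irreducible component of $Z_\iota\cap\Omega$ for an irreducible algebraic $Z_\iota$, apply Proposition~\ref{prop:nash-proj} componentwise, and take the union. Your additional remarks on distributing $\dim$ and $\overline{\,\cdot\,}^{HC(V)}$ over the finite union are a correct elaboration of what the paper leaves implicit.
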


\begin{proof}
Let $X=\bigcup_{\iota\in I}X_\iota$ be the decomposition of $X$ into (a locally finite family of) analytic-irreducible components. Then the family $\{X_\iota\cap(U\times V)\}_{\iota\in I}$ is finite, by compactness of $\bU\times \bV$. By Remark~\ref{rem:nash-facts}(4), each $X_\iota$ is a Nash subset of $\Omega$. Further, by irreducibility of $X_\iota$ and Remark~\ref{rem:nash-facts}(3), there exists, for every $\iota\in I$, an irreducible algebraic set $Z_\iota$ in $M\times N$ such that $X\iota$ is an analytic-irreducible component of $Z_\iota\cap\Omega$. Therefore $\overline{\pi(X_\iota\cap(U\times V))}^{HC(V)}$ is a Nash subset of $V$, by Proposition~\ref{prop:nash-proj}, and hence so is $\overline{\pi(X\cap(U\times V))}^{HC(V)}$, as
\[
\overline{\pi(X\cap(U\times V))}^{HC(V)}=\overline{\bigcup_{\iota\in I}\pi(X_\iota\cap(U\times V))}^{HC(V)}=\bigcup_{\iota\in I}\overline{\pi(X_\iota\cap(U\times V))}^{HC(V)}\,.
\]
\qed
\end{proof}

\begin{remark}
\label{rem:no-local-strong-Chevalley}
A stronger version of Chevalley's theorem~\ref{thm:chev} asserts that a projection of a $\C$-algebraic-constructible set (that is, a boolean combination of $\C$-algebraic sets) is itself a $\C$-algebraic-constructible set. One may thus expect that a local version of this result (for Nash-constructible sets) holds as well. This is not the case.

We say that a subset $X$ of an open $\Omega\subset M$ is \emph{Nash-constructible} (in $\Omega$) if for every $x_0\in\Omega$ there exist an open neighbourhood $U$ of $x_0$ in $\Omega$ and Nash subsets $X_1,\dots,X_s,Y_1\dots,Y_s$ of $U$ such that $X\cap U=\bigcup_{k=1}^s(X_k\setminus Y_k)$.

Now, let $X:=\{(z,w)\in\C^2:z+w=1\}$ and let $\Omega$ be the open polydisc $\{(z,w)\in\C^2:|z|<1,|w|<1\}$. Then the projection $\pi$ of $X\cap\Omega$ to the $z$-axis is not a Nash-constructible subset of $\pi(\Omega)$.

More generally, the main result of \cite{Mar} implies that the closure under the operations of Cartesian product,
finite union, complement and Cartesian projection of the family of sets $X$ which are semialgebraic in some $\C^n$ and 
Nash-constructible in some open $U\subset\C^n$ equals to the family of all the semialgebraic subsets of $\C^n$ (for all $n$). Therefore it is strictly larger than the family of Nash-constructible sets.
\end{remark}

\begin{theorem}
\label{thm:nash-map-image}
Let $\Omega$ be an open subset of $M$, let $\vp:\Omega\to N$ be a Nash mapping, and let $X$ be a Nash subset of $\Omega$. Then, for every point $\xi\in X$ and an arbitrarily small open neighbourhood $U$ of $\xi$, there exists an arbitrarily small open neighbourhood $V$ of $\vp(\xi)$ such that $\overline{\vp(X\cap U)\cap V}^{HC(V)}$ is a Nash subset of $V$, of dimension equal to $\dim\vp(X\cap U)$.
In particular, $\overline{(\vp(X\cap U))_{\vp(\xi)}}^{HC}$ is a Nash germ, of dimension $\dim\vp(X\cap U)$.
\end{theorem}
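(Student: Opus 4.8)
The plan is to reduce the image under $\vp$ to a coordinate projection by passing to the graph of $\vp$, and then invoke Corollary~\ref{cor:nash-proj}. By Remark~\ref{rem:nash-facts}(1), the graph $\Gamma:=\{(x,\vp(x)):x\in\Omega\}$ is a Nash subset of $\Omega\times N\subset M\times N$. Since $X$ is Nash in $\Omega$, its preimage $X\times N$ under the projection $M\times N\to M$ (a polynomial, hence Nash, map) is Nash in $\Omega\times N$, so by Remark~\ref{rem:nash-facts}(2) the set $X':=\Gamma\cap(X\times N)=\{(x,\vp(x)):x\in X\}$ is a Nash subset of $\Omega\times N$. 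With $\pi:M\times N\to N$ the canonical projection, a direct check gives $\pi\bigl(X'\cap(U\times V)\bigr)=\vp(X\cap U)\cap V$ for any $U\subset M$ and $V\subset N$.

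I would then fix $\xi\in X$, take $U$ a bounded open neighbourhood of $\xi$ with $\bU\subset\Omega$, and $V$ a bounded open neighbourhood of $\vp(\xi)$ in $N$; since $\bU\times\bV\subset\Omega\times N$, Corollary~\ref{cor:nash-proj} applied to $X'$ in $\Omega\times N$ shows that
\[
\overline{\vp(X\cap U)\cap V}^{HC(V)}=\overline{\pi(X'\cap(U\times V))}^{HC(V)}
\]
is a Nash subset of $V$ of dimension $\dim\pi(X'\cap(U\times V))=\dim\bigl(\vp(X\cap U)\cap V\bigr)$. This already yields the Nash-ness assertion; it remains to match the dimension with $\dim\vp(X\cap U)$ and to deduce the germ statement.

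The main obstacle is the dimension bookkeeping, namely the equality $\dim(\vp(X\cap U)\cap V)=\dim\vp(X\cap U)$, which is what forces $U$ to be taken small: for a large $U$ the top-dimensional part of $\vp(X\cap U)$ may sit away from $\vp(\xi)$ (e.g.\ for a union of a plane and a transverse line, with $\xi$ on the line), so the localized image near $\vp(\xi)$ would drop dimension. Taking $U$ small enough that the irreducible components of $X\cap U$ are exactly those of the germ $X_\xi$, I would pass to a component $X_j\ni\xi$ realizing $\dim\vp(X\cap U)$. On the dense open subset $X_j^\circ\subset X_j$ where $X_j$ is smooth and $\vp|_{X_j}$ has minimal (generic) fibre dimension $\lambda$, Remmert's Theorem~\ref{thm:rem} shows that $\vp(X_j^\circ)$ is locally analytic of pure dimension $\dim X_j-\lambda=\dim\vp(X\cap U)$. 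Since $X_j^\circ$ is dense in $X_j\ni\xi$, continuity gives $\vp(\xi)\in\overline{\vp(X_j^\circ)}$, so $V$ meets $\vp(X_j^\circ)$; as the latter is locally analytic of pure dimension $\dim\vp(X\cap U)$ near each of its points, $\dim(\vp(X_j^\circ)\cap V)=\dim\vp(X\cap U)$, whence the desired equality.

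Finally, for the germ statement I would let $V$ shrink to $\vp(\xi)$. Since $(\vp(X\cap U)\cap V)_{\vp(\xi)}=(\vp(X\cap U))_{\vp(\xi)}$, the germ at $\vp(\xi)$ of $\overline{\vp(X\cap U)\cap V}^{HC(V)}$ is, by the above, a Nash germ containing $(\vp(X\cap U))_{\vp(\xi)}$; as $V$ decreases these germs form a descending chain of analytic germs, which stabilizes by Noetherianity of $\OO_{N,\vp(\xi)}$. Any analytic germ containing $(\vp(X\cap U))_{\vp(\xi)}$ has a representative analytic in a small $V'$ containing $\vp(X\cap U)\cap V'$, hence containing $\overline{\vp(X\cap U)\cap V'}^{HC(V')}$; this forces the stabilized germ to be the smallest analytic germ containing $(\vp(X\cap U))_{\vp(\xi)}$, namely $\overline{(\vp(X\cap U))_{\vp(\xi)}}^{HC}$. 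Being the germ of a Nash set, it is a Nash germ by definition, and of the dimension computed above.
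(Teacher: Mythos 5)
Your argument is correct and follows essentially the same route as the paper: pass to the graph of $\vp$ restricted to $X$, which is a Nash subset of $\Omega\times N$, and apply Corollary~\ref{cor:nash-proj} to the coordinate projection $\pi$. The only difference is that you additionally justify the dimension bookkeeping $\dim\bigl(\vp(X\cap U)\cap V\bigr)=\dim\vp(X\cap U)$ for small $U$ (via Remmert's theorem on the locus of generic fibre dimension of a top-dimensional component through $\xi$) and the passage from the local statement to the germ statement; the paper leaves both points implicit, and your justifications are correct.
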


\begin{proof}
Let $\xi$ be a point of $X$, and let $U$ be a bounded open neighbourhood of $\xi$ in $\Omega$. Then the graph of $\vp$ restricted to $X\cap U$,
\[
\Gamma_{\vp|_{X\cap U}}=\{(x,y)\in U\times N: x\in X, y=\vp(x)\}
\]
is a Nash subset of $U\times N$, as $\Gamma_{\vp|_{X\cap U}}=(U\times N)\cap\Gamma_\vp\cap(\Omega\times N)$ is the trace on $U\times N$ of a Nash subset of $\Omega\times N$.
Let $\pi:M\times N\to N$ denote the canonical projection. Then, by Corollary~\ref{cor:nash-proj}, one can choose an arbitrarily small open neighbourhood $V$ of $\vp(\xi)$ in $N$, such that $\overline{\pi(\Gamma_{\vp|_{X\cap U}}\cap(U\times V))}^{HC(V)}$ is a Nash subset of $V$, of dimension $\dim\pi(\Gamma_{\vp|_{X\cap U}}\cap(U\times V))$. But
\[
\pi(\Gamma_{\vp|_{X\cap U}}\cap(U\times V))=\vp(X\cap U)\cap V\,,
\]
which completes the proof.
\qed
\end{proof}

\begin{example}
\label{ex:only-nash}
Note that $\overline{(\vp(X\cap U))_{\vp(\xi)}}^{HC}$ need not be a $\C$-algebraic germ, even if $\Omega=M$ and $\vp$ is a $\C$-polynomial mapping. Let, for example, $Y$ be the curve $z^2=w^2(w+1)$ in $\C^2$, let $X$ be the normalization of $Y$, and let $\vp:\C\cong X\to\C^2$ be the composite of the canonical maps $X\to Y$ and $Y\hookrightarrow\C^2$. Let $\xi\in X$ be one of the two preimages of $0\in Y$. Then, for a sufficiently small open neighbourhood $U$ of $\xi$, $\overline{(\vp(X\cap U))_0}^{HC}=(\vp(X\cap U))_0$ is the germ of one of the two $\C$-analytic branches of $Y$ near $0$, which is not a $\C$-algebraic germ, because $Y$ is irreducible.
\end{example}

\begin{remark}
\label{rem:false-in-general}
Note also that Theorem~\ref{thm:nash-map-image} is false without the assumption that $X$ is Nash. Indeed, let $\vp:\C^4\ni(w,x,y,z)\to(w,x,y)\in\C^3$ and let $X\subset\C^4$ be a $\C$-analytic set defined by equations $x=wz, y=wze^z$. Then $\dim{X}=2$, hence $\dim\vp(X\cap U)\leq2$ for an arbitrary neighbourhood $U$ of the origin in $\C^4$. However, for an arbitrarily small $U$, the germ $(\vp(X\cap U))_0$ is contained in no proper $\C$-analytic subgerm of $\C^3_0$, hence $\dim\overline{(\vp(X\cap U))_0}^{HC}\!=3>\dim\vp(X\cap U)$. This is an Osgood example (see, e.g., \cite{GR}).
\end{remark}

\section{Semialgebraic stratification by holomorphic closure dimension}
\label{sec:HC-stratification}

\subsection{Holomorphic closure of a semialgebraic set}
\label{subsec:HC-nash}

Let $S$ be a semialgebraic subset of a finite-dimensional $\C$-vector space $M$. Let $\xi$ be a point of $\bS$.
Recall that a $\C$-analytic germ $X_\xi\subset M_\xi$ is the \emph{holomorphic closure} of $S$ at $\xi$ (denoted $\overline{S_\xi}^{HC}$) if it is the smallest $\C$-analytic germ at $\xi$ containing $S_\xi$.

Proposition~\ref{prop:HC-nash} asserts that the holomorphic closure of a semialgebraic set is a Nash germ.

\subsubsection*{Proof of Proposition~\ref{prop:HC-nash}}
By \S~\ref{subsec:semialg}, $S$ can be written as a finite union of semialgebraic open subsets of $\R$-algebraic sets; i.e., sets of the form
\[
\{\zeta\in\C^n:f_1(\zeta,\bar{\zeta})=\dots=f_k(\zeta,\bar{\zeta})=0, g_1(\zeta,\bar{\zeta})>0,\dots,g_l(\zeta,\bar{\zeta})>0\}\,,
\]
where $f_1,\dots,f_k,g_1,\dots,g_l$ are polynomial functions with real coefficients.
Let $\xi\in\overline{S}$. Let $\tilde{S}^1,\dots,\tilde{S}^\mu$ be all such subset of $S$, which are adherent to $\xi$, and let $R^1,\dots,R^\mu$ be the corresponding $\R$-algebraic sets. Then, for every $j=1,\dots,\mu$, $S_\xi\cap R^j_\xi$ is an open subgerm of $R^j_\xi$.

We will now use terminology of \S~\ref{subsec:complexification}. Let $A^1,\dots,A^\mu$ be $\C$-algebraic subsets of $\C^{2n}_{(z,w)}$, such that $(R^j_\xi)^c=A^j_\eta$, where $\eta:=\dl(\xi)$. Let $R^j_\xi=R^{j,1}_\xi\cup\dots\cup R^{j,s_j}_\xi$be the decomposition of $R^j_\xi$ into irreducible $\R$-analytic germs, and let $A^j_\eta=A^{j,1}_\eta\cup\dots\cup A^{j,t_j}_\eta$ be the decomposition of $A^j_\eta$ into irreducible $\C$-analytic subgerms. Then, for every $j$, we have $t_j=s_j$ and (up to permutation of indices) $A^{j,k}_\eta$ is precisely the complexification of $R^{j,k}_\xi$ ($k=1,\dots,s_j$), by \cite[Prop.\,9]{Car}.

Now, for all $j$ and $k$, either $S_\xi\cap R^{j,k}_\xi=\varnothing$ or else $S_\xi\cap R^{j,k}_\xi$ is a non-empty open subgerm of $R^{j,k}_\xi$. Therefore the holomorphic closure $\overline{S_\xi}^{HC}$ is the union of $\overline{R^{j,k}_\xi}^{HC}$ over all pairs $(j,k)$ such that $S_\xi\cap R^{j,k}_\xi\neq\varnothing$. Consequently, it suffices to show that each $\overline{R^{j,k}_\xi}^{HC}$ is a Nash germ.

By \S~\ref{subsec:complexification}, we can identify $\overline{R^{j,k}_\xi}^{HC}$ with the smallest $\C$-analytic germ containing $(\pi^z(A^{j,k}))_{\pi^z(\eta)}$, where $A^{j,k}$ is an irreducible representative of the germ $A^{j,k}_\eta$ in some open neighbourhood $U'$ of $\eta$ in $\C^{2n}$. After shrinking $U'$ if needed, we can assume that $A^{j,k}$ is an analytic-irreducible component of $A^j\cap U'$, and hence $A^{j,k}$ is a Nash subset of $U'$ (Remark~\ref{rem:nash-facts}). Then
\[
\overline{R^{j,k}_\xi}^{HC}=\overline{(\pi^z(A^{j,k}))_{\pi^z(\eta)}}^{HC}
\]
is a Nash germ, by Theorem~\ref{thm:nash-map-image}.
\qed

\medskip

As an immediate consequence of Proposition~\ref{prop:HC-nash}, we recover the main result of \cite{FLR} (Proposition~\ref{prop:holo-semialg}, below). The global statement (``semialgebraic $\C$-analytic subset of $M$ is $\C$-algebraic'') follows from Proposition~\ref{prop:holo-semialg} via Remark~\ref{rem:nash-facts}(5) (see also \cite[Cor.\,4.5]{PS}).

\begin{proposition}
\label{prop:holo-semialg}
Let $X$ be a $\C$-analytic subset of an open set $\Omega$ in $M$. If $X$ is semialgebraic in $M$, then $X$ is a Nash subset of $\Omega$.
\end{proposition}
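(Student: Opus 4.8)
The plan is to reduce the statement to Proposition~\ref{prop:HC-nash} by exploiting the elementary fact that a $\C$-analytic germ is its own holomorphic closure. Since being a Nash subset of $\Omega$ is a local condition (it is required to hold in a neighbourhood of every point of $\Omega$), it suffices to prove that the germ $X_\xi$ is a Nash germ for every $\xi\in\Omega$.

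First I would dispose of the points $\xi\in\Omega\setminus X$. As $X$ is $\C$-analytic in $\Omega$, it is closed in $\Omega$, so any such $\xi$ has an open neighbourhood $U$ with $X\cap U=\varnothing$; the empty set is trivially a Nash subset of $U$ (cut out, say, by the constant Nash function $1$). This handles the complement and leaves only the points of $X$ to treat.

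The heart of the argument concerns a point $\xi\in X$. Then $\xi\in\bS$ with $S:=X$, and $S$ is semialgebraic by hypothesis, so Proposition~\ref{prop:HC-nash} applies and asserts that the holomorphic closure $\overline{X_\xi}^{HC}$ is a Nash germ. The key observation is that, because $X$ is $\C$-analytic near $\xi$, the germ $X_\xi$ is itself a $\C$-analytic germ containing $X_\xi$; being the smallest such germ, it coincides with its own holomorphic closure, that is, $\overline{X_\xi}^{HC}=X_\xi$. Consequently $X_\xi$ is a Nash germ, which by definition means that there is an open neighbourhood $U$ of $\xi$ on which $X\cap U$ is a Nash subset of $U$.

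Finally I would assemble these local conclusions: the neighbourhoods produced above --- the ones disjoint from $X$ at points of $\Omega\setminus X$, together with the Nash neighbourhoods at points of $X$ --- form an open cover of $\Omega$, on each member of which $X$ is a Nash subset; hence $X$ is a Nash subset of $\Omega$. I do not expect a genuine obstacle here, since the argument is essentially immediate once Proposition~\ref{prop:HC-nash} is in hand. The only point requiring care is non-circularity: one must check that the proof of Proposition~\ref{prop:HC-nash} (which proceeds via Theorem~\ref{thm:nash-map-image} and the complexification of \S~\ref{subsec:complexification}) does not itself invoke the present statement --- and it does not.
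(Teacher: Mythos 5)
Your argument is correct and is essentially identical to the paper's own proof: reduce to showing each germ $X_\xi$ is Nash, invoke Proposition~\ref{prop:HC-nash}, and note that a $\C$-analytic germ equals its own holomorphic closure. The extra checks (points off $X$, non-circularity) are sound but the paper leaves them implicit.
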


\begin{proof}
It suffices to show that the germ of $X$ at every point of $X$ is Nash.
Let $\xi$ be a point of $X$. By Proposition~\ref{prop:HC-nash}, the holomorphic closure $\overline{X_\xi}^{HC}$ is a Nash germ. But $X_\xi$ itself is the smallest $\C$-analytic germ containing $X_\xi$, so $X_\xi=\overline{X_\xi}^{HC}$ is a Nash germ.
\qed
\end{proof}
\medskip

In the next section, we will study complex dimensions of preimages of semialgebraic sets under holomorphic semialgebraic mappings.
It turns out that such mappings are necessarily Nash.

\begin{proposition}
\label{prop:nash-map}
Let $\Omega$ be an open semialgebraic subset of $M$. If $\vp:\Omega\to N$ is a holomorphic semialgebraic mapping, then $\vp$ is Nash.
\end{proposition}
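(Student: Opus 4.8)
The plan is to exploit the graph characterisation of Nash mappings recorded in Remark~\ref{rem:nash-facts}(1): a holomorphic map $\vp\colon\Omega\to N$ is Nash if and only if its graph $\Gamma_\vp$ is a Nash subset of $\Omega\times N$. Thus it suffices to prove that $\Gamma_\vp$ is Nash in $\Omega\times N$, and I would do this by verifying the two hypotheses of Proposition~\ref{prop:holo-semialg} for the set $\Gamma_\vp$, regarded inside the open set $\Omega\times N$ of the $\C$-vector space $M\times N$.

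First I would observe that $\Gamma_\vp$ is $\C$-analytic in $\Omega\times N$. Indeed, fixing a basis $y=(y_1,\dots,y_q)$ of $N$ and writing $\vp=(\vp_1,\dots,\vp_q)$ with each $\vp_i$ holomorphic on $\Omega$, the graph is exactly the common zero set of the holomorphic functions $(x,y)\mapsto y_i-\vp_i(x)$, $i=1,\dots,q$, on $\Omega\times N$; hence $\Gamma_\vp$ is a $\C$-analytic subset of the open set $\Omega\times N$. Second, $\Gamma_\vp$ is semialgebraic in $M\times N$: this is precisely the definition of a holomorphic semialgebraic mapping (\S~\ref{subsec:semialg}).

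With both hypotheses in hand, Proposition~\ref{prop:holo-semialg}, applied with the ambient space $M$ replaced by $M\times N$ and the open set $\Omega$ replaced by $\Omega\times N$, yields that $\Gamma_\vp$ is a Nash subset of $\Omega\times N$. Invoking Remark~\ref{rem:nash-facts}(1) once more then gives that $\vp$ is a Nash mapping, completing the argument.

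I do not expect any genuine obstacle here beyond arranging the pieces correctly: the real content has already been absorbed into Proposition~\ref{prop:holo-semialg} (itself a consequence of Proposition~\ref{prop:HC-nash}), so this statement is essentially a formal corollary. The only point requiring a little care is the bookkeeping of ambient spaces — one must apply Proposition~\ref{prop:holo-semialg} in $M\times N$ rather than in $M$, and note that semialgebraicity of the graph \emph{in $M\times N$} is exactly what the definition of a holomorphic semialgebraic map provides, so no additional Tarski--Seidenberg argument is needed.
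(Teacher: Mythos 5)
Your proof is correct and is essentially identical to the paper's own argument: both establish that $\Gamma_\vp$ is semialgebraic (by definition) and $\C$-analytic in $\Omega\times N$, then apply Proposition~\ref{prop:holo-semialg} and Remark~\ref{rem:nash-facts}(1). The only cosmetic difference is that you exhibit the defining equations $y_i-\vp_i(x)$ explicitly, whereas the paper notes that $\Gamma_\vp$ is biholomorphic to $\Omega$.
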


\begin{proof}
By assumption, the graph $\Gamma_\vp$ of $\vp$ is a semialgebraic subset of $M\times N$. On the other hand, $\Gamma_\vp$ is biholomorphic with $\Omega$, and hence it is a $\C$-analytic subset of the open set $\Omega\times N$. Thus, by Proposition~\ref{prop:holo-semialg}, $\Gamma_\vp$ is a Nash subset of $\Omega\times N$; i.e., $\vp$ is a Nash mapping (Remark~\ref{rem:nash-facts}(1)).
\qed
\end{proof}

Let us note an important special case:

\begin{corollary}
\label{cor:holo-semialg-global}
If $\vp:M\to N$ is a holomorphic semialgebraic mapping of $\C$-vector spaces, then $\vp$ is a $\C$-polynomial map.
\end{corollary}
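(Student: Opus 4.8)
The plan is to upgrade the conclusion of Proposition~\ref{prop:nash-map} from ``Nash'' to ``polynomial'' by analyzing the graph of $\vp$ as an algebraic set. Since $M$ is open and semialgebraic in itself, Proposition~\ref{prop:nash-map} applies directly and shows that $\vp$ is a Nash mapping on all of $M$; by Remark~\ref{rem:nash-facts}(1) its graph $\Gamma_\vp$ is therefore a Nash subset of $M\times N$.

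First I would observe that $\Gamma_\vp$ is irreducible: the projection $\pi_M:M\times N\to M$ restricts to a biholomorphism $\Gamma_\vp\to M\cong\C^n$, and $\C^n$ is irreducible as a $\C$-analytic set, so the same holds for $\Gamma_\vp$. Being an irreducible Nash subset of the whole space $M\times N$, Remark~\ref{rem:nash-facts}(5) then forces $\Gamma_\vp$ to be an irreducible $\C$-algebraic subset of $M\times N$. This is the key reduction: it trades the analytic/semialgebraic data of $\vp$ for a genuinely algebraic model of its graph.

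It remains to descend from ``the graph is algebraic'' to ``$\vp$ is polynomial''. I would consider the restriction $p:=\pi_M|_{\Gamma_\vp}:\Gamma_\vp\to M$. Since $\Gamma_\vp$ is a graph, $p$ is a bijective morphism of irreducible affine $\C$-varieties onto the smooth (hence normal) variety $M=\C^n$. Working in characteristic zero, bijectivity forces $p$ to be birational, so by Zariski's Main Theorem $p$ is in fact an isomorphism of algebraic varieties. Consequently its inverse $x\mapsto(x,\vp(x))$ is a morphism, and composing with the polynomial projection $\pi_N$ onto $N$ shows that each component of $\vp$ is a regular function on $\C^n$, i.e.\ a $\C$-polynomial.

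The main obstacle is precisely this last descent, namely ruling out that the algebraic graph of an entire map could fail to come from a polynomial map. In characteristic zero this is handled by the generic injectivity coming from bijectivity, together with normality of $\C^n$, via Zariski's Main Theorem. If one prefers to avoid invoking that theorem, I would instead argue field-theoretically: bijectivity gives $[\,\C(\Gamma_\vp):\C(x)\,]=1$, so each component $\vp_j$ agrees with a rational function $A_j(x)/B_j(x)$ as a holomorphic function on $\C^n$; since $\vp_j$ is entire and $\C[x]$ is a unique factorization domain, the reduced denominator must be a unit, whence $\vp_j\in\C[x]$.
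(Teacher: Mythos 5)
Your argument is correct and coincides with the paper's own proof through its key reduction: Proposition~\ref{prop:nash-map} makes $\Gamma_\vp$ a Nash subset of $M\times N$, irreducibility follows from the biholomorphism $\Gamma_\vp\cong M$, and Remark~\ref{rem:nash-facts}(5) then upgrades $\Gamma_\vp$ to an irreducible $\C$-algebraic subset of $M\times N$. The only divergence is the final descent from ``algebraic graph'' to ``polynomial map'': the paper concludes in one line by citing the Serre Algebraic Graph Theorem \cite[\S\,VII.16]{Loj}, whereas you reprove that theorem in this instance, either via Zariski's Main Theorem applied to the bijective (hence, in characteristic zero, birational) projection $\Gamma_\vp\to M$ onto the normal variety $\C^n$, or via the field-theoretic observation that $[\C(\Gamma_\vp):\C(x)]=1$ forces each component $\vp_j$ to be an entire rational function and therefore a polynomial. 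Both of your descents are sound --- in the second one the step ``reduced denominator is a unit'' rests on the Nullstellensatz for the irreducible factors of $B_j$, which is worth a sentence but standard --- so the net effect is a self-contained proof of the step the paper outsources, at the cost of invoking slightly heavier algebraic geometry.
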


\begin{proof}
By Proposition~\ref{prop:nash-map}, the graph $\Gamma_\vp$ of $\vp$ is a Nash subset of $M\times N$. On the other hand, $\Gamma_\vp$ is an irreducible $\C$-analytic subset of $M\times N$ (as it is biholomorphic to $M$). It follows that $\Gamma_\vp$ is an irreducible $\C$-algebraic subset of $M\times N$, by Remark~\ref{rem:nash-facts}(5). Therefore $\vp$ is a holomorphic mapping with a $\C$-algebraic graph, and hence a $\C$-polynomial map, by the Serre Algebraic Graph Theorem (see, e.g., \cite[\S\,VII.16]{Loj}).
\qed
\end{proof}
\medskip

Combining Proposition~\ref{prop:nash-map} with Theorem~\ref{thm:nash-map-image}, we get the following result.

\begin{theorem}
\label{thm:holo-semialg-map}
Let $\Omega$ be a semialgebraic open subset of $M$, let $\vp:\Omega\to N$ be a holomorphic semialgebraic mapping, and let $X$ be a Nash subset of $\Omega$.
Then, for every point $\xi\in X$ and an arbitrarily small open neighbourhood $U$ of $\xi$, there exists an arbitrarily small open neighbourhood $V$ of $\vp(\xi)$ such that $\overline{\vp(X\cap U)\cap V}^{HC(V)}$ is a Nash subset of $V$, of dimension equal to $\dim\vp(X\cap U)$.
In particular, the germ $\overline{(\vp(X\cap U))_{\vp(\xi)}}^{HC}$ is Nash, of dimension $\dim\vp(X\cap U)$.
\end{theorem}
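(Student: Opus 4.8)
The plan is to reduce Theorem~\ref{thm:holo-semialg-map} to Theorem~\ref{thm:nash-map-image} by first upgrading the hypothesis on $\vp$ from ``holomorphic semialgebraic'' to ``Nash.'' Indeed, the conclusion we wish to establish is verbatim the conclusion of Theorem~\ref{thm:nash-map-image}, except that there the mapping is assumed to be a Nash map, whereas here it is merely assumed to be a holomorphic semialgebraic map. So the entire content of the proof is the observation that these two hypotheses coincide on an open semialgebraic domain, which is precisely what Proposition~\ref{prop:nash-map} provides.

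Concretely, I would proceed as follows. First, since $\Omega$ is an open semialgebraic subset of $M$ and $\vp:\Omega\to N$ is a holomorphic semialgebraic mapping, Proposition~\ref{prop:nash-map} applies directly and yields that $\vp$ is a Nash mapping. At this point every hypothesis of Theorem~\ref{thm:nash-map-image} is met: we have a Nash map $\vp:\Omega\to N$ on an open set $\Omega\subset M$ and a Nash subset $X$ of $\Omega$. I would then simply invoke Theorem~\ref{thm:nash-map-image}: for every $\xi\in X$ and every sufficiently small open neighbourhood $U$ of $\xi$, there is an arbitrarily small open neighbourhood $V$ of $\vp(\xi)$ with $\overline{\vp(X\cap U)\cap V}^{HC(V)}$ a Nash subset of $V$ of dimension $\dim\vp(X\cap U)$, and in particular the germ $\overline{(\vp(X\cap U))_{\vp(\xi)}}^{HC}$ is Nash of dimension $\dim\vp(X\cap U)$. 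This is exactly the asserted conclusion, so the proof terminates.

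I do not expect any genuine obstacle here, since the theorem is essentially a packaging result that combines two previously established facts. The only point that requires a moment's care is verifying that the hypotheses of Proposition~\ref{prop:nash-map} are genuinely in force: that statement requires $\Omega$ to be open and semialgebraic, both of which are among the standing assumptions of the present theorem. One should also note that Theorem~\ref{thm:nash-map-image} is stated for a general Nash map on an open set, with no semialgebraicity requirement on the domain, so once Nashness of $\vp$ is secured there is nothing further to check. Thus the proof is a two-line citation: \emph{apply Proposition~\ref{prop:nash-map} to conclude $\vp$ is Nash, then apply Theorem~\ref{thm:nash-map-image}.} The substantive mathematical work has already been carried out in Section~\ref{sec:nash-image} (the local Chevalley-type result and its Remmert-rank ingredient) and in Proposition~\ref{prop:nash-map} (via Proposition~\ref{prop:holo-semialg} and the characterization of Nash maps by Nash graphs).
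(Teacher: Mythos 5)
Your proposal is correct and matches the paper exactly: the paper introduces this theorem with the sentence ``Combining Proposition~\ref{prop:nash-map} with Theorem~\ref{thm:nash-map-image}, we get the following result,'' which is precisely your two-step argument. Nothing further is needed.
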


\subsection{Semialgebraic stratification by holomorphic closure dimension}
\label{subsec:HC-stratification}

\begin{lemma}
\label{lem:smooth}
Let $S$ be an $\R$-analytic submanifold of an open subset of $M$. If $X$ is a $\C$-analytic subset of an open set $\Omega\subset M$, and $X_\xi\supset S_\xi$ at some point $\xi\in\bS$, then $X$ contains the closure of a connected component of $S\cap\Omega$.
\end{lemma}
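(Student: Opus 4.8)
The plan is to propagate the local containment $S_\xi\subset X_\xi$ along $S$ by analytic continuation, and then use the closedness of $X$ in $\Omega$ to pass to the closure. Consider the set
\[
E:=\{x\in S\cap\Omega:\ S\cap W\subset X\ \text{ for some open neighbourhood }W\text{ of }x\}\,,
\]
that is, the set of points of $S\cap\Omega$ near which $S$ is locally contained in $X$. First I would observe that $E$ is nonempty: since $\xi\in\bS$ and $S_\xi\subset X_\xi$, there is an open neighbourhood $W_0$ of $\xi$ with $\varnothing\neq S\cap W_0\subset X$, and then every point of $S\cap W_0$ lies in $E$ (note $\xi$ itself need not belong to $S$, but $S\cap W_0$ is nonempty because $\xi\in\bS$). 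The goal then reduces to showing that $E$ is both open and closed in $S\cap\Omega$: once this is established, $E$ is a union of connected components of $S\cap\Omega$, so it contains at least one whole component $C$ with $C\subset X$; as $X$ is closed in $\Omega$, the closure of $C$ in $\Omega$ is contained in $X$, which is exactly the assertion.

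Openness of $E$ is immediate from the definition. If $x\in E$ is witnessed by a neighbourhood $W$ with $S\cap W\subset X$, then $W$ witnesses membership in $E$ for every point of $S\cap W$, so $S\cap W\subset E$ and $E$ is open in $S$.

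The main point, and the step I expect to be the genuine obstacle, is the closedness of $E$ in $S\cap\Omega$, which I would handle by the identity principle for real-analytic functions. Let $x_0\in S\cap\Omega$ be adherent to $E$, and choose a connected open neighbourhood $W_1$ of $x_0$ in $\Omega$ small enough that $S\cap W_1$ is connected and that $X\cap W_1=\{h_1=\dots=h_m=0\}$ for some $h_1,\dots,h_m$ holomorphic on $W_1$. Each $h_i$, being holomorphic, is real-analytic on $W_1$ viewed as a real manifold, so its restriction $h_i|_{S\cap W_1}$ is a real-analytic function on the connected real-analytic manifold $S\cap W_1$. Since $E$ is open in $S$ and $x_0$ lies in its closure, the set $E\cap W_1$ is a nonempty open subset of $S\cap W_1$, and every $h_i$ vanishes on it (because $S$ is locally contained in $X$ at each point of $E$). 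By the identity principle on the connected real-analytic manifold $S\cap W_1$, each $h_i|_{S\cap W_1}$ vanishes identically; hence $S\cap W_1\subset X$ and in particular $x_0\in E$.

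This proves $E$ is open and closed in $S\cap\Omega$, completing the argument along the lines above. The delicate ingredients to make fully rigorous are the existence of a connected neighbourhood on which $S$ is connected and $X$ is cut out by finitely many holomorphic functions, and the legitimacy of restricting those functions to obtain real-analytic functions on $S$ to which the identity principle applies; both are standard but should be stated carefully, since they are where the real-analytic structure of $S$ and the complex-analytic structure of $X$ interact.
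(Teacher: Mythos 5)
Your argument is correct and is exactly what the paper intends: its entire proof is the single line ``Identity Principle for $\R$-analytic functions,'' and your open--closed argument on the set $E$, with the identity principle applied to the restrictions $h_i|_{S\cap W_1}$ on a connected piece of $S$, is the standard way to spell that out. The only points to state carefully are the ones you already flag (taking $W\subset\Omega$ throughout, and reading ``closure'' as closure in $\Omega$ since $X$ is only closed there); no gap.
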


\begin{proof}
Identity Principle for $\R$-analytic functions.
\qed
\end{proof}

\begin{lemma}
\label{lem:alg-closure}
Let $S$ be a connected $\R$-analytic submanifold of an open subset of $M$. There exists a unique smallest $\C$-algebraic subset $X$ of $M$ containing $\bS$ and such that, for every $\xi\in\bS$, $X_\xi$ is the smallest $\C$-algebraic germ containing $S_\xi$. Moreover, $X$ is irreducible.
\end{lemma}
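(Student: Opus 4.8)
The plan is to take $X$ to be the complex Zariski closure of $\bS$ in $M$, i.e.\ the smallest $\C$-algebraic subset of $M$ containing $\bS$ (this exists because the ideal of all $\C$-polynomials vanishing on $\bS$ is finitely generated, so its zero set is the minimal algebraic set containing $\bS$), and to show that this single global set simultaneously realises the smallest algebraic germ at every point. That $X$ contains $\bS$ and is the smallest $\C$-algebraic set doing so is immediate from its definition; the real content of the lemma is the germwise minimality, and this is where connectedness of $S$ enters decisively.

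First I would establish the germ property. Fix $\xi\in\bS$; since $X\supseteq\bS\supseteq S$ we have $X_\xi\supseteq S_\xi$. To see that $X_\xi$ is the \emph{smallest} algebraic germ containing $S_\xi$, let $W$ be any $\C$-algebraic subset of $M$ with $W_\xi\supseteq S_\xi$ (every $\C$-algebraic germ at $\xi$ arises this way). Because $S$ is a connected $\R$-analytic submanifold and $\xi\in\bS$, Lemma~\ref{lem:smooth} applied with $\Omega=M$ forces $W$ to contain the closure of a connected component of $S\cap M=S$; as $S$ is connected this gives $W\supseteq\bS$, and minimality of the Zariski closure yields $X\subseteq W$, whence $X_\xi\subseteq W_\xi$. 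Thus $X_\xi$ is contained in every algebraic germ containing $S_\xi$, so it is the smallest such germ; this argument proves both the existence of the smallest germ and the asserted property of $X$ at once. Uniqueness and minimality are then formal: any $\C$-algebraic $X'$ meeting the two stated conditions contains $\bS$, hence contains $X$, while $X$ itself satisfies both conditions, so $X$ is the unique smallest such set.

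For irreducibility I would decompose $X=X_1\cup\dots\cup X_r$ into irreducible $\C$-algebraic components and pick any point $\xi\in S$. Since $S$ is a manifold, the germ $S_\xi$ is irreducible as an $\R$-analytic germ, and $S_\xi\subseteq X_\xi=\bigcup_{i:\,\xi\in X_i}(X_i)_\xi$, a finite union of ($\R$-analytic) germs; irreducibility of $S_\xi$ then forces $S_\xi\subseteq(X_i)_\xi$ for a single index $i$. Applying Lemma~\ref{lem:smooth} once more gives $X_i\supseteq\bS$, and since $X_i\subseteq X$ is a component of the Zariski closure, minimality of $X$ yields $X=X_i$. Hence $X$ is irreducible.

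The one genuinely non-formal step, and the step I expect to be the main obstacle, is the germ property: a priori the germ at $\xi$ of a \emph{global} Zariski closure could be strictly larger than the smallest \emph{local} algebraic germ containing $S_\xi$. The crux is that the Identity Principle, packaged as Lemma~\ref{lem:smooth}, upgrades a purely local containment $W_\xi\supseteq S_\xi$ into the global containment $W\supseteq\bS$; this propagation is legitimate only because $S$ is connected (for a disconnected $S$ it fails, and the statement itself becomes false). Once this is in place, everything else is a formal consequence of the universal property of the Zariski closure, so I anticipate no further difficulty.
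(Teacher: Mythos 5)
Your proof is correct and follows essentially the same route as the paper: both constructions yield the same minimal $\C$-algebraic set, and both rest entirely on Lemma~\ref{lem:smooth} to upgrade the local containment $W_\xi\supseteq S_\xi$ to the global containment $W\supseteq\bS$ (the paper defines $X$ as the minimal algebraic set whose germ at a single point contains $S_\xi$ and then shows this is independent of the point, whereas you define $X$ as the Zariski closure of $\bS$ and verify germwise minimality --- two sides of the same argument). The only real divergence is the irreducibility step, where the paper derives a contradiction from a decomposition $X=X_1\cup X_2$ by locating a point of $\bS$ in $X_1\setminus X_2$, while you use irreducibility of the manifold germ $S_\xi$ to place $S_\xi$ inside a single irreducible component; both are valid one-line applications of Lemma~\ref{lem:smooth}.
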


\begin{proof}
For every $\xi\in\bS$, define $X^\xi$ as the minimal (with respect to inclusion) element of the family of sets
\[
\{Z\subset M: Z\ \mathrm{is\ }\C\mathrm{-algebraic\ in\ }M,\ \mathrm{and\ }Z_\xi\supset S_\xi\}\,,
\]
which is well-defined, by Noetherianity. By Lemma~\ref{lem:smooth}, each $X^\xi$ contains $\bS$. Given any $\xi_1,\xi_2\in\bS$, we thus have $(X^{\xi_1})_{\xi_2}\supset S_{\xi_2}$, hence $X^{\xi_1}\supset X^{\xi_2}$, by minimality of $X^{\xi_2}$. Therefore $X^{\xi_1}=X^{\xi_2}$, and so the set $X:=X^\xi$ is independent of the choice of $\xi$. By construction, $X$ has the required properties.

To prove the final assertion of the lemma, suppose that $X$ is a union of two proper $\C$-algebraic subsets $X_1$ and $X_2$. Then $\bS\not\subset X_1$, $\bS\not\subset X_2$, and $\bS\not\subset X_1\cap X_2$, by minimality of $X$. But $\bS\subset X_1\cup X_2$, hence there exists a point $\xi_0\in\bS$, such that $\xi_0\in X_1\setminus X_2$, and so
$(X_1)_{\xi_0}\supset S_{\xi_0}$. Then $X_1\supset\bS$, by Lemma~\ref{lem:smooth}; a contradiction.
\qed
\end{proof}

\begin{proposition}
\label{prop:HC-closure-smooth}
Let $S$ be a connected semialgebraic subset of $M$ and an $\R$-analytic submanifold of an open subset of $M$, and let $X$ be the unique $\C$-algebraic subset of $M$ from the above lemma. Then, at every point $\xi\in\bS$, the holomorphic closure $\overline{S_\xi}^{HC}$ is a union of some analytic-irreducible components of $X_\xi$. In particular, the holomorphic closure dimension is constant on $S$.
\end{proposition}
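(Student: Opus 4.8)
The plan is to compare the holomorphic closure with the algebraic germ $X_\xi$ dimension by dimension. First, since $X_\xi$ is itself a $\C$-analytic germ containing $S_\xi$, minimality of the holomorphic closure gives the inclusion $\overline{S_\xi}^{HC}\subset X_\xi$. By Proposition~\ref{prop:HC-nash} the germ $\overline{S_\xi}^{HC}$ is Nash, so I may write its decomposition into finitely many analytic-irreducible components $\overline{S_\xi}^{HC}=H^1\cup\dots\cup H^p$. Since $X$ is irreducible $\C$-algebraic (Lemma~\ref{lem:alg-closure}), the germ $X_\xi$ is of pure dimension $\dim X$. Thus, once I show that every component $H^\beta$ has dimension exactly $\dim X$, purity forces each $H^\beta$, being an irreducible germ of top dimension sitting inside $X_\xi$, to coincide with an entire analytic-irreducible component of $X_\xi$; this is the first assertion. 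The constancy of the holomorphic closure dimension on $S$ then follows immediately, since $\dim\overline{S_\xi}^{HC}=\dim X$ at every $\xi\in S$.

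The heart of the argument is therefore to prove $\dim H^\beta=\dim X$ for a fixed component $H=H^{\beta_0}$. I would fix a small neighbourhood $U$ of $\xi$ together with a representative $\widetilde H=\widetilde H^1\cup\dots\cup\widetilde H^p\subset X$ of $\overline{S_\xi}^{HC}$ with $S\cap U\subset\widetilde H$. The key observation is that minimality of the holomorphic closure prevents $S_\xi$ from being absorbed by the remaining components: if one had $S\cap U\subset\bigcup_{\beta\neq\beta_0}\widetilde H^\beta$, then $\bigcup_{\beta\neq\beta_0}H^\beta$ would be a strictly smaller $\C$-analytic germ containing $S_\xi$, a contradiction. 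Hence there is a point $\xi'\in S\cap U$ lying on $\widetilde H^{\beta_0}$ but on none of the (closed) sets $\widetilde H^\beta$, $\beta\neq\beta_0$. Near such $\xi'$ the representative $\widetilde H$ coincides with $\widetilde H^{\beta_0}$, so that $S_{\xi'}\subset\widetilde H^{\beta_0}_{\xi'}$; note that $\xi'\in S$ is a manifold point, whether or not $\xi$ itself lies in $S$.

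It remains to transfer dimension information from $\xi'$ back to $X$. Let $Z_H$ be an irreducible $\C$-algebraic set having $\widetilde H^{\beta_0}$ as an analytic-irreducible component of $Z_H\cap\Omega$, as provided by Remark~\ref{rem:nash-facts}(3) (for instance its Zariski closure); then $\dim Z_H=\dim H$. Since $\widetilde H^{\beta_0}\subset X$, we have $Z_H\subset X$ and hence $\dim Z_H\leq\dim X$. Conversely, from $S_{\xi'}\subset\widetilde H^{\beta_0}_{\xi'}\subset(Z_H)_{\xi'}$ and the minimality property of $X$ at $\xi'\in\bS$ supplied by Lemma~\ref{lem:alg-closure}, I obtain $X_{\xi'}\subset(Z_H)_{\xi'}$, whence $\dim X=\dim X_{\xi'}\leq\dim Z_H$. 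Therefore $\dim Z_H=\dim X$, and as $Z_H\subset X$ with $X$ irreducible this forces $Z_H=X$ and $\dim H=\dim X$, as required.

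I expect the only delicate point to be the selection of the witness point $\xi'$ in the second paragraph: one must be sure that the part of $S$ escaping the other components is genuinely nonempty, and that at such a point the germ of $S$ itself (not merely the point $\xi'$) lies inside the single branch $\widetilde H^{\beta_0}$. Both are guaranteed by minimality of the holomorphic closure together with the closedness of the components $\widetilde H^\beta$; if one prefers to phrase the inclusion $S_{\xi'}\subset\widetilde H^{\beta_0}_{\xi'}$ via analytic continuation, it can equally be justified by the Identity Principle in the form of Lemma~\ref{lem:smooth}. Everything else is a routine dimension count.
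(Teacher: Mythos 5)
Your argument is correct, but it takes a genuinely different route from the paper's. The paper works entirely at the level of germs at the given point $\xi$: it takes a Nash representative $Y$ of $\overline{S_\xi}^{HC}$, produces via Remark~\ref{rem:nash-facts}(3) a $\C$-algebraic set $Z$ such that $Y_\xi$ is a union of analytic-irreducible components of $Z_\xi$, and then exploits the sandwich $Y_\xi\subset X_\xi\subset Z_\xi$ (the second inclusion coming from Lemma~\ref{lem:smooth} and the minimality of $X$): each component of $Y_\xi$ is a full component of $Z_\xi$ squeezed between a component of $X_\xi$ and one of $Z_\xi$, hence equals a component of $X_\xi$, and the constancy of dimension is only read off at the very end from the irreducibility of $X$. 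You instead reduce everything to a dimension count: you show each branch $H^{\beta_0}$ of the holomorphic closure has dimension exactly $\dim X$ by selecting a witness point $\xi'\in S$ near $\xi$ at which $S$ sits inside the single branch $\widetilde H^{\beta_0}$ (justified correctly by minimality of the holomorphic closure and closedness of the other branches), then invoking the minimality of $X$ at $\xi'$ from Lemma~\ref{lem:alg-closure} to force $Z_H=X$; pure-dimensionality of $X_\xi$ then upgrades the dimension equality to the identification of $H^{\beta_0}$ with a whole component of $X_\xi$. Both proofs use the same toolbox (Proposition~\ref{prop:HC-nash}, Remark~\ref{rem:nash-facts}(3), the minimality in Lemma~\ref{lem:alg-closure}, and equidimensionality of germs of an irreducible algebraic set); the paper's sandwich is shorter and needs no witness point, while yours yields the stronger byproduct that the Zariski closure of every branch of $\overline{S_\xi}^{HC}$ is all of $X$. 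The one step worth fleshing out is the inclusion $Z_H\subset X$: Remark~\ref{rem:nash-facts}(3) provides \emph{some} irreducible algebraic $Z_H$ having $\widetilde H^{\beta_0}$ as a component of $Z_H\cap U$, and one should observe that any such $Z_H$ is necessarily the Zariski closure of $\widetilde H^{\beta_0}$ (it is irreducible, contains it, and has the same dimension), hence lies in the algebraic set $X$.
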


\begin{proof}
Let $\xi$ be a point of $\bS$. By Proposition~\ref{prop:HC-nash}, $\overline{S_\xi}^{HC}=\overline{\bS_\xi}^{HC}$ is a Nash germ. Hence we can choose an open neighbourhood $U$ of $\xi$ in $M$ such that $\overline{S_\xi}^{HC}$ has a Nash representative $Y$ in $U$, satisfying $Y\supset\bS\cap U$. Let $Z$ be a $\C$-algebraic subset of $M$, for which $Y$ is a union of analytic-irreducible components of $Z\cap U$. Then $Z_\xi\supset Y_\xi\supset S_\xi$, hence $Z\supset\bS$, by Lemma~\ref{lem:smooth}. By minimality of $X$, $Z\supset X$. Hence $Z_\xi\supset X_\xi$, and consequently every analytic-irreducible component $X^j_\xi$ of $X_\xi$ is contained in an analytic-irreducible component $Z^k_\xi$ of $Z_\xi$. On the other hand, $X_\xi\supset S_\xi$, so by definition of the holomorphic closure, $X_\xi\supset Y_\xi$. Therefore, every analytic-irreducible component of $Y_\xi$ is contained in an analytic-irreducible component of $X_\xi$. As $Y_\xi$ is a union of some analytic-irreducible components of $Z_\xi$, it follows that $Y_\xi$ is a union of some components $X^j_\xi$ of $X_\xi$, as required.
Finally, the irreducibility of $X$ implies that all the analytic-irreducible components of a germ of $X$ at any point $\xi\in X$ are of dimension $\dim X$.
\qed
\end{proof}

For a semialgebraic set $S$ in $M$ of dimension $d$, denote by $\mathrm{Reg}_d(S)$ the locus of points $\xi\in S$ for which $S_\xi$ is a germ of a $d$-dimensional $\R$-analytic manifold.

\begin{corollary}
\label{cor:CR-mnfld}
Let $S$ be a $d$-dimensional semialgebraic set in $M$.
If $\mathrm{Reg}_d(S)$ is connected and dense in $S$, then there exists an irreducible $\C$-algebraic subset $X$ in $M$ such that $X\supset\bS$ and $\dim_{HC}S_\xi=\dim X$ for every $\xi\in\overline{S}$.
In particular, this is the case if $S$ is a semialgebraic CR manifold.
\end{corollary}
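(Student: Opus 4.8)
The plan is to reduce the statement to Proposition~\ref{prop:HC-closure-smooth} by applying that proposition to the regular locus $\mathrm{Reg}_d(S)$, and then to transfer the resulting dimension information from $\mathrm{Reg}_d(S)$ to all of $S$ by a squeezing argument on holomorphic closure dimensions.

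First I would set $R:=\mathrm{Reg}_d(S)$ and verify that $R$ meets the hypotheses of Proposition~\ref{prop:HC-closure-smooth}. By its very definition, $R$ is an $\R$-analytic submanifold of an open subset of $M$, since each of its points has a neighbourhood in which $S$ is a $d$-dimensional $\R$-analytic manifold; it is semialgebraic, because the locus where a semialgebraic set is locally a $d$-dimensional $\R$-analytic manifold germ is itself semialgebraic (cf.\ Remark~\ref{rem:SAG-facts}); and it is connected by hypothesis. Moreover, density of $R$ in $S$ gives $\bS=\overline{R}$: indeed $R\subset S$ forces $\overline{R}\subset\bS$, while $S\subset\overline{R}$ together with closedness of $\overline{R}$ forces $\bS\subset\overline{R}$. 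Applying Proposition~\ref{prop:HC-closure-smooth} to $R$ then yields an irreducible $\C$-algebraic subset $X\subset M$ with $X\supset\overline{R}=\bS$ such that, for every $\xi\in\bS$, the germ $\overline{R_\xi}^{HC}$ is a union of some analytic-irreducible components of $X_\xi$. Since $X$ is irreducible, every such component has dimension $\dim X$, and $\overline{R_\xi}^{HC}$ is nonempty (as $R_\xi\neq\varnothing$ for $\xi\in\overline{R}$), whence $\dim_{HC}R_\xi=\dim X$ for all $\xi\in\bS$.

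It remains to replace $R$ by $S$ in the dimension count, which I would do by squeezing. Fix $\xi\in\bS$. On one hand, $R\subset S$ gives $R_\xi\subset S_\xi$, hence $\overline{R_\xi}^{HC}\subset\overline{S_\xi}^{HC}$, so $\dim X=\dim_{HC}R_\xi\leq\dim_{HC}S_\xi$. On the other hand, $X\supset\bS\supset S$ gives $S_\xi\subset X_\xi$, hence $\overline{S_\xi}^{HC}\subset X_\xi$, so $\dim_{HC}S_\xi\leq\dim X_\xi=\dim X$, again using irreducibility of $X$. Combining the two inequalities gives $\dim_{HC}S_\xi=\dim X$ for every $\xi\in\bS$, which is the desired conclusion.

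Finally, for the ``in particular'' clause I would observe that a semialgebraic CR manifold $S$ is a semialgebraic submanifold of pure real dimension $d$, hence (a semialgebraic smooth submanifold being Nash) an $\R$-analytic submanifold; consequently every point of $S$ lies in $\mathrm{Reg}_d(S)$, i.e.\ $\mathrm{Reg}_d(S)=S$. This set is trivially dense in $S$, and is connected once $S$ is taken connected, so the hypotheses are met. The main obstacle I anticipate is not the squeezing, which is routine, but rather the two structural facts about $\mathrm{Reg}_d(S)$ invoked in the first step—its semialgebraicity and its being an $\R$-analytic submanifold of an open set; both are standard in semialgebraic geometry but should be cited carefully. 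The connectedness needed in the CR case is the one point where some care (or an implicit connectedness convention) is required.
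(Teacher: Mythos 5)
Your proposal is correct and follows essentially the same route as the paper: apply Lemma~\ref{lem:alg-closure} and Proposition~\ref{prop:HC-closure-smooth} to $\mathrm{Reg}_d(S)$, then use density of $\mathrm{Reg}_d(S)$ in $S$ (so $\overline{\mathrm{Reg}_d(S)}=\bS$) to transfer the conclusion to $S$. Your explicit squeezing step and your remark about the implicit connectedness convention in the CR case only spell out what the paper leaves tacit.
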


\begin{proof}
Let $X$ be the unique irreducible $\C$-algebraic set for $\mathrm{Reg}_d(S)$ from Lemma~\ref{lem:alg-closure}.
Then, by Proposition~\ref{prop:HC-closure-smooth}, $X$ has the required properties, because $\overline{S}=\overline{\mathrm{Reg}_d(S)}\subset\overline{X}=X$.
\qed
\end{proof}

\begin{remark}
\label{rem:CR-mnfld}
Notice that the above result does not carry over to the semianalytic case, where it is possible to have a smooth connected set with non-constant holomorphic closure dimension (see Examples 6.1 and 6.3 of \cite{AS}).
\end{remark}
\medskip

We conclude this section with the proof of the semialgebraic stratification by holomorphic closure dimension.

\subsubsection*{Proof of Theorem~\ref{thm:HC-stratification}}

Let $\xi$ be a point of $S$, and let $Y$ be a $\C$-analytic subset of a neighbourhood $U$ of $\xi$ in $M$, such that $Y_\xi=\overline{S_\xi}^{HC}$. After shrinking $U$ if needed, we can assume that $Y\cap U\supset S\cap U$, and $\dim Y_x\leq\dim Y_\xi$ for all $x\in U$. Hence $Y_x\supset\overline{S_x}^{HC}$, for every $x\in U$, and so
\[
\dim\overline{S_x}^{HC}\leq\dim Y_x\leq\dim Y_\xi=\dim\overline{S_\xi}^{HC}\,.
\]
This proves closedness of $\mathcal{S}^d(S)$, for $d\in\N$.

For the proof of semialgebraicity of $\mathcal{S}^d(S)$, consider a finite partition $\{S_\iota\}_{\iota \in I}$
of $S$ into semialgebraic sets, each of which is a connected $\R$-analytic manifold (Remark~\ref{rem:SAG-facts}(3)).
For each $\iota\in I$, there is an irreducible $\C$-algebraic set $X_\iota$ satisfying the conclusion of Lemma~\ref{lem:alg-closure} applied to $S_\iota$.
Then, for every $x\in\overline{S_\iota}$, $\dim_{HC}(S_\iota)_x=\dim X_\iota$, by Proposition~\ref{prop:HC-closure-smooth}.
For $x\in S$, let $I(x)=\{\iota \in I : x\in \overline{S_\iota} \}$.
Then $\overline{S_x}^{HC}=\bigcup_{\iota\in I(x)}\overline{(S_\iota)_x}^{HC}$, hence $\dim_{HC}S_x=\max \{ \dim X_\iota : \iota\in I(x)\}$.
Thus the holomorphic closure dimension of $S_x$ only depends on $I(x)$. But, for any $I'\subset I$, the set $\{ x\in S: I(x)=I'\}$ is semialgebraic, by Remark~\ref{rem:SAG-facts}(1).
\qed

\section{Preimages under holomorphic semialgebraic mappings}
\label{sec:preimages}

We will now study the relationship between the complex dimensions of semialgebraic sets and those of their preimages, with a view toward applications in CR geometry.

Let $\Omega$ and $\Delta$ be open connected subsets of $M$ and $N$ respectively, and let $\vp:\Omega\to\Delta$ be a holomorphic mapping.
Denote by $\lambda$ the generic fibre dimension of $\vp$, and set $\Omega^{(\lambda)}:=\{x\in\Omega:\fbd_x\vp=\lambda\}$.
Assume that $\vp$ is dominant; i.e., $\lambda=\dim M-\dim N$.

\begin{proposition}
\label{prop:HC-preimage-equidim}
Let $S$ be an arbitrary subset of $\vp(\Omega)$. Suppose that $\vp$ is equidimensional; i.e., $\fbd_x\vp=\lambda$ for all $x\in\Omega$. Then, for all $\eta\in S$ and $\xi\in\vp^{-1}(\eta)$,
\[
\dim_{HC}(\vp^{-1}(S))_\xi=\dim_{HC}S_\eta+\lambda\,.
\]
\end{proposition}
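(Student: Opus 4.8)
The plan is to compare $\overline{(\vp^{-1}(S))_\xi}^{HC}$ with the preimage of the holomorphic closure of $S_\eta$. Fix a $\C$-analytic representative $Y$ of $\overline{S_\eta}^{HC}$, so that $d:=\dim Y=\dim_{HC}S_\eta$, and note that by equidimensionality $\vp$ is open (Remmert's open mapping theorem; see \cite{Loj}). I expect the upper bound to be routine and the lower bound to be the real content.

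For the upper bound, I would argue that near $\xi$ we have $\vp^{-1}(S)\subseteq\vp^{-1}(Y)$ (since $S_\eta\subseteq Y_\eta$), and $\vp^{-1}(Y)$ is $\C$-analytic; hence $\overline{(\vp^{-1}(S))_\xi}^{HC}\subseteq(\vp^{-1}(Y))_\xi$ and $\dim_{HC}(\vp^{-1}(S))_\xi\leq\dim_\xi\vp^{-1}(Y)$. Over $\eta$ the fibre of $\vp|_{\vp^{-1}(Y)}$ is the full fibre $\vp^{-1}(\eta)$, of dimension $\lambda$, while the image lies in $Y$; applying the standard bound $\fbd_\xi\geq\dim_\xi-\dim\overline{\mathrm{image}}$ to each analytic-irreducible component of $\vp^{-1}(Y)$ through $\xi$ then gives $\dim_\xi\vp^{-1}(Y)\leq\lambda+d$, which is the inequality $\leq$.

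For the lower bound my plan is to reduce to the finite case by a generic slicing, thereby circumventing the obstruction below. I would choose a generic affine subspace $H\ni\xi$ of codimension $\lambda$ subject to two genericity conditions. First, writing $V:=\overline{(\vp^{-1}(S))_\xi}^{HC}$ (which contains $\vp^{-1}(\eta)$, so $\dim V\geq\lambda$), a generic section drops dimension by the codimension, so $\dim_\xi(V\cap H)=\dim V-\lambda$; since $\vp^{-1}(S)\cap H\subseteq V\cap H$ inside $H$, this yields the slicing inequality $\dim_{HC}(\vp^{-1}(S)\cap H)_\xi\leq\dim_{HC}(\vp^{-1}(S))_\xi-\lambda$. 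Second, a generic $H$ meets $\vp^{-1}(\eta)$ only in dimension $0$ at $\xi$, so by upper semicontinuity of fibre dimension the restriction $\psi:=\vp|_{H\cap U}$ has $0$-dimensional fibres near $\xi$; being equidimensional with discrete fibres between manifolds of equal dimension, $\psi$ is open and, after shrinking $U$, finite. As $\vp^{-1}(S)\cap H=\psi^{-1}(S)$, it remains to prove the $\lambda=0$ statement $\dim_{HC}(\psi^{-1}(S))_\xi\geq d$.

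The finite case is where the Osgood phenomenon of Remark~\ref{rem:false-in-general} must be dodged: for a general holomorphic map one cannot control the dimension of a holomorphic closure through images, but finiteness restores this control. Suppose $\dim_{HC}(\psi^{-1}(S))_\xi<d$ and let $Q$ represent $\overline{(\psi^{-1}(S))_\xi}^{HC}$, so $\dim Q<d$ and $\psi^{-1}(S)\subseteq Q$ near $\xi$. Since $\psi$ is finite, $\psi(Q)$ is $\C$-analytic of dimension $\dim Q<d$ (Remmert's proper mapping theorem, \cite{Loj}), and openness gives $\psi(\psi^{-1}(S))\supseteq S$ near $\eta$; hence $S_\eta\subseteq\psi(Q)_\eta$, an analytic germ of dimension $<d$, contradicting the minimality of $\overline{S_\eta}^{HC}$. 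Thus $\dim_{HC}(\psi^{-1}(S))_\xi\geq d$, and combining with the slicing inequality yields $\dim_{HC}(\vp^{-1}(S))_\xi\geq d+\lambda$, completing the proof. The main obstacle, as indicated, is precisely that image dimensions are uncontrollable for general holomorphic maps; the decisive device is the reduction by generic slicing to a \emph{finite} map, for which Remmert's proper mapping theorem makes the dimension bookkeeping exact.
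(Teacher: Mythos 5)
Your argument is correct, and your upper bound is essentially the paper's (the paper phrases it via Remmert's rank theorem applied to $\vp|_{\vp^{-1}(Y)}$, whose fibres are full $\lambda$-dimensional fibres of $\vp$; you use the fibre-dimension inequality componentwise — either works). For the lower bound you take a genuinely different route. The paper works directly with a representative $X$ of the holomorphic closure $\overline{(\vp^{-1}(S))_\xi}^{HC}$ in $M$: since $\vp^{-1}(S)$ is a union of full $\lambda$-dimensional fibres, it lies in the $\C$-analytic set $X^{(\lambda)}=\{x\in X:\fbd_x(\vp|_X)=\lambda\}$, so minimality of the holomorphic closure forces $X_\xi=(X^{(\lambda)})_\xi$; then $\vp|_X$ is equidimensional, Remmert's rank theorem makes $\vp(X)$ locally $\C$-analytic of dimension $\dim X_\xi-\lambda$ near $\eta$, and openness of $\vp$ gives $S_\eta\subset(\vp(X))_\eta$, the desired contradiction. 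You instead slice by a generic affine $H\ni\xi$ of codimension $\lambda$ to reduce to a finite map $\psi=\vp|_{H\cap U}$ and invoke Remmert's proper mapping theorem. Both devices serve the same purpose — replacing the uncontrollable image of a general holomorphic map (the Osgood phenomenon of Remark~\ref{rem:false-in-general}) by the image of a map of constant fibre dimension — and both exploit the saturation $S=\vp(\vp^{-1}(S))$ together with openness of $\vp$. The paper's version has the advantage of being the template reused in Propositions~\ref{prop:HC-preimage-ineq}--\ref{prop:HC-preimage-bis}, where equidimensionality fails but the $X^{(\lambda)}$ trick still applies component by component; your version is more elementary (classical facts about finite maps replace the rank theorem and the analyticity of the level set of the fibre dimension), but it is tied to the equidimensional hypothesis, since both of your genericity requirements — that $\dim_\xi(V\cap H)=\dim_\xi V-\lambda$ and that $\xi$ be isolated in $H\cap\vp^{-1}(\eta)$ — use $\fbd_\xi\vp=\lambda$ exactly. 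One point to make explicit: the two genericity conditions are imposed on different germs, so you should record that each fails only on a proper closed subset of the Grassmannian of codimension-$\lambda$ subspaces through $\xi$, whence a common $H$ exists; with that noted, the argument is complete.
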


\begin{proof}
Given $\eta\in S$, we can choose an open neighbourhood $V$ of $\eta$ in $N$, such that the holomorphic closure $\overline{S_\eta}^{HC}$ has a representative $Y$ $\C$-analytic in $V$. Then $(\vp^{-1}(Y))_\xi\supset(\vp^{-1}(S))_\xi$, for every $\xi\in\vp^{-1}(\eta)$, and $\dim(\vp^{-1}(Y))_\xi=\dim{Y_\eta}+\lambda$, by Theorem~\ref{thm:rem}, as the fibre dimension of $\vp|_{\vp^{-1}(Y)}$ is constantly $\lambda$. Hence $\dim_{HC}(\vp^{-1}(S))_\xi\leq\dim_{HC}S_\eta+\lambda$, at every $\xi\in\vp^{-1}(\eta)$.

Next, suppose there exists a point $\xi\in\vp^{-1}(\eta)$ for which $\dim_{HC}(\vp^{-1}(S))_\xi<\dim_{HC}S_\eta+\lambda$. Choose an open neighbourhood $U$ of $\xi$ in $\Omega$ such that $\overline{(\vp^{-1}(S))_\xi}^{HC}$ has a representative $X$ $\C$-analytic in $U$, and such that $X\supset\vp^{-1}(S)\cap U$. Set $X^{(\lambda)}:=\{x\in X:\fbd_x\vp|_X=\lambda\}$. Then $X^{(\lambda)}$ is a $\C$-analytic subset of $U$, and $(X^{(\lambda)})_\xi\supset(\vp^{-1}(S))_\xi$, because $\vp^{-1}(S)$ consists of $\lambda$-dimensional fibres. By minimality of holomorphic closure, we have $(X^{(\lambda)})_\xi=X_\xi$, and hence we can replace $X$ with $X^{(\lambda)}$. Then the fibre dimension of $\vp$ is constant on $X$, so by Theorem~\ref{thm:rem}, $\vp(X)$ is $\C$-analytic in a neighbourhood of $\eta$, of dimension $\dim(\vp(X))_\eta=\dim X_\xi+\lambda$.

By assumption, $S=\vp(\vp^{-1}(S))$, and hence $\vp(X)\supset\vp(\vp^{-1}(S)\cap U)=S\cap\vp(U)$. Note that $\vp(U)$ is an open neighbourhood of $\eta$, because $\vp$ is an open mapping, by the Remmert Open Mapping Theorem (see, e.g., \cite{Loj}). Hence $(\vp(X))_\eta\supset S_\eta$, and so
\[
\dim_{HC}S_\eta\leq\dim(\vp(X))_\eta=\dim X_\xi-\lambda<(\dim_{HC}S_\eta+\lambda)-\lambda\,;
\]
a contradiction.
\qed
\end{proof}
\medskip

Let, as before, $\mathcal{S}^d(S)$ denote the set of points $x\in S$ such that $\dim_{HC}S_x\geq d$.

\begin{corollary}
\label{cor:HC-preimage-equidim}
Under the hypotheses of Proposition~\ref{prop:HC-preimage-equidim},
\[
\vp^{-1}(\mathcal{S}^d(S))=\mathcal{S}^{d+\lambda}(\vp^{-1}(S))\,,
\]
for all $d\in\N$.
\end{corollary}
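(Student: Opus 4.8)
The plan is to derive this as a pointwise reformulation of Proposition~\ref{prop:HC-preimage-equidim}. Both sets appearing in the asserted equality are subsets of $\vp^{-1}(S)$: indeed $\mathcal{S}^d(S)\subset S$ gives $\vp^{-1}(\mathcal{S}^d(S))\subset\vp^{-1}(S)$, while $\mathcal{S}^{d+\lambda}(\vp^{-1}(S))\subset\vp^{-1}(S)$ by definition of the filtration. So it suffices to test membership at an arbitrary point $\xi\in\vp^{-1}(S)$, and to show that $\xi$ lies in the left-hand set if and only if it lies in the right-hand set.

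Fix such a $\xi$ and set $\eta:=\vp(\xi)\in S$. First I would record that $\xi\in\vp^{-1}(\mathcal{S}^d(S))$ is equivalent, by definition of preimage, to $\eta\in\mathcal{S}^d(S)$, i.e.\ to $\dim_{HC}S_\eta\geq d$ (the clause $\eta\in S$ being automatic). On the other side, $\xi\in\mathcal{S}^{d+\lambda}(\vp^{-1}(S))$ means precisely $\dim_{HC}(\vp^{-1}(S))_\xi\geq d+\lambda$.

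The bridge between the two conditions is Proposition~\ref{prop:HC-preimage-equidim}, which applies verbatim since $S\subset\vp(\Omega)$ and $\vp$ is equidimensional with fibre dimension $\lambda$: it gives the exact identity
\[
\dim_{HC}(\vp^{-1}(S))_\xi=\dim_{HC}S_\eta+\lambda\,.
\]
Substituting this into the right-hand condition, $\dim_{HC}(\vp^{-1}(S))_\xi\geq d+\lambda$ becomes $\dim_{HC}S_\eta+\lambda\geq d+\lambda$, which is equivalent to $\dim_{HC}S_\eta\geq d$ --- exactly the left-hand condition. Chaining these equivalences yields $\xi\in\vp^{-1}(\mathcal{S}^d(S))\iff\xi\in\mathcal{S}^{d+\lambda}(\vp^{-1}(S))$, and since $\xi$ was an arbitrary point of $\vp^{-1}(S)$, the two sets coincide.

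There is no substantial obstacle here; the corollary is a formal consequence of Proposition~\ref{prop:HC-preimage-equidim}. The only point requiring a moment of care is the bookkeeping of the ``membership in $S$'' and ``membership in $\vp^{-1}(S)$'' clauses: one must restrict attention to $\vp^{-1}(S)$ from the outset, so that Proposition~\ref{prop:HC-preimage-equidim} is applicable at $\xi$ and so that the equivalence of the two dimension inequalities genuinely translates into equality of the subsets, rather than into mere agreement away from $\vp^{-1}(S)$.
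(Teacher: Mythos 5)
Your argument is correct and is exactly the intended one: the paper states this corollary without proof, treating it as an immediate pointwise reformulation of Proposition~\ref{prop:HC-preimage-equidim}, which is precisely what you carry out. The bookkeeping remark about restricting to $\vp^{-1}(S)$ from the outset is a reasonable bit of care, but there is no divergence from the paper's (implicit) route.
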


Without the equidimensionality assumption on $\vp$, we have the following:

\begin{proposition}
\label{prop:HC-preimage-ineq}
Let $S$ be an arbitrary subset of $\vp(\Omega)$. Suppose that for every $\eta\in S$, $S_\eta\cap(\vp(\Omega\setminus\Omega^{(\lambda)}))_\eta$ is nowhere-dense in $S_\eta$. Then, for all $\eta\in S$ and $\xi\in\vp^{-1}(\eta)$, we have
\begin{equation}
\label{eq:prop}
\dim_{HC}(\vp^{-1}(S))_\xi\leq\dim_{HC}S_\eta+\lambda\,,
\end{equation}
provided $(\vp^{-1}(S))_\xi\cap(\Omega\setminus\Omega^{(\lambda)})_\xi$ is nowhere-dense in $(\vp^{-1}(S))_\xi$.
\end{proposition}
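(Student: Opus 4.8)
The plan is to reduce, using the hypothesis at $\xi$, to the part of $\vp^{-1}(S)$ lying over the equidimensional locus $\Omega^{(\lambda)}$, and then to bound the holomorphic closure dimension componentwise by means of Theorem~\ref{thm:rem}. Set $d:=\dim_{HC}S_\eta$ and $T:=\vp^{-1}(S)\cap\Omega^{(\lambda)}$. Since $\Omega\setminus\Omega^{(\lambda)}=\{x:\fbd_x\vp>\lambda\}$ is $\C$-analytic (upper semicontinuity of fibre dimension), $\Omega^{(\lambda)}$ is open and $T_\xi=(\vp^{-1}(S))_\xi\setminus(\Omega\setminus\Omega^{(\lambda)})_\xi$. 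The nowhere-density hypothesis on $(\vp^{-1}(S))_\xi\cap(\Omega\setminus\Omega^{(\lambda)})_\xi$ then makes $T_\xi$ dense in $(\vp^{-1}(S))_\xi$, whence $\overline{(\vp^{-1}(S))_\xi}^{HC}=\overline{T_\xi}^{HC}$ (every $\C$-analytic germ containing the dense subset $T_\xi$ is closed, hence contains all of $(\vp^{-1}(S))_\xi$). This reduction is where the hypothesis enters: it is precisely what prevents the holomorphic closure from acquiring spurious components supported on the exceptional locus.

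Next I would choose a bounded neighbourhood $V$ of $\eta$ and a $\C$-analytic representative $Y$ of $\overline{S_\eta}^{HC}$ in $V$, shrinking $V$ so that $Y\supset S\cap V$ and $\dim_y Y\le d$ for every $y\in V$. Put $W:=\vp^{-1}(Y)$, a $\C$-analytic subset of the full neighbourhood $\vp^{-1}(V)\ni\xi$; then $T_\xi\subset W_\xi$, so $\overline{T_\xi}^{HC}\subset W_\xi$. Decomposing $W_\xi=\bigcup_j W^j_\xi$ into irreducible components, each irreducible component $X^i$ of $\overline{T_\xi}^{HC}$ is contained in a single $W^j$. By minimality of the holomorphic closure, $T\cap X^i$ is Zariski-dense in $X^i$, in particular nonempty; and since $T\subset\Omega^{(\lambda)}$, the component $W^j$ containing $X^i$ must meet $\Omega^{(\lambda)}$.

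It therefore suffices to show $\dim W^j\le d+\lambda$ for every component $W^j$ meeting $\Omega^{(\lambda)}$. For such $W^j$, the set $W^j\cap\Omega^{(\lambda)}$ is a dense Zariski-open subset, and at its points $\fbd_x(\vp|_{W^j})\le\fbd_x\vp=\lambda$, so the minimal fibre dimension $r_j$ of $\vp|_{W^j}$ satisfies $r_j\le\lambda$. Restricting $\vp|_{W^j}$ to the dense open locus where the fibre dimension equals $r_j$ and applying Theorem~\ref{thm:rem} gives $\dim\vp(W^j)=\dim W^j-r_j$. On the other hand $W^j\subset\vp^{-1}(Y)$ forces $\vp(W^j)\subset Y$, hence $\dim\vp(W^j)\le\dim Y\le d$. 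Combining, $\dim W^j=\dim\vp(W^j)+r_j\le d+\lambda$, so $\dim X^i\le d+\lambda$ for every $i$, and consequently $\dim_{HC}(\vp^{-1}(S))_\xi=\dim\overline{T_\xi}^{HC}\le d+\lambda=\dim_{HC}S_\eta+\lambda$.

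The step demanding the most care is the componentwise dimension count. The natural temptation is to argue $\vp(X^i)\subset Y$ directly, but this fails: $T\cap X^i$ is only Zariski-dense, not Euclidean-dense, in $X^i$, so $\vp$ of its closure need not land in $Y$. The remedy is to pass to the genuinely $\C$-analytic overset $W=\vp^{-1}(Y)$, defined on a full neighbourhood of $\xi$, for whose components the inclusion $\vp(W^j)\subset Y$ is automatic; the hypothesis then guarantees that only components meeting $\Omega^{(\lambda)}$ — where the fibre dimension is controlled by $\lambda$ — can occur in $\overline{T_\xi}^{HC}$, which is exactly what closes the estimate.
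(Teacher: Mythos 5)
Your proof is correct and follows essentially the same route as the paper's: pull back a representative $Y$ of $\overline{S_\eta}^{HC}$, discard the irreducible components of $\vp^{-1}(Y)$ supported in $\Omega\setminus\Omega^{(\lambda)}$ (which the nowhere-density hypothesis permits), and bound the dimension of the remaining components via Theorem~\ref{thm:rem}. Your componentwise count is in fact slightly more careful than the paper's, since you only claim the minimal fibre dimension $r_j\leq\lambda$ on the surviving components rather than asserting equality, which suffices for the estimate.
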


\begin{proof}
Given $\eta\in S$, choose an open neighbourhood $V$ of $\eta$ in $N$ such that $\overline{S_\eta}^{HC}$ has a representative $Y$ $\C$-analytic in $V$.
Then $\vp^{-1}(Y)$ is a $\C$-analytic subset of $\vp^{-1}(V)$, and it contains $\vp^{-1}(S)\cap\vp^{-1}(V)$.
Let $\xi\in\vp^{-1}(\eta)$ be such that $(\vp^{-1}(S))_\xi\cap(\Omega\setminus\Omega^{(\lambda)})_\xi$ is nowhere-dense in $(\vp^{-1}(S))_\xi$.
Let $U$ be an open neighbourhood of $\xi$ in $\Omega$, such that $\vp^{-1}(Y)\cap U$ has a finite number of irreducible components; say, $\Sigma^1,\dots,\Sigma^s$. Then, by assumption, $(\vp^{-1}(S))_\xi$ is contained in the union of those $\Sigma^j_\xi$ for which $\Sigma^j\not\subset(\Omega\setminus\Omega^{(\lambda)})\cap U$. Hence, replacing $\vp^{-1}(Y)\cap U$ by the union of these $\Sigma^j$, if needed, we can assume that there exists a component $\Sigma^j$ of $\vp^{-1}(Y)\cap U$, of dimension $\dim(\vp^{-1}(Y))_\xi$, and such that the generic fibre dimension of $\vp|_{\Sigma^j}$ is $\lambda$. Therefore, by Theorem~\ref{thm:rem},
\[
\dim(\vp^{-1}(Y))_\xi=\dim\Sigma^j=\dim(\vp(\Sigma^j))_\eta+\lambda\leq\dim Y_\eta+\lambda\,,
\]
and hence $\dim_{HC}(\vp^{-1}(S))_\xi\leq\dim Y_\eta+\lambda$, as required.
\qed
\end{proof}

In the semialgebraic setting we can get even more: Assume that $\Omega$ and $\Delta$ are semialgebraic in $M$ and $N$ respectively, and that $\vp:\Omega\to\Delta$ is holomorphic semialgebraic.

\begin{proposition}
\label{prop:HC-preimage}
Under the hypotheses of Proposition~\ref{prop:HC-preimage-ineq}, assume furthermore that $S$ is a semialgebraic subset of $N$ and $\vp$ is a semialgebraic mapping.
If $S$ is of constant holomorphic closure dimension, then we have equality in \eqref{eq:prop}.
\end{proposition}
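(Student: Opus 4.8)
The plan is to establish the reverse inequality to \eqref{eq:prop}, i.e.\ $\dim_{HC}(\vp^{-1}(S))_\xi\geq\dim_{HC}S_\eta+\lambda$, so that combined with Proposition~\ref{prop:HC-preimage-ineq} we obtain equality. Let me abbreviate $d:=\dim_{HC}S_\eta$, which by hypothesis is independent of $\eta\in S$. Since we already have $\leq$, it suffices to produce, for each $\eta\in S$ and $\xi\in\vp^{-1}(\eta)$, a lower bound on the holomorphic closure dimension of the preimage germ. The essential new input that was unavailable in Proposition~\ref{prop:HC-preimage-ineq} is that $\vp$ is now a holomorphic semialgebraic map, hence Nash by Proposition~\ref{prop:nash-map}, and $S$ is semialgebraic; this lets us invoke Theorem~\ref{thm:holo-semialg-map} (equivalently Theorem~\ref{thm:nash-map-image}) to control holomorphic closures of images.

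First I would fix $\eta\in S$ and $\xi\in\vp^{-1}(\eta)$ and choose an open neighbourhood $V$ of $\eta$ with a $\C$-analytic representative $Y$ of $\overline{S_\eta}^{HC}$ in $V$; note $\dim Y=\dim Y_\eta=d$ by constancy of the holomorphic closure dimension and the fact that $Y$ is the holomorphic closure. Let $X$ be a $\C$-analytic representative of $\overline{(\vp^{-1}(S))_\xi}^{HC}$ near $\xi$, chosen so that $X\supset\vp^{-1}(S)\cap U$ for a suitable neighbourhood $U$. The strategy is to consider the image $\vp(X\cap U)$: by Theorem~\ref{thm:holo-semialg-map}, its holomorphic closure $\overline{(\vp(X\cap U))_\eta}^{HC}$ is a Nash germ of dimension exactly $\dim\vp(X\cap U)$. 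Since $X\supset\vp^{-1}(S)\cap U$, this image germ contains $S_\eta$, so its holomorphic closure contains $\overline{S_\eta}^{HC}=Y_\eta$; therefore $\dim\vp(X\cap U)\geq\dim Y_\eta=d$. The point is to relate $\dim X_\xi$ to $\dim\vp(X\cap U)$ via the fibre dimension of $\vp|_X$: exactly as in the proof of Proposition~\ref{prop:HC-preimage-ineq}, the nowhere-density hypothesis on $(\vp^{-1}(S))_\xi\cap(\Omega\setminus\Omega^{(\lambda)})_\xi$ forces $X_\xi$ (after discarding components landing in $\Omega\setminus\Omega^{(\lambda)}$) to have a component of full dimension on which $\vp$ has generic fibre dimension $\lambda$. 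On that component Theorem~\ref{thm:rem} gives $\dim X_\xi=\dim(\vp(X))_\eta+\lambda\geq d+\lambda$.

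The key technical step — and the one I expect to be the main obstacle — is ensuring that after passing to the holomorphic closure $X$ of the preimage germ, the map $\vp|_X$ still has a component of fibre dimension $\lambda$ along which the dimension count is sharp, and that the resulting image germ genuinely contains $S_\eta$ rather than merely a subgerm of it. The subtlety is that holomorphic closure can in principle increase dimension under the image operation, so one must lean on the \emph{equality} clause of Theorem~\ref{thm:holo-semialg-map} (that $\dim\overline{(\vp(X\cap U))_\eta}^{HC}=\dim\vp(X\cap U)$, with no Osgood-type jump) rather than a crude inequality; this is precisely where the Nash hypotheses are indispensable, as Remark~\ref{rem:false-in-general} shows the statement fails for merely $\C$-analytic $X$. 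Once the image-dimension equality and the fibre-dimension bookkeeping are in place, combining $\dim X_\xi\geq d+\lambda$ with the opposite inequality from Proposition~\ref{prop:HC-preimage-ineq} yields $\dim_{HC}(\vp^{-1}(S))_\xi=d+\lambda=\dim_{HC}S_\eta+\lambda$, completing the proof.
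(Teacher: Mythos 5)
Your overall strategy for points $\xi\in\Omega^{(\lambda)}$ is essentially the paper's: take a Nash representative $X$ of $\overline{(\vp^{-1}(S))_\xi}^{HC}$ containing $\vp^{-1}(S)\cap U$ (the Nashness comes from Proposition~\ref{prop:HC-nash}, since $\vp^{-1}(S)$ is semialgebraic by Tarski--Seidenberg; you should say this explicitly, as Theorem~\ref{thm:holo-semialg-map} requires it), push forward by $\vp$, and use the equality clause of Theorem~\ref{thm:holo-semialg-map} to rule out an Osgood-type jump. But there is a genuine gap at the step ``since $X\supset\vp^{-1}(S)\cap U$, this image germ contains $S_\eta$''. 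What you actually get is $\vp(X\cap U)\supset\vp(\vp^{-1}(S)\cap U)=S\cap\vp(U)$, and this has germ $S_\eta$ at $\eta$ only if $\vp(U)$ is a neighbourhood of $\eta$, i.e.\ only if $\vp$ is open at $\xi$. By the Remmert Open Mapping Theorem this holds after shrinking $U$ inside $\Omega^{(\lambda)}$ (which is open by upper semicontinuity of fibre dimension), but it fails in general at points of $\Omega\setminus\Omega^{(\lambda)}$: think of a blow-down, which is not open along the exceptional set, so that $S\cap\vp(U)$ can be a proper ``wedge'' of $S$ near $\eta$ whose holomorphic closure at $\eta$ is strictly smaller than $\overline{S_\eta}^{HC}$. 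The paper therefore proves equality first only on $\vp^{-1}(S)\cap\Omega^{(\lambda)}$, and then treats $\xi\notin\Omega^{(\lambda)}$ by a separate limiting argument: the nowhere-density hypothesis supplies points $x\in\vp^{-1}(S)\cap\Omega^{(\lambda)}$ arbitrarily close to $\xi$ at which $\dim_{HC}(\vp^{-1}(S))_x=h+\lambda$ --- and this is exactly where the \emph{constancy} of $\dim_{HC}S$ is genuinely used, since $\vp(x)\neq\eta$ in general --- after which upper semicontinuity of the holomorphic closure dimension (established in the proof of Theorem~\ref{thm:HC-stratification}) gives $\dim_{HC}(\vp^{-1}(S))_\xi\geq h+\lambda$. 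Your plan never isolates this case, and never uses the constancy hypothesis in an essential way, which is a symptom of the degenerate fibres not being handled.

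A smaller but real slip is in the fibre-dimension bookkeeping. You propose to find \emph{one} top-dimensional component of $X$ on which $\vp$ has generic fibre dimension $\lambda$ and apply Theorem~\ref{thm:rem} to it; but the containment of $S_\eta$ is only known for the union of the images of \emph{all} components, and the component actually covering $S_\eta$ could a priori be a different one (possibly with larger fibre dimension). The clean route, which the paper takes, is to replace $X$ by $X^{(\lambda)}=\{x\in X:\fbd_x(\vp|_X)\geq\lambda\}$, which is again Nash, still contains $\vp^{-1}(S)\cap U$, and hence equals $X$ by minimality of the holomorphic closure; then \emph{every} irreducible component $\Sigma^j$ satisfies $\dim\vp(\Sigma^j)=\dim\Sigma^j-\lambda_j\leq\dim X_\xi-\lambda$, so $\dim\vp(X)\leq\dim X_\xi-\lambda$, and combining this with $h\leq\dim\overline{(\vp(X))_\eta}^{HC}=\dim\vp(X)$ closes the contradiction without having to identify which component carries $S_\eta$.
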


\begin{proof}
Let $h$ denote the constant holomorphic closure dimension of $S$.
Choose a point $\xi\in\vp^{-1}(S)\cap\Omega^{(\lambda)}$, and suppose that $\dim_{HC}(\vp^{-1}(S))_\xi<h+\lambda$.
By Proposition~\ref{prop:HC-nash}, the holomorphic closure $\overline{(\vp^{-1}(S))_\xi}^{HC}$ is a Nash germ. Therefore, we can choose an open neighbourhood $U$ of $\xi$ in $\Omega$ such that $\overline{(\vp^{-1}(S))_\xi}^{HC}$ has a Nash representative $X$ in $U$, with $\dim X_\xi=\dim_{HC}(\vp^{-1}(S))_\xi$, and $X\supset\vp^{-1}(S)\cap U$. After shrinking $U$ if needed, we can also assume that $U\subset\Omega^{(\lambda)}$. Then $\vp|_U$ is an open mapping, by Remmert's Open Mapping Theorem, and hence $\vp(U)$ is an open neighbourhood of $\eta:=\vp(\xi)$.
Let $X^{(\lambda)}:=\{x\in X:\fbd_x(\vp|_X)\geq\lambda\}$. Then $X^{(\lambda)}\supset\vp^{-1}(S)\cap U$, and $X^{(\lambda)}$ is a Nash subset of $U$, so by minimality of $X_\xi$, we can replace $X$ with $X^{(\lambda)}$. After shrinking $U$ if necessary, we can assume that $X$ has a finite number of irreducible components; say, $\Sigma^1,\dots,\Sigma^s$. For each $j=1,\dots,s$, let $\lambda_j$ denote the generic fibre dimension of $\vp|_{\Sigma^j}$. Then we have
\[
\dim\vp(\Sigma^j)=\dim\Sigma^j-\lambda_j\leq\dim X_\xi-\lambda\,,
\]
and hence $\dim\vp(X)\leq\dim X_\xi-\lambda$, since $\vp(X)=\bigcup_j\vp(\Sigma^j)$.
By Theorem~\ref{thm:holo-semialg-map}, $\overline{(\vp(X))_\eta}^{HC}$ is a Nash germ at $\eta$ of dimension $\dim\vp(X)$.
On the other hand,
\[
\vp(X)\supset\vp(\vp^{-1}(S)\cap U)=S\cap\vp(U)\,,
\]
hence, by openness of $\vp(U)$ in $N$, $\overline{(\vp(X))_\eta}^{HC}$ contains the holomorphic closure $\overline{S_\eta}^{HC}$. Therefore
\begin{multline}
\notag
h=\dim\overline{S_\eta}^{HC}\leq\dim\overline{(\vp(X))_\eta}^{HC}=\dim\vp(X)\leq\dim X_\xi-\lambda\\
=\dim_{HC}(\vp^{-1}(S))_\xi-\lambda<(h+\lambda)-\lambda\,;
\end{multline}
a contradiction. We have thus proved that $\dim_{HC}(\vp^{-1}(S))_\xi=h+\lambda$ for all $\xi\in\vp^{-1}(S)\cap\Omega^{(\lambda)}$.

To complete the proof, consider a point $\xi\in\vp^{-1}(S)\setminus\Omega^{(\lambda)}$. By assumption, $(\vp^{-1}(S))_\xi\cap(\Omega\setminus\Omega^{(\lambda)})_\xi$ is nowhere-dense in $(\vp^{-1}(S))_\xi$, hence arbitrarily close to $\xi$ there are points $x$ of $\vp^{-1}(S)$ such that $\dim_{HC}(\vp^{-1}(S))_x=h+\lambda$. Hence $\dim_{HC}(\vp^{-1}(S))_\xi\geq h+\lambda$, by upper semicontinuity of the holomorphic closure dimension. The opposite inequality was shown in Proposition~\ref{prop:HC-preimage-ineq}.
\qed
\end{proof}
\medskip

The proof of Proposition~\ref{prop:HC-preimage} is of local nature. Therefore we can derive from it the following result (under the assumption that $\vp$ is dominant).

\begin{proposition}
\label{prop:HC-preimage-bis}
Let $S$ be a semialgebraic subset of $N$. Then, for all $\eta\in S$ and $\xi\in\vp^{-1}(\eta)\cap\Omega^{(\lambda)}$, we have
\[
\dim_{HC}(\vp^{-1}(S))_\xi=\dim_{HC}S_\eta+\lambda\,.
\]
\end{proposition}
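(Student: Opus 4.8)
The plan is to reduce Proposition~\ref{prop:HC-preimage-bis} to the already-established Proposition~\ref{prop:HC-preimage} by restricting attention to a neighbourhood of a point where the holomorphic closure dimension of $S$ is locally constant. The key observation is that Proposition~\ref{prop:HC-preimage} has a constant-holomorphic-closure-dimension hypothesis on $S$, whereas here $S$ is an arbitrary semialgebraic set; but by Theorem~\ref{thm:HC-stratification} the set $S$ stratifies semialgebraically by holomorphic closure dimension, so near any fixed $\eta$ we can arrange $S$ to have constant such dimension after passing to a suitable semialgebraic piece. Since the assertion is pointwise in $\eta$ and $\xi$, and since Proposition~\ref{prop:HC-preimage} is of local nature (as the paragraph preceding the statement emphasizes), it suffices to verify the two nowhere-density hypotheses of Proposition~\ref{prop:HC-preimage-ineq} locally at the relevant points.

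First I would fix $\eta\in S$ and $\xi\in\vp^{-1}(\eta)\cap\Omega^{(\lambda)}$, and replace $S$ by the stratum $S'=\mathcal{S}^d(S)\setminus\mathcal{S}^{d+1}(S)$ containing $\eta$, where $d=\dim_{HC}S_\eta$. By Theorem~\ref{thm:HC-stratification}, $S'$ is semialgebraic, and on $S'$ the holomorphic closure dimension is constant, equal to $h:=d$. I would then check that the holomorphic closure dimension of $S$ at $\eta$ and that of $S'$ at $\eta$ agree: since $S'_\eta\subset S_\eta$ we get $\dim_{HC}S'_\eta\leq\dim_{HC}S_\eta$, and the reverse follows because $\eta$ lies in $S'$ and the stratum records exactly those points of holomorphic closure dimension $d$, together with upper semicontinuity of $\dim_{HC}$ (used already in the proof of Proposition~\ref{prop:HC-preimage}). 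One must likewise verify $\dim_{HC}(\vp^{-1}(S))_\xi=\dim_{HC}(\vp^{-1}(S'))_\xi$, which again reduces to comparing the germs $(\vp^{-1}(S))_\xi$ and $(\vp^{-1}(S'))_\xi$ together with upper semicontinuity.

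Next I would discharge the nowhere-density hypotheses of Proposition~\ref{prop:HC-preimage-ineq} for $S'$ and $\xi$. The set $\Omega\setminus\Omega^{(\lambda)}$ is the locus where the fibre dimension of $\vp$ exceeds its generic value $\lambda$; by upper semicontinuity of fibre dimension (Chevalley) it is a proper $\C$-analytic—indeed, in the semialgebraic setting, Nash—subset of $\Omega$, hence nowhere-dense there. Since $\xi\in\Omega^{(\lambda)}$, the germ $(\Omega\setminus\Omega^{(\lambda)})_\xi$ is either empty or a proper analytic subgerm, so $(\vp^{-1}(S'))_\xi\cap(\Omega\setminus\Omega^{(\lambda)})_\xi$ is automatically nowhere-dense in $(\vp^{-1}(S'))_\xi$. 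For the downstairs hypothesis, that $S'_\eta\cap(\vp(\Omega\setminus\Omega^{(\lambda)}))_\eta$ is nowhere-dense in $S'_\eta$, I would use that $\vp$ is dominant, so $\vp(\Omega\setminus\Omega^{(\lambda)})$ is contained in a proper semialgebraic—hence lower-dimensional—subset of $\Delta$, whence its intersection with the stratum $S'$ is nowhere-dense in $S'$; here I would invoke Theorem~\ref{thm:chev} (via Proposition~\ref{prop:nash-proj}) to control the dimension of the image of the exceptional locus.

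With both hypotheses verified, Proposition~\ref{prop:HC-preimage} applied to $S'$ (of constant holomorphic closure dimension $h$) yields equality in \eqref{eq:prop}, that is $\dim_{HC}(\vp^{-1}(S'))_\xi=h+\lambda$; transporting this back to $S$ via the germ identifications of the first step gives $\dim_{HC}(\vp^{-1}(S))_\xi=\dim_{HC}S_\eta+\lambda$, as claimed. The main obstacle I anticipate is the verification that passing to the stratum $S'$ does not change the holomorphic closure dimension of the preimage at $\xi$: one must ensure that shrinking $S$ to $S'$ near $\eta$ shrinks $\vp^{-1}(S)$ near $\xi$ only by a nowhere-dense set, so that the holomorphic closures agree; this is where the semialgebraicity of the stratification (Theorem~\ref{thm:HC-stratification}) and the openness of $\vp|_{\Omega^{(\lambda)}}$ (Remmert's Open Mapping Theorem) are essential, and I would take care that the upper-semicontinuity argument is applied to germs at points arbitrarily near $\xi$ rather than at $\xi$ alone.
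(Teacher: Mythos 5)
Your plan is to feed Proposition~\ref{prop:HC-preimage} a stratum $S'=\mathcal{S}^d(S)\setminus\mathcal{S}^{d+1}(S)$ of constant holomorphic closure dimension, but this runs into a genuine obstruction at the point where you verify the ``downstairs'' hypothesis inherited from Proposition~\ref{prop:HC-preimage-ineq}, namely that $S'_\eta\cap(\vp(\Omega\setminus\Omega^{(\lambda)}))_\eta$ be nowhere-dense in $S'_\eta$. Your argument for this -- that $\vp(\Omega\setminus\Omega^{(\lambda)})$ is lower-dimensional in $\Delta$, hence meets $S'$ in a nowhere-dense subset of $S'$ -- is invalid: a nowhere-dense subset of $\Delta$ can perfectly well contain all of $S'$ when $S'$ is itself low-dimensional. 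For instance, take $\vp:\C^3\to\C^2$, $\vp(x,y,z)=(xy,xz)$, so that $\lambda=1$, $\Omega\setminus\Omega^{(1)}=\{x=0\}$ and $\vp(\Omega\setminus\Omega^{(1)})=\{(0,0)\}$; with $S=\{(0,0)\}$, $\eta=(0,0)$ and $\xi=(1,0,0)\in\vp^{-1}(\eta)\cap\Omega^{(1)}$, your stratum is $S'=S$, which is entirely contained in $\vp(\Omega\setminus\Omega^{(\lambda)})$. The hypothesis of Proposition~\ref{prop:HC-preimage} simply fails, so the black-box reduction cannot go through -- even though the conclusion of Proposition~\ref{prop:HC-preimage-bis} is true there ($\dim_{HC}(\vp^{-1}(S))_\xi=1=0+\lambda$, the germ at $\xi$ being the line $\{y=z=0\}$). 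The difficulty you flag at the end (comparing $\dim_{HC}(\vp^{-1}(S))_\xi$ with $\dim_{HC}(\vp^{-1}(S'))_\xi$) is a further soft spot, though it can be sidestepped by proving the upper bound for $S$ directly rather than through $S'$.

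The paper avoids all of this by not invoking Proposition~\ref{prop:HC-preimage} as a stated result, but re-running the relevant portion of its \emph{proof}: for $\xi\in\Omega^{(\lambda)}$ one may shrink $U$ so that $U\subset\Omega^{(\lambda)}$ (upper semicontinuity of fibre dimension), and then the contradiction argument -- Nash representative $X$ of $\overline{(\vp^{-1}(S))_\xi}^{HC}$, replacement by $X^{(\lambda)}$, openness of $\vp|_U$, and Theorem~\ref{thm:holo-semialg-map} -- uses only the number $\dim_{HC}S_\eta$ at the single point $\eta$, never the constancy of the holomorphic closure dimension on $S$ nor either nowhere-density hypothesis. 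Likewise the upper bound follows directly from Theorem~\ref{thm:rem} applied on $U\subset\Omega^{(\lambda)}$ to $\vp^{-1}(Y)$ for a representative $Y$ of $\overline{S_\eta}^{HC}$. So the stratification detour is unnecessary, and as carried out it does not close; to repair your write-up you would have to open up the proof of Proposition~\ref{prop:HC-preimage} anyway, at which point you are doing what the paper does.
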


\begin{proof}
One can repeat the (relevant part of the) proof of Proposition~\ref{prop:HC-preimage} verbatim, because for a point $\xi\in\Omega^{(\lambda)}$ one can choose an open neighbourhood $U$ such that $U\subset\Omega^{(\lambda)}$, by upper semicontinuity of fibre dimension.
\qed
\end{proof}

\section{Applications to CR geometry}
\label{sec:applications}

\begin{proposition}
\label{prop:semialg-smooth-CR-mnfld}
Let $S$ be a $d$-dimensional semialgebraic set in $M$, of constant holomorphic closure dimension $h$.
Then $S$ contains a closed semialgebraic set $T$ of dimension less than $d$, such that $S\setminus T$ is a CR manifold of CR dimension $d-h$.
\end{proposition}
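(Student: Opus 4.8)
The plan is to strip from $S$ first its non-manifold points and then the locus where the CR dimension exceeds its generic value, and to identify that generic value with $d-h$. \emph{First} I would set $T_1:=S\setminus\mathrm{Reg}_d(S)$, the set of points at which $S$ is not a germ of a $d$-dimensional $\R$-analytic manifold. Being a smooth point of top dimension is an open condition, so $\mathrm{Reg}_d(S)$ is open in $S$ and $T_1$ is closed in $S$; by the semialgebraic stratification of Remark~\ref{rem:SAG-facts}(3) and standard semialgebraic dimension theory, $T_1$ is semialgebraic of dimension $<d$. Thus $\mathrm{Reg}_d(S)=S\setminus T_1$ is a $d$-dimensional $\R$-analytic semialgebraic manifold, and for $\xi\in\mathrm{Reg}_d(S)$ one has $S_\xi=(\mathrm{Reg}_d(S))_\xi$, whence $\dim_{HC}(\mathrm{Reg}_d(S))_\xi=\dim_{HC}S_\xi=h$.

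\emph{Next} I would analyse the CR dimension $m(\xi):=\dim_\C(T_\xi S\cap iT_\xi S)$ on $\mathrm{Reg}_d(S)$. The Gauss map $\xi\mapsto T_\xi S$ is semialgebraic on a smooth semialgebraic manifold, so $m$ is a semialgebraic function; moreover $2m(\xi)=\dim_\R\ker\bigl(T_\xi S\to M/T_\xi S,\ v\mapsto iv\bmod T_\xi S\bigr)$, and since the rank of a continuously varying family of linear maps is lower semicontinuous, $m$ is upper semicontinuous. Hence, being integer-valued, $\{m\le k\}$ is open and semialgebraic for every $k$. On $\mathrm{Reg}_d(S)$ the holomorphic closure $\overline{S_\xi}^{HC}$ is, by Proposition~\ref{prop:HC-closure-smooth}, (a germ of) a complex-analytic set of dimension $h$, so at generic $\xi$ it is a smooth complex $h$-manifold germ containing $S_\xi$; the inclusion $T_\xi S+iT_\xi S\subseteq T_\xi\overline{S_\xi}^{HC}$ then forces $d-m(\xi)\le h$, i.e. $m(\xi)\ge d-h$ everywhere on $\mathrm{Reg}_d(S)$.

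The \textbf{heart of the argument}, which I expect to be the main obstacle, is the reverse inequality on the generic locus, namely the complementarity $m=d-h$. Near any point of $\mathrm{Reg}_d(S)$ the set where $m$ attains its (local) minimum is open, dense, and a smooth manifold of constant CR dimension, hence a CR manifold, whose germs agree with those of $S$ and so have holomorphic closure dimension $h$. The key input is that for a real-analytic manifold of constant CR dimension the holomorphic closure dimension is complementary to the CR dimension, $\dim_{HC}=d-m$ (the relation of \cite{AS}); equivalently, at a generic point the tangent space to $\overline{S_\xi}^{HC}$ is exactly the complex subspace $T_\xi S+iT_\xi S$, so that if $d-m<h$ the set would be locally contained in a complex submanifold of dimension $d-m<h$, contradicting the minimality of the holomorphic closure. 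Granting this, the local minimum of $m$ equals $d-h$, so $G:=\{\xi\in\mathrm{Reg}_d(S):m(\xi)=d-h\}=\{m\le d-h\}$ is open and dense in $\mathrm{Reg}_d(S)$.

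\emph{Finally} I would assemble $T:=T_1\cup\{\xi\in\mathrm{Reg}_d(S):m(\xi)>d-h\}$. The second set is $\mathrm{Reg}_d(S)\setminus G$, which is closed in $\mathrm{Reg}_d(S)$, semialgebraic, and has empty interior since $G$ is dense; as a semialgebraic subset of a $d$-manifold has dimension $d$ precisely when it has nonempty interior, it has dimension $<d$. A short check shows $T$ is closed in $S$ (limit points of the second set lying in $\mathrm{Reg}_d(S)$ stay there by closedness, and the remaining ones fall into $T_1$), semialgebraic, and of dimension $<d$. Then $S\setminus T=G$ is a smooth $d$-dimensional $\R$-analytic manifold of constant CR dimension $d-h$, i.e. a CR manifold of CR dimension $d-h$, as required.
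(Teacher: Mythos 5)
Your proof is correct and follows essentially the same route as the paper: both arguments discard $S\setminus\mathrm{Reg}_d(S)$, then the locus where the CR dimension of the tangent space exceeds $d-h$, using the semialgebraicity of the Gauss map (equivalently, of the sets $\{\xi:\dim_{CR}T_\xi S\ge k\}$ via the Grassmannian) and the complementarity $\dim_{HC}=\dim_\R-\dim_{CR}$ from \cite{AS}. The only difference is one of packaging: the paper invokes \cite[Thm.\,1.5]{AS} as a black box to produce the exceptional analytic set on each component of $\mathrm{Reg}_d(S)$, whereas you re-derive that step from \cite[Prop.\,1.4]{AS} together with the tangent-space inclusion into the holomorphic closure.
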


\begin{proof}
The proof is an easy adaptation of that of \cite[Thm.\,1.5]{AS}; we include it for the reader's convenience.

Let $\mathrm{Gr}(2n,d)$ denote the space of $d$-dimensional $\R$-linear subspaces of $\C^n\cong\R^{2n}$, and let $G_k$ be the subset of $\mathrm{Gr}(2n,d)$ consisting of $d$-dimensional subspaces of $\C^n$ of CR dimension at least $k$. Let $L\in\mathrm{Gr}(2n,d)$ be a subspace generated by vectors $v^1,\dots,v^d$, $v^j=(v^j_1,\dots,v^j_{2n})\in\R^{2n}$, $j=1,\dots,d$, and let $w^j=(w^j_1,\dots,w^j_{2n}):=Jv^j$ (where $Jv$ means the product $\sqrt{-1}\cdot v$, with $v$ viewed as a vector in $\C^n$). Then $L\in G_k$ if and only if in any matrix of the form
\begin{equation*}
\begin{bmatrix}
v^1_1 & \cdots & v^d_1 & w^{1}_1 & \cdots & w^{d-2k+1}_1 \\
\vdots & & \vdots & \vdots & & \vdots\\
v^1_{2n} & \cdots & v^d_{2n} & w^{1}_{2n} & \cdots & w^{d-2k+1}_{2n}
\end{bmatrix}
\end{equation*}
every $(2d-2k+1)$-minor vanishes, provided $2d-2k+1\leq2n$.

Let $S^\iota$ be a connected component of the $\R$-analytic manifold $\mathrm{Reg}_d(S)$. The assignment $p\mapsto T_pS^\iota$ defines a real-analytic semialgebraic map from $S^\iota$ to $\mathrm{Gr}(2n,d)$. Hence, for every $0\leq k\leq\left[\frac{d}{2}\right]$, the set
\[
S^\iota_k:=\{x\in S^\iota: \dim_{CR}T_xS^\iota\geq k\}
\]
is an $\R$-analytic semialgebraic subset of $S^\iota$. If $m_\iota=\min\{\dim_{CR}T_pS^\iota: p\in S^\iota\}$, then $S^\iota_{m_\iota+1}$ is a proper analytic subset of $S^\iota$ (hence of dimension less than $d$) and $S^\iota\setminus S^\iota_{m_\iota+1}=S^\iota_{m_\iota}\setminus S^\iota_{m_\iota+1}$ is a CR-manifold of CR dimension $m_\iota$.
By \cite[Prop.\,1.4]{AS}, $m_\iota=d-h$ for all $\iota$.
The result thus follows by setting $T$ to be the union of all the $S^\iota_{m_\iota+1}$ and the set
$S\setminus\mathrm{Reg}_d(S)$.
\qed
\end{proof}

\begin{corollary}
\label{cor:semialg-CR-stratification}
Every semialgebraic subset $S$ of $M$ is a disjoint union of a finite family of CR manifolds which are semialgebraic in $M$.
\end{corollary}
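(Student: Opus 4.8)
The goal is to prove Corollary~\ref{cor:semialg-CR-stratification}: every semialgebraic $S\subset M$ is a finite disjoint union of semialgebraic CR manifolds. The plan is to combine the stratification by holomorphic closure dimension (Theorem~\ref{thm:HC-stratification}) with the CR-manifold decomposition available on pieces of constant holomorphic closure dimension (Proposition~\ref{prop:semialg-smooth-CR-mnfld}), and then iterate by descending dimension.

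First I would reduce to the case where $S$ has \emph{constant} holomorphic closure dimension. By Theorem~\ref{thm:HC-stratification}, the sets $\mathcal{S}^d(S)$ are semialgebraic and closed in $S$, and they form a descending filtration $S=\mathcal{S}^0(S)\supset\mathcal{S}^1(S)\supset\cdots$ which is finite since $\dim_{HC}$ is bounded by $\dim_\C M$. Hence each difference $\mathcal{S}^d(S)\setminus\mathcal{S}^{d+1}(S)$ is semialgebraic, and on it the holomorphic closure dimension is constantly $d$. Since these differences partition $S$ into finitely many semialgebraic pieces, it suffices to establish the result for a semialgebraic set of constant holomorphic closure dimension.

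Next, fix such a piece; call it $S'$ with constant holomorphic closure dimension $h$ and put $d:=\dim_\R S'$. Here I would invoke Proposition~\ref{prop:semialg-smooth-CR-mnfld}, which produces a closed semialgebraic $T\subset S'$ with $\dim_\R T<d$ such that $S'\setminus T$ is a CR manifold (of CR dimension $d-h$). Crucially, $S'\setminus T$ is semialgebraic (as the difference of two semialgebraic sets), so it is a semialgebraic CR manifold, and it is the top-dimensional stratum. The leftover $T$ is again semialgebraic, closed in $S'$, and of strictly smaller real dimension. I would then iterate the entire construction on $T$: apply the reduction to constant holomorphic closure dimension again (Theorem~\ref{thm:HC-stratification} applied to $T$), then peel off a CR manifold via Proposition~\ref{prop:semialg-smooth-CR-mnfld} from each constant-$\dim_{HC}$ piece.

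The main point to verify is that this iteration \emph{terminates} after finitely many steps, so that the resulting collection of strata is finite. This is where I expect the only real bookkeeping: at each stage the remaining set drops in real dimension by at least one (the leftover $T$ satisfies $\dim_\R T<\dim_\R S'$), and real dimension is a nonnegative integer bounded by $2n=\dim_\R M$, so the descent halts. Within a single dimension the preliminary splitting by holomorphic closure dimension produces only finitely many pieces (Theorem~\ref{thm:HC-stratification}), and each contributes one CR-manifold stratum via Proposition~\ref{prop:semialg-smooth-CR-mnfld}; thus the total number of strata is finite. The strata are pairwise disjoint and semialgebraic by construction, and each is a CR manifold, which is exactly the assertion. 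The argument is genuinely a clean induction on $\dim_\R$ feeding off the two cited results, so no further technical obstacle arises beyond confirming finiteness.
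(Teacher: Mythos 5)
Your argument is correct in outline and runs on the same engine as the paper's proof --- Proposition~\ref{prop:semialg-smooth-CR-mnfld} to peel a top-dimensional CR manifold off a set of constant holomorphic closure dimension, followed by a descent on $\dim_\R$ --- but you arrive at constant holomorphic closure dimension by a different route. The paper does not invoke Theorem~\ref{thm:HC-stratification} here at all: it decomposes $S$ into connected $\R$-analytic semialgebraic manifolds (Remark~\ref{rem:SAG-facts}(3)), each of which has constant holomorphic closure dimension by Corollary~\ref{cor:CR-mnfld}, and then handles the leftover $T$ wholesale by induction on $\dim_\R S$. Your route via the filtration $\{\mathcal{S}^d(S)\}$ is closer to how the paper later deduces Theorem~\ref{thm:semialg-CR-stratification} (where Corollary~\ref{cor:semialg-CR-stratification} is applied to the differences $\mathcal{S}^d(S)\setminus\mathcal{S}^{d+1}(S)$), and it buys compatibility of the strata with that filtration for free; the cost is the subtlety below.

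The one genuine gap: you assert that on $S':=\mathcal{S}^d(S)\setminus\mathcal{S}^{d+1}(S)$ ``the holomorphic closure dimension is constantly $d$'' and then feed $S'$ into Proposition~\ref{prop:semialg-smooth-CR-mnfld}. That proposition requires constancy of the holomorphic closure dimension \emph{of the set $S'$ itself}, i.e.\ of $\dim_{HC}(S')_\xi$ computed from the germ of $S'$ at $\xi$, whereas membership in $\mathcal{S}^d(S)$ is defined by $\dim_{HC}S_\xi$, the dimension attached to the germ of the ambient set $S$. Since $S'\subset S$, one only gets $\dim_{HC}(S')_\xi\leq\dim_{HC}S_\xi=d$ for free; a priori the value $d$ could be contributed entirely by other strata of $S$ adherent to $\xi$, leaving $\dim_{HC}(S')_\xi<d$. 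The claim is in fact true, but it needs the mechanism from the proof of Theorem~\ref{thm:HC-stratification}: writing $S=\bigcup_\iota S_\iota$ as there, $\dim_{HC}S_\xi=d$ forces some $\iota$ with $\xi\in\overline{S_\iota}$ and $\dim X_\iota=d$; since $\mathcal{S}^{d+1}(S)$ is closed in $S$ and misses $\xi$, the germ $(S_\iota)_\xi$ is contained in $(S')_\xi$, and Proposition~\ref{prop:HC-closure-smooth} gives $\dim_{HC}(S_\iota)_\xi=d$, whence $\dim_{HC}(S')_\xi=d$. With this supplied (or by simply switching to the paper's decomposition into connected regular pieces, which sidesteps the issue entirely), your proof goes through; the termination and finiteness bookkeeping is fine.
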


\begin{proof}
We proceed by induction on $d:=\dim_{\R} S$.
If $d=0$, there is nothing to show. Suppose then that $d\geq1$.

As a semialgebraic set, $S$ is a disjoint union of a finite family of connected $\R$-analytic manifolds which are semialgebraic in $M$ 
(Remark~\ref{rem:SAG-facts}(3)). Thus, without loss of generality, we can assume that $S$ is connected and $S=\mathrm{Reg}_d(S)$.
By Corollary~\ref{cor:CR-mnfld}, $S$ has constant holomorphic closure dimension.
Let $T$ be as in Proposition~\ref{prop:semialg-smooth-CR-mnfld}. Since $T$ is nowhere dense in $S$, $\dim_{\R} T < \dim_{\R} S$. By the inductive hypothesis, $T$ is partitioned into a finite family $\{T_\iota\}_{\iota\in I}$ of semialgebraic CR manifolds. The finite family $\{T_\iota\}_{\iota\in I}\cup \{ S\setminus T\}$ therefore realizes a partition of $S$ into CR manifolds which are semialgebraic in $M$.
\qed
\end{proof}

Theorem~\ref{thm:semialg-CR-stratification} now follows immediately:

\subsubsection*{Proof of Theorem~\ref{thm:semialg-CR-stratification}}
To get a semialgebraic partition of $S$ by CR manifolds compatible with the family $\{\mathcal{S}^d(S)\}_{d\in\N}$, apply Corollary~\ref{cor:semialg-CR-stratification} to the semialgebraic sets $\mathcal{S}^d(S)\setminus\mathcal{S}^{d+1}(S)$, $d\in\N$.

By further stratifying, if needed, we can assume that the strata $S_\iota$ satisfy the ordinary condition of the frontier; i.e., that $S_j\cap\overline{S_k}=\varnothing$ or else $S_j\subset\overline{S_k}$ and $\dim_\R{S_j}<\dim_\R{S_k}$, for any $j,k$ (cf. \cite[Prop.\,9.1.8]{BCR}). 
Let us then choose $j,k\in I$ such that $S_j\subset\overline{S_k}$. By Corollary~\ref{cor:CR-mnfld}, $S_j$ and $S_k$ have constant holomorphic closure dimensions, say $h_j$ and $h_k$ resp., and there is an irreducible $\C$-algebraic set $X_k$ in $M$ such that $X_k\supset\overline{S_k}$ and $h_k=\dim X_k$. Then $S_j\subset\overline{S_k}\subset X_k$, and hence $h_j\leq\dim{X_k}=h_k$, as required.
\qed

\begin{remark}
\label{rem:no-analytic-CR-tameness}
It is interesting to consider the above results in the semianalytic setting. By \cite[Thm.\,1.5]{AS}, an irreducible $\R$-analytic set of pure dimension is a CR manifold outside a nowhere-dense semianalytic subset. Therefore the inductive argument of Corollary~\ref{cor:semialg-CR-stratification} carries over to the case when $S$ is semianalytic. On the other hand, a semianalytic stratification \emph{compatible with the family $\{\mathcal{S}^d(S)\}_{d\in\N}$} does not exist in general for $S$ semianalytic. Indeed, every CR manifold is of constant holomorphic closure dimension, by \cite[Prop.\,1.4]{AS}, but this dimension is not tame on semianalytic sets (Remark~\ref{rem:no-analytic-HC-tameness}).
\end{remark}
\medskip

The following two results are concerned with the relationship between CR structures of a semialgebraic set and of its preimage under a holomorphic semialgebraic mapping.
Here, as before, $\Omega^{(\lambda)}$ denotes the set of those $x\in\Omega$ for which $\fbd_x\vp=\lambda$.

\begin{theorem}
\label{thm:CR-back-and-forth}
Let $\Omega$ and $\Delta$ be open semialgebraic subsets of $M$ and $N$ respectively, and let $\vp:\Omega\to\Delta$ be a dominant holomorphic semialgebraic mapping, with generic fibre dimension $\lambda$. Let $S$ be a semialgebraic subset of $N$ such that $\dim(S\cap\vp(\Omega\setminus\Omega^{(\lambda)})<\dim{S}$ and $\dim(\vp^{-1}(S)\setminus\Omega^{(\lambda)})<\dim\vp^{-1}(S)$.
\begin{itemize}
\item[(i)] If $S$ is a CR manifold of CR dimension $m$, then there is a closed semialgebraic set $T'$ in $M$, of dimension $\dim{T'}<\dim\vp^{-1}(S)$, and such that $\vp^{-1}(S)\setminus T'$ is a CR manifold of CR dimension
\[
\dim_{CR}\vp^{-1}(S)=\dim\vp^{-1}(S)-\dim{S}+m-\lambda\,.
\]
\item[(ii)] If $\vp^{-1}(S)$ is a CR manifold of CR dimension $m'$, then there is a closed semialgebraic set $T$ in $N$, of dimension $\dim{T}<\dim{S}$, and such that $S\setminus T$ is a CR manifold of CR dimension
\[
\dim_{CR}S=\dim{S}-\dim\vp^{-1}(S)+m'+\lambda\,.
\]
\end{itemize}
\end{theorem}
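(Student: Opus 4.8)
The plan is to establish both parts of Theorem~\ref{thm:CR-back-and-forth} by reducing each CR-dimension claim to a combination of (a) the holomorphic closure dimension relation for preimages (Proposition~\ref{prop:HC-preimage-bis}) and (b) the defining formula relating CR dimension to holomorphic closure dimension for a constant-$\dim_{HC}$ semialgebraic set (Proposition~\ref{prop:semialg-smooth-CR-mnfld}). The key arithmetic identity I expect to exploit is the following: if a $d$-dimensional semialgebraic set has constant holomorphic closure dimension $h$, then on the CR-regular part its CR dimension equals $d-h$. So the whole argument amounts to tracking how the three quantities $\dim$, $\dim_{HC}$, and $\dim_{CR}$ transform under $\vp$, and then invoking the stratification machinery to replace ``generic'' statements by ``outside a lower-dimensional closed semialgebraic set'' statements.

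\smallskip

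\textbf{Part (i).} Suppose $S$ is a CR manifold of CR dimension $m$. First I would note that, by Corollary~\ref{cor:CR-mnfld}, $S$ has constant holomorphic closure dimension, and by the formula of Proposition~\ref{prop:semialg-smooth-CR-mnfld} (applied in the direction $\dim_{CR}=\dim-\dim_{HC}$) this forces $\dim_{HC}S_\eta=\dim S-m$ at every $\eta\in S$. Next, I would invoke Proposition~\ref{prop:HC-preimage-bis}: on $\Omega^{(\lambda)}$ the holomorphic closure dimension of the preimage satisfies
\[
\dim_{HC}(\vp^{-1}(S))_\xi=\dim_{HC}S_{\vp(\xi)}+\lambda=(\dim S-m)+\lambda\,.
\]
The hypothesis $\dim(\vp^{-1}(S)\setminus\Omega^{(\lambda)})<\dim\vp^{-1}(S)$ guarantees that $\Omega^{(\lambda)}\cap\vp^{-1}(S)$ is dense of full dimension in $\vp^{-1}(S)$, so by upper semicontinuity of $\dim_{HC}$ this value $(\dim S-m+\lambda)$ is the constant (generic) holomorphic closure dimension of $\vp^{-1}(S)$. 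Finally, I would apply Proposition~\ref{prop:semialg-smooth-CR-mnfld} to $\vp^{-1}(S)$ (after restricting to the closed semialgebraic locus where $\dim_{HC}$ equals this constant value and $\dim_{HC}$ is locally maximal): there is a closed semialgebraic $T'$ of dimension $<\dim\vp^{-1}(S)$ with $\vp^{-1}(S)\setminus T'$ a CR manifold of CR dimension
\[
\dim\vp^{-1}(S)-\bigl((\dim S-m)+\lambda\bigr)=\dim\vp^{-1}(S)-\dim S+m-\lambda\,,
\]
as claimed.

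\smallskip

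\textbf{Part (ii)} runs symmetrically but in the reverse direction. Assuming $\vp^{-1}(S)$ is a CR manifold of CR dimension $m'$, Corollary~\ref{cor:CR-mnfld} gives it constant holomorphic closure dimension, and Proposition~\ref{prop:semialg-smooth-CR-mnfld} yields $\dim_{HC}(\vp^{-1}(S))_\xi=\dim\vp^{-1}(S)-m'$ everywhere. Now I would read Proposition~\ref{prop:HC-preimage-bis} backwards to extract $\dim_{HC}S_\eta$. The nowhere-density hypotheses $\dim(S\cap\vp(\Omega\setminus\Omega^{(\lambda)}))<\dim S$ and $\dim(\vp^{-1}(S)\setminus\Omega^{(\lambda)})<\dim\vp^{-1}(S)$ are precisely what is needed to ensure that over a dense full-dimensional part of $S$ there is a preimage point lying in $\Omega^{(\lambda)}$, so that the clean relation $\dim_{HC}(\vp^{-1}(S))_\xi=\dim_{HC}S_\eta+\lambda$ applies and gives
\[
\dim_{HC}S_\eta=(\dim\vp^{-1}(S)-m')-\lambda\,,
\]
a constant; again upper semicontinuity plus density propagates this to all of $S$ (on the generic locus). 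Applying Proposition~\ref{prop:semialg-smooth-CR-mnfld} once more, this time to $S$, produces the closed semialgebraic $T$ of dimension $<\dim S$ with $S\setminus T$ a CR manifold of the asserted CR dimension $\dim S-\dim_{HC}S=\dim S-\dim\vp^{-1}(S)+m'+\lambda$.

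\smallskip

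\textbf{The main obstacle} I anticipate is not the arithmetic but the careful bookkeeping of where the various constancy and density statements hold. Proposition~\ref{prop:HC-preimage-bis} only controls points of $\Omega^{(\lambda)}$, and Proposition~\ref{prop:semialg-smooth-CR-mnfld} requires a set of genuinely \emph{constant} holomorphic closure dimension; so the delicate step is to verify that, after discarding the lower-dimensional sets permitted by the hypotheses (the bad loci $\vp(\Omega\setminus\Omega^{(\lambda)})$ and $\Omega\setminus\Omega^{(\lambda)}$ and the non-maximal-$\dim_{HC}$ strata), the residual sets $S$ and $\vp^{-1}(S)$ really do have constant holomorphic closure dimension and that the constants match up through the $+\lambda$ shift. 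Here I would lean on upper semicontinuity of $\dim_{HC}$ (Theorem~\ref{thm:HC-stratification}, closedness of the $\mathcal{S}^d$) to argue that the generic value computed on $\Omega^{(\lambda)}$ dominates, and on the dimension hypotheses to argue it cannot be exceeded on a full-dimensional subset — thereby absorbing all exceptional behaviour into the closed lower-dimensional sets $T$ and $T'$.
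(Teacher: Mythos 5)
Your proposal is correct and follows essentially the same route as the paper: obtain the constant holomorphic closure dimension of the CR manifold side, transfer it through $\vp$ via Proposition~\ref{prop:HC-preimage-bis} on $\Omega^{(\lambda)}$, use the two dimension hypotheses to make the good locus full-dimensional, apply Proposition~\ref{prop:semialg-smooth-CR-mnfld} there, and absorb the exceptional loci into $T$ and $T'$. The only cosmetic difference is that the paper gets the identity $\dim_{HC}=\dim-\dim_{CR}$ on a CR manifold directly from \cite[Prop.~1.4]{AS}, whereas you recover it by inverting Proposition~\ref{prop:semialg-smooth-CR-mnfld}, which works but needs the (true) observation that the CR dimension of a manifold is determined on a dense subset.
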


\begin{proof}
Suppose first that $S$ is a CR manifold of CR dimension $m$. Then, by \cite[Prop.\,1.4]{AS}, $S$ has a constant holomorphic closure dimension $h=\dim{S}-m$. Therefore $\vp^{-1}(S)\cap\Omega^{(\lambda)}$, which is a semialgebraic set of dimension equal to $\dim\vp^{-1}(S)$, has constant holomorphic closure dimension $h+\lambda$, by Proposition~\ref{prop:HC-preimage-bis}. Hence, by Proposition~\ref{prop:semialg-smooth-CR-mnfld}, $\vp^{-1}(S)\cap\Omega^{(\lambda)}$ contains a closed semialgebraic subset $\widetilde{T}$, of dimension $\dim\widetilde{T}<\dim\vp^{-1}(S)$, such that $(\vp^{-1}(S)\cap\Omega^{(\lambda)})\setminus\widetilde{T}$ is a CR manifold, of CR dimension
\[
\dim_{CR}(\vp^{-1}(S)\cap\Omega^{(\lambda)})\setminus\widetilde{T}=\dim\vp^{-1}(S)-(h+\lambda)=\dim\vp^{-1}(S)-\dim{S}+m-\lambda\,.
\]
Now, the set $T':=\widetilde{T}\cup(\vp^{-1}(S)\setminus\Omega^{(\lambda)})$ has the required properties.

Next, suppose that $\vp^{-1}(S)$ is a CR manifold, of CR dimension $m'$. Then, by \cite[Prop.\,1.4]{AS} again, $\vp^{-1}(S)$ is of constant holomorphic closure dimension $h'=\dim\vp^{-1}(S)-m'$. Therefore $S\setminus\vp(\Omega\setminus\Omega^{(\lambda)})$ is a semialgebraic set of dimension equal to $\dim{S}$, and of constant holomorphic closure dimension $h'-\lambda$, by Proposition~\ref{prop:HC-preimage-bis}. Hence, by Proposition~\ref{prop:semialg-smooth-CR-mnfld}, $S\setminus\vp(\Omega\setminus\Omega^{(\lambda)})$ contains a closed semialgebraic subset $\widehat{T}$, of dimension $\dim\widehat{T}<\dim{S}$, such that $(S\setminus\vp(\Omega\setminus\Omega^{(\lambda)}))\setminus\widehat{T}$ is a CR manifold, of CR dimension
\[
\dim_{CR}(S\setminus\vp(\Omega\setminus\Omega^{(\lambda)}))\setminus\widehat{T}=\dim{S}-(h'-\lambda)=\dim{S}-\dim\vp^{-1}(S)+m'+\lambda\,.
\]
Then the set $T:=\widehat{T}\cup(S\cap\vp(\Omega\setminus\Omega^{(\lambda)}))$ has the required properties.
\qed
\end{proof}

We will say that a semialgebraic set $S$ in $M$ admits a \emph{holomorphic semialgebraic desingularization}, if there exists a $\C$-vector space $M'$, a holomorphic semialgebraic generically finite mapping $\sigma:M'\to M$, and a closed nowhere-dense semialgebraic subset $\Sigma$ of $M$, all such that $\sigma|_{M'\setminus\sigma^{-1}(\Sigma)}:M'\setminus\sigma^{-1}(\Sigma)\to M\setminus\Sigma$ is a biholomorphism, $S\cap\Sigma$ is nowhere-dense in $S$, the \emph{strict transform} $S':=\overline{\sigma^{-1}(S\setminus\Sigma)}$ of $S$ is an $\R$-analytic manifold, and $\sigma|_{S'}:S'\to S$ is a proper surjection.

\begin{proposition}
\label{prop:semialg-CR-desing}
Let $S$ be a semialgebraic set in $M$, which admits a holomorphic semialgebraic desingularization (with centre $\Sigma$). Suppose that the strict transform $S'$ of $S$ has constant holomorphic closure dimension (which is the case, e.g., when $S'$ is connected); say, $h$. Then $\dim_{HC}S_\xi=h$ for all $\xi\in S\setminus\Sigma$. Moreover, $S$ has a closed nowhere-dense semialgebraic subset $T$, such that $S\setminus T$ is a CR manifold of CR dimension $\dim{S}-h$.
\end{proposition}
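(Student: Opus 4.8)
The plan is to treat the two assertions in turn, deriving the first from the fact that $\dim_{HC}$ is a local biholomorphic invariant. Fix $\xi\in S\setminus\Sigma$ and let $\xi'$ be its unique preimage under the biholomorphism $\sigma|_{M'\setminus\sigma^{-1}(\Sigma)}$. Since $\sigma$ restricts to a biholomorphism between a neighbourhood of $\xi'$ and a neighbourhood of $\xi$, and $\xi'\notin\sigma^{-1}(\Sigma)$ (which is closed), I would first check that $\sigma$ carries the germ $S'_{\xi'}$ onto the germ at $\xi$ of $\sigma(S'\setminus\sigma^{-1}(\Sigma))=\overline{S}\setminus\Sigma$, i.e. onto $\overline{S}_\xi$; this uses the identity $\overline{A}\cap U=\overline{A\cap U}\cap U$ for $U$ open, applied to $A=\sigma^{-1}(S\setminus\Sigma)$ and $U=M'\setminus\sigma^{-1}(\Sigma)$. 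Invariance of the holomorphic closure dimension under the local biholomorphism $\sigma$, together with the elementary identity $\overline{S_\xi}^{HC}=\overline{\bS_\xi}^{HC}$, then gives $\dim_{HC}S_\xi=\dim_{HC}\overline{S}_\xi=\dim_{HC}S'_{\xi'}=h$, the last equality being the assumed constancy of the holomorphic closure dimension on $S'$. This settles the first assertion.

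For the second assertion I would reduce to Proposition~\ref{prop:semialg-smooth-CR-mnfld}. Put $d:=\dim_{\R}S$. The set $S\setminus\Sigma$ is semialgebraic, of constant holomorphic closure dimension $h$ by the first part, and dense in $S$ because $S\cap\Sigma$ is nowhere-dense in $S$; hence $\dim_{\R}(S\setminus\Sigma)=\dim_{\R}\overline{S\setminus\Sigma}=d$. Applying Proposition~\ref{prop:semialg-smooth-CR-mnfld} to $S\setminus\Sigma$ produces a closed semialgebraic subset $T_1$ of $S\setminus\Sigma$, with $\dim_{\R}T_1<d$, such that $(S\setminus\Sigma)\setminus T_1$ is a CR manifold of CR dimension $d-h$. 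I would then set $T:=(S\cap\Sigma)\cup T_1$, so that $S\setminus T=(S\setminus\Sigma)\setminus T_1$ is exactly the required CR manifold of CR dimension $\dim_{\R}S-h$.

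It remains to verify that $T$ is closed, semialgebraic and nowhere-dense in $S$, and this bookkeeping across the centre $\Sigma$ is where I expect the only real difficulty to lie. Semialgebraicity is clear. Closedness follows since $S\cap\Sigma$ is closed in $S$ and the frontier $\overline{T_1}^{\,S}\setminus T_1$ is contained in $S\cap\Sigma$ (as $T_1\subset S\setminus\Sigma$ and $\Sigma$ is closed). For nowhere-density the crucial point is the dimension estimate $\dim_{\R}(S\cap\Sigma)<d$: writing $S\cap\Sigma=\sigma(S'\cap\sigma^{-1}(\Sigma))$ and noting that $\sigma^{-1}(S\setminus\Sigma)$ is disjoint from $\sigma^{-1}(\Sigma)$, the set $S'\cap\sigma^{-1}(\Sigma)$ lies in the frontier $\overline{\sigma^{-1}(S\setminus\Sigma)}\setminus\sigma^{-1}(S\setminus\Sigma)=S'\setminus\sigma^{-1}(S\setminus\Sigma)$, whose dimension is strictly less than $\dim_{\R}\sigma^{-1}(S\setminus\Sigma)=d$ by the semialgebraic frontier inequality; since $\sigma$ does not raise dimension, $\dim_{\R}(S\cap\Sigma)<d$. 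Finally, because $S\setminus\Sigma$ is dense in the $\R$-analytic manifold $\overline{S}\setminus\Sigma=\sigma(S'\setminus\sigma^{-1}(\Sigma))$, it is pure $d$-dimensional, so the thin set $T_1$ is nowhere-dense in $S\setminus\Sigma$; hence $S\setminus T=(S\setminus\Sigma)\setminus T_1$ is dense in $S\setminus\Sigma$ and therefore in $S$, and $T$ is nowhere-dense. The delicate step is precisely ensuring that no part of $S$ is lost over $\Sigma$ and that the removed set $T$ stays thin there; the purity of $S\setminus\Sigma$, inherited from the manifold $S'$ via the biholomorphism, is what makes this work, and is exactly what the connectedness of $S'$ guarantees.
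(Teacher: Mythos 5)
Your argument is correct, and it runs parallel to the paper's proof but makes two genuinely different choices. For the first assertion the paper simply invokes Proposition~\ref{prop:HC-preimage-bis} (with generic fibre dimension $\lambda=0$), whereas you redo that special case by hand via biholomorphic invariance of the holomorphic closure and the identity $\overline{S_\xi}^{HC}=\overline{\bS_\xi}^{HC}$; this is more self-contained but proves nothing new. For the second assertion the paper applies Proposition~\ref{prop:semialg-smooth-CR-mnfld} \emph{upstairs} to the manifold $S'$ and pushes the resulting exceptional set forward, using properness of $\sigma|_{S'}$ to get closedness of $\sigma(T')$ and the biholomorphism off $\sigma^{-1}(\Sigma)$ to transport the CR structure; you instead apply it \emph{downstairs} to $S\setminus\Sigma$, which is legitimate because by the first part that set already has constant holomorphic closure dimension $h$ and dimension $\dim S$. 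Your route avoids the properness argument and the transport of CR structure, at the cost of the frontier bookkeeping you carry out ($\dim_\R(S\cap\Sigma)<\dim_\R S$ via the semialgebraic frontier inequality upstairs, closedness of $T_1\cup(S\cap\Sigma)$ in $S$); the paper's route avoids that bookkeeping by simply adjoining all of $\Sigma$ to $T$. One small caveat: in your last sentence you attribute the purity of dimension of $S\setminus\Sigma$ to the connectedness of $S'$, but connectedness is not a hypothesis of the proposition (only constancy of the holomorphic closure dimension on $S'$ is assumed). Purity of $S'$ is in fact implicitly needed for the nowhere-density of $T$ and for ``CR dimension $\dim S-h$'' to be a single well-defined number on all of $S\setminus T$ --- the paper's proof relies on it just as tacitly --- so this is a shared imprecision rather than a gap in your argument alone.
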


\begin{proof}
Let $\sigma:M'\to M$ be as above, and let $S'\subset M'$ denote the strict transform of $S$. Let $h$ denote the constant holomorphic closure dimension of $S'$. Then, by Proposition~\ref{prop:HC-preimage-bis}, $\dim_{HC}S_\eta=h$ for all $\eta\in S\setminus\Sigma$, since the generic fibre dimension of $\sigma$ is zero.

By Proposition~\ref{prop:semialg-smooth-CR-mnfld}, there is a semialgebraic set $T'$ closed and nowhere-dense in $S'$, such that $S'\setminus T'$ is a CR manifold of CR dimension $\dim{S'}-h$. Then $\sigma(S'\setminus(T'\cup\sigma^{-1}(\Sigma)))$ is a CR manifold of CR dimension $\dim{S'}-h=\dim{S}-h$, since a biholomorphism preserves the CR structure and dimensions. Moreover, $\sigma(T'\setminus\sigma^{-1}(\Sigma))$ is semialgebraic and nowhere-dense in $S$, and $\sigma(T')$ is a closed subset of $S$, by properness of $\sigma$. Hence $T:=\sigma(T')\cup\Sigma$ has the required properties.
\qed
\end{proof}
\medskip

\begin{example}
Let $R$ be an $\R$-algebraic subset of $\C^3$ given by equations
\[
x_3^3-x_1^2x_2x_3=x_1^4,\ \ y_1=0,
\]
where $z_j=x_j+iy_j$, $j=1,2,3$. Then $R$ has a holomorphic semialgebraic desingularization by a single blow-up of $\C^3$ with centre the $z_2$-axis. Indeed, under the mapping $\sigma:\C^3\to\C^3$ given by $z_1=u_1, z_2=u_2$, and $z_3=u_1u_3$, where $u_j=v_j+iw_j$ ($j\leq3$), $R$ has the strict transform
\[
R'=\{(u_1,u_2,u_3)\in\C^3:v_3^3-v_2v_3=v_1, w_1=0\}\,.
\]
Now, $R'$ is an $\R$-analytic submanifold of $\C^3$ of real dimension 4. By Corollary~\ref{cor:CR-mnfld}, $R'$ has constant holomorphic closure dimension. It is easy to see that $R'_0$ is contained in no germ of a $\C$-analytic hypersurface, and hence $\dim_{HC}R'_\xi=3$ for all $\xi\in R'$. Hence $R$ is generically (i.e., outside a nowhere-dense semialgebraic set) a CR manifold of CR dimension 1, by Proposition~\ref{prop:semialg-CR-desing}.
\end{example}


\begin{acknowledgements}
The authors would like to thank Rasul Shafikov for his many useful remarks and suggestions regarding the manuscript.
\end{acknowledgements}


\end{document}